\theoremstyle{plain}
\newtheorem{theorem}{Theorem}[section]
\newtheorem{proposition}[theorem]{Proposition}
\newtheorem{lemma}[theorem]{Lemma}
\newtheorem{corollary}[theorem]{Corollary}
\newtheorem{remark}[theorem]{Remark}
\newtheorem{definition}[theorem]{Definition}
\newtheorem{notation}[theorem]{Notation}
\newtheorem{example}[theorem]{Example}
\newtheorem{main theorem}[theorem]{Main Theorem}
\newtheorem{question}[theorem]{Question}
\newtheorem{convention}[theorem]{Convention}
\newcommand{\interior}{\operatorname{int}}
\newcommand{\ZZ}{\mathbb{Z}}
\newcommand{\QQ}{\mathbb{Q}}
\newcommand{\RR}{\mathbb{R}}
\newcommand{\HH}{\mathbb{H}}
\newcommand{\QQQ}{\hat{\mathbb{Q}}}
\newcommand{\RRR}{\hat{\mathbb{R}}}
\newcommand{\Conway}{\mbox{\boldmath$S$}^{2}}
\newcommand{\PConway}{\mbox{\boldmath$S$}}
\newcommand{\rtangle}[1]{(B^3,t({#1}))}
\newcommand{\DD}{\mathcal{D}}
\newcommand{\RGPC}[2]{\Gamma({#1};{#2})}
\newcommand{\RGPP}[1]{\hat\Gamma_{#1}}
\newcommand{\RGP}[1]{\Gamma_{#1}}
\newcommand{\Hecke}{\mbox{$G$}}
\newcommand{\orbs}{\mbox{\boldmath$S$}}
\newcommand{\svert}{\,|\,}
\newcommand{\llangle}{\langle\langle}
\newcommand{\rrangle}{\rangle\rangle}
\newcommand{\lp}{(\hskip -0.07cm (}
\newcommand{\rp}{)\hskip -0.07cm )}
\renewcommand\subsection{\@startsection{subsection}{2}{0mm}
    {-10.5dd plus-8pt minus-4pt}{10.5dd}
     {\normalsize\upshape}}
\begin{document}

\title{Homotopically equivalent simple loops
on 2-bridge spheres in Heckoid orbifolds for 2-bridge links (I)}

\author{Donghi Lee}
\address{Department of Mathematics\\
Pusan National University \\
San-30 Jangjeon-Dong, Geumjung-Gu, Pusan, 609-735, Republic of Korea}
\email{donghi@pusan.ac.kr}

\author{Makoto Sakuma}
\address{Department of Mathematics\\
Graduate School of Science\\
Hiroshima University\\
Higashi-Hiroshima, 739-8526, Japan}
\email{sakuma@math.sci.hiroshima-u.ac.jp}

\subjclass[2010]{Primary 20F06, 57M25\\
\indent {The first author was supported by Basic Science Research Program
through the National Research Foundation of Korea(NRF) funded
by the Ministry of Education, Science and Technology(2012R1A1A3009996).
The second author was supported
by JSPS Grants-in-Aid 22340013.}}

\begin{abstract}
In this paper and its sequel,
we give a necessary and sufficient condition
for two essential simple loops on a $2$-bridge sphere
in an even Heckoid orbifold for a $2$-bridge link
to be homotopic in the orbifold.
We also give a necessary and sufficient condition
for an essential simple loop on a $2$-bridge sphere
in an even Heckoid orbifold for a $2$-bridge link
to be peripheral or torsion in the orbifold.
This paper treats the case when
the $2$-bridge link is a $(2,p)$-torus link,
and its sequel will treat the remaining cases.
\end{abstract}
\maketitle


\section{Introduction}
Let $K(r)$ be the $2$-bridge link of slope $r \in \QQ$
and let $n$ be an integer or a half-integer greater than $1$.
In \cite{lee_sakuma_6}, following Riley's work~\cite{Riley2},
we introduced the {\it Heckoid group $\Hecke(r;n)$ of index $n$ for $K(r)$}
as the orbifold fundamental group
of the {\it Heckoid orbifold $\orbs(r;n)$ of index $n$ for $K(r)$}.
The classical Hecke groups introduced in \cite{Hecke}
are essentially the simplest Heckoid groups.
According to whether $n$ is an integer or a non-integral half-integer,
the Heckoid group $\Hecke(r;n)$ and the Heckoid orbifold $\orbs(r;n)$
are said to be {\it even} or {\it odd}.
The even Heckoid orbifold $\orbs(r;n)$
is the $3$-orbifold satisfying the following conditions (see Figure~\ref{fig.Hekoid-orbifold}):
\begin{enumerate}[\rm (i)]
\item
The underlying space $|\orbs(r;n)|$ is the exterior,
$E(K(r))=S^3-\interior N(K(r))$, of $K(r)$.
\item
The singular set is the lower tunnel of $K(r)$,
where the index of the singularity is $n$.
\end{enumerate}
For a description of odd Heckoid orbifolds,
see \cite[Proposition~5.3]{lee_sakuma_6}.

\begin{figure}[h]
\begin{center}
\includegraphics{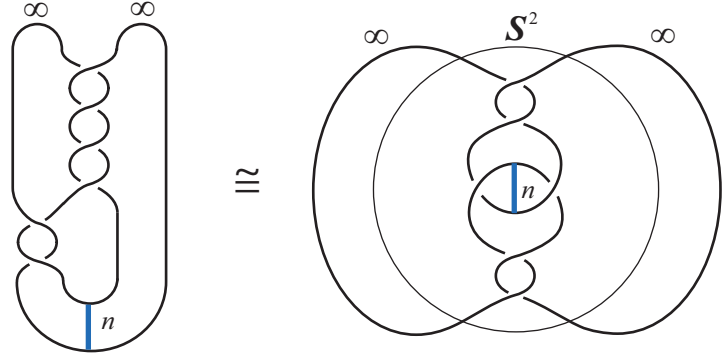}
\end{center}
\caption{
\label{fig.Hekoid-orbifold}
The even Heckoid orbifold $\orbs(r;n)$
of index $n$ for the $2$-bridge link $K(r)$.
Here $(S^3,K(r))=(B^3,t(\infty))\cup (B^3,t(r))$ is the $2$-bridge link
with $r=2/9=[4,2]$ (with a single component).
The rational tangles $(B^3,t(\infty))$ and $(B^3,t(r))$, respectively,
are the outside and the inside of the bridge sphere $\Conway$.
The underlying space of the orbifold is the complement
of an open regular neighborhood of
the subgraph consisting of those edges with weight $\infty$.
The singular set of the orbifold is the edge with weight $n$,
with cone angle $2\pi/n$.
}
\end{figure}

In \cite[Theorem~2.3]{lee_sakuma_6},
we gave a systematic construction
of upper-meridian-pair-preserving epimorphisms
from $2$-bridge link groups onto
Heckoid groups,
generalizing Riley's construction in \cite{Riley2}.
Furthermore, we proved, in \cite[Theorem~2.4]{lee_sakuma_7}, that
all upper-meridian-pair-preserving epimorphisms
from $2$-bridge link groups onto {\it even} Heckoid groups
are contained in those constructed in \cite[Theorem~2.3]{lee_sakuma_6}.
To prove this result, we determined
those essential simple loops on a $2$-bridge sphere
in an even Heckoid orbifold $\orbs(r;n)$
which are null-homotopic in $\orbs(r;n)$ (see \cite[Theorem~2.3]{lee_sakuma_7}).

The purpose of this paper and its sequel~\cite{lee_sakuma_10} is (i) to give
a necessary and sufficient condition
for two essential simple loops on a $2$-bridge sphere
in an even Heckoid orbifold $\orbs(r;n)$
to be homotopic in $\orbs(r;n)$,
and (ii) to give a necessary and sufficient condition
for an essential simple loop on a $2$-bridge sphere
in an even Heckoid orbifold $\orbs(r;n)$
to be peripheral or torsion in $\orbs(r;n)$.
In this paper, we treat the case when
$K(r)$ is a $(2,p)$-torus link,
and the sequel~\cite{lee_sakuma_10} will treat the remaining cases.
These results will be used, in our upcoming work,
to show the existence of a variation of McShane's identity
for even Heckoid orbifolds.
For an overview of this series of works, we refer the reader to
the research announcement~\cite{lee_sakuma_8}.

The remainder of this paper is organized as follows.
In Section~\ref{statements},
we describe the main results.
In Section~\ref{group_presentation}, we introduce
the upper presentation of an even Heckoid group,
and review basic facts concerning its single relator established in \cite{lee_sakuma}.
In Section~\ref{sec:small_cancellation_theory},
we apply small cancellation theory to conjugacy diagrams over
the upper presentations of even Heckoid groups.
In Section~\ref{sec:technical_lemmas},
we establish technical lemmas which will play essential roles in the succeeding sections.
Finally, Sections~\ref{sec:proof of main theorem(1) for the case when $r=1/m$}--\ref{sec:proof of main theorem(2)}
are devoted to the proof of Main Theorem~\ref{thm:conjugacy}.

\section{Main results}
\label{statements}

We quickly recall notation and basic facts introduced in \cite{lee_sakuma_6}.
The {\it Conway sphere} $\PConway$ is the 4-times punctured sphere
which is obtained as the quotient of $\RR^2-\ZZ^2$
by the group generated by the $\pi$-rotations around
the points in $\ZZ^2$.
For each $s \in \QQQ:=\QQ\cup\{\infty\}$,
let $\alpha_s$ be the simple loop in $\PConway$
obtained as the projection of a line in $\RR^2-\ZZ^2$
of slope $s$.
We call $s$ the {\it slope} of the simple loop $\alpha_s$.

For each $r\in \QQQ$,
the {\it $2$-bridge link $K(r)$ of slope $r$}
is the sum of the rational tangle
$\rtangle{\infty}$ of slope $\infty$ and
the rational tangle $\rtangle{r}$ of slope $r$.
Recall that $\partial(B^3-t(\infty))$ and $\partial(B^3-t(r))$
are identified with $\PConway$
so that $\alpha_{\infty}$ and $\alpha_r$
bound disks in $B^3-t(\infty)$ and $B^3-t(r)$, respectively.
By van-Kampen's theorem, the link group $G(K(r))=\pi_1(S^3-K(r))$ is obtained as follows:
\[
G(K(r))=\pi_1(S^3-K(r))
\cong \pi_1(\PConway)/ \llangle\alpha_{\infty},\alpha_r\rrangle
\cong \pi_1(B^3-t(\infty))/\llangle\alpha_r\rrangle.
\]
We call the image in $G(K(r))$ of the meridian pair of
$\pi_1(B^3-t(\infty))$ the {\it upper meridian pair}
(see \cite[Figure~3]{lee_sakuma}).

On the other hand, if $r$ is a rational number and $n\ge 2$ is an integer,
then by the description of the even Heckoid orbifold $\orbs(r;n)$ in the introduction,
the even Hekoid group $\Hecke(r;n)$,
which is defined as the orbifold fundamental group of $\orbs(r;n)$, is identified with
\[
\Hecke(r;n)
\cong\pi_1(\PConway)/ \llangle\alpha_{\infty},\alpha_r^n\rrangle
\cong \pi_1(B^3-t(\infty))/\llangle\alpha_r^n\rrangle.
\]
In particular,
the even Heckoid group $\Hecke(r;n)$ is a two-generator and one-relator group.
We also call the image in $\Hecke(r;n)$
of the meridian pair of $\pi_1(B^3-t(\infty))$
the {\it upper meridian pair}.

We are interested in the following naturally arising question.

\begin{question}
\label{question1}
{\rm
For $r$ a rational number and $n$ an integer or a half-integer
greater than $1$,
consider the Heckoid orbifold $\orbs(r;n)$ of index $n$ for the $2$-bridge link $K(r)$.
\begin{enumerate}[\indent \rm (1)]
\item
Which essential simple loop $\alpha_s$ on $\PConway$
is null-homotopic in $\orbs(r;n)$?
\item
For two distinct essential simple loops $\alpha_s$ and $\alpha_{s'}$ on $\PConway$,
when are they homotopic in $\orbs(r;n)$?
\item
Which essential simple loop $\alpha_s$ on $\PConway$
is peripheral or torsion in $\orbs(r;n)$?
\end{enumerate}
}
\end{question}

This is an analogy of a natural question for $2$-bridge links,
which has the origin in Minsky's question~\cite[Question~5.4]{Gordon},
and which was completely solved in the series of papers
\cite{lee_sakuma, lee_sakuma_2, lee_sakuma_3, lee_sakuma_4}
and applied in \cite{lee_sakuma_5}.
See \cite{lee_sakuma_0} for an overview of these works and
\cite{Ohshika-Sakuma} for a related work.

We note that
(1) a loop in the orbifold $\orbs(r;n)$ is {\it null-homotopic} in $\orbs(r;n)$
if and only if it determines the trivial conjugacy class of the Heckoid group $\Hecke(r;n)$, and
(2) two loops in $\orbs(r;n)$ are {\it homotopic}  in $\orbs(r;n)$
if and only if they determine the same conjugacy class in $\Hecke(r;n)$
(see \cite{BMP, Boileau-Porti} for the concept of homotopy in orbifolds).
We say that a loop in $\orbs(r;n)$ is {\it peripheral} if and only if it is homotopic to a loop
in the paring annulus naturally associated with $\orbs(r;n)$ (see \cite [Section~6]{lee_sakuma_6}),
i.e., it represents the conjugacy class of a power of a meridian of $\Hecke(r;n)$.
We also say that a loop in $\orbs(r;n)$ is {\it torsion} if it represents the conjugacy class
of a non-trivial torsion element of $\Hecke(r;n)$.
If we identify $\Hecke(r;n)$ with a Kleinian group generated by two parabolic transformations
(see \cite[Theorem~2.2]{lee_sakuma_6}),
then a loop $\orbs(r;n)$ is peripheral or torsion
if and only if it corresponds to a parabolic transformation or a non-trivial elliptic transformation
accordingly.
Thus Question~\ref{question1} can be interpreted as a question on the Heckoid group $\Hecke(r;n)$.

Let $\DD$ be the {\it Farey tessellation}
of the upper half plane $\HH^2$.
Then $\QQQ$ is identified with the set of the ideal vertices of $\DD$.
Let $\RGP{\infty}$ be the group of automorphisms of
$\DD$ generated by reflections in the edges of $\DD$ with an endpoint $\infty$.
For $r$ a rational number
and $n$ an integer or a half-integer greater than $1$,
let $C_r(2n)$ be the group of automorphisms of $\DD$ generated
by the parabolic transformation, centered on the vertex $r$,
by $2n$ units in the clockwise direction,
and let $\RGPC{r}{n}$ be the group generated by $\RGP{\infty}$ and $C_r(2n)$.
Suppose that $r$ is not an integer, i.e., $K(r)$ is not a trivial knot.
Then $\RGPC{r}{n}$ is the free product $\RGP{\infty}*C_r(2n)$
having a fundamental domain, $R$, shown in Figure~\ref{fig.fd_orbifold}.
Here, $R$ is obtained as the intersection of fundamental domains
for $\RGP{\infty}$ and $C_r(2n)$, and so
$R$ is bounded by the following two pairs of Farey edges:
\begin{enumerate}[\indent \rm (i)]
\item
the pair of adjacent Farey edges with an endpoint $\infty$
which cut off a region in $\bar\HH^2$ containing $r$, and
\item
a pair of Farey edges with an endpoint $r$
which cut off a region in $\bar\HH^2$ containing $\infty$ such that
one edge is the image of the other by a generator of $C_r(2n)$.
\end{enumerate}

Let $\bar{I}(r;n)$ be the union of
two closed intervals
in $\partial \HH^2=\RRR$
obtained as the intersection of the closure of $R$ and $\partial \HH^2$.
(In the special case when $r\equiv \pm1/p \pmod{\ZZ}$
for some integer $p\ge 2$,
one of the intervals may be degenerated to a single point.)
Note that there is a pair $\{r_1, r_2\}$
of boundary points of $\bar{I}(r;n)$
such that $r_2$ is the image of $r_1$ by a generator of $C_r(2n)$.
Set $I(r;n):=\bar{I}(r;n) -\{r_i\}$ with $i=1$ or $2$.
Note that
$I(r;n)$ is the disjoint union of a closed interval and a half-open interval,
except possibly for the special case when $r\equiv \pm 1/p \pmod{\ZZ}$.
Even in the exceptional case, we can choose $R$ so that
$I(r;n)$ satisfies this condition.

\begin{figure}[htbp]
\begin{center}
\includegraphics{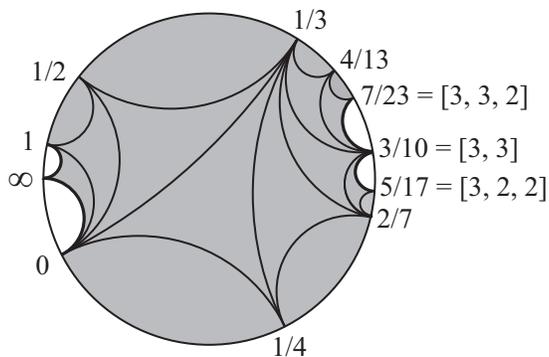}
\end{center}
\caption{\label{fig.fd_orbifold}
A fundamental domain of $\RGPC{r}{n}$ in the
Farey tessellation (the shaded domain) for $r=3/10=${\scriptsize $\cfrac{1}{3+\cfrac{1}{3}}$}\,$=:[3,3]$ and $n=2$.
In this case, $\bar{I}(r;n)=[0,5/17] \cup [7/23,1]$.}
\end{figure}

As a sufficient condition for each of Question~\ref{question1}(1) and (2),
we were able to obtain the following (cf. \cite[Theorem~2.4]{lee_sakuma_6}).

\begin{proposition}[{\cite[Theorem~2.2]{lee_sakuma_7}}]
\label{prop:fundametal_domain}
Suppose that $r$ is a non-integral rational number and
that $n$ is an integer or a half-integer greater than $1$.
Then, for any $s\in\QQQ$, there is a unique rational number
$s_0\in I(r;n) \cup \{\infty, r\}$
such that $s$ is contained in the $\RGPC{r}{n}$-orbit of $s_0$.
Moreover, the conjugacy classes
$\alpha_s$ and $\alpha_{s_0}$ in $\Hecke(r;n)$ are equal.
In particular, if $s_0=\infty$, then $\alpha_s$ is the trivial conjugacy class
in $\Hecke(r;n)$.
\end{proposition}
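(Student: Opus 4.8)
The plan is to establish two essentially independent assertions. \textbf{(A)} The set $I(r;n)\cup\{\infty,r\}$ is a complete set of representatives for the action of $\RGPC{r}{n}$ on the set of ideal vertices of $\DD$. \textbf{(B)} If $s$ and $s_0$ belong to the same $\RGPC{r}{n}$-orbit, then $\alpha_s=\alpha_{s_0}$ as conjugacy classes in $\Hecke(r;n)$. The final clause is then immediate: if $s_0=\infty$, then $\alpha_s=\alpha_\infty$, which is the trivial class by the very definition of $\Hecke(r;n)$. Part (A) is purely about the combinatorics of the $\RGPC{r}{n}$-action on the Farey tessellation, whereas Part (B) is where the three-dimensional topology of $\orbs(r;n)$ (equivalently, small cancellation over the upper presentation) intervenes.

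For Part (A) I would read the orbit structure directly off the fundamental domain $R$ described just before the statement. Since $\RGPC{r}{n}=\RGP{\infty}*C_r(2n)$ and $R$ is a fundamental domain for the induced action on $\DD$, every ideal vertex of $\DD$ is $\RGPC{r}{n}$-equivalent to an ideal vertex of $\closure{R}$; these are exactly $\infty$, $r$, and the rationals lying in the two closed intervals making up $\bar{I}(r;n)$. It then remains to decide which of these vertices lie in one orbit. The side-pairing transformations of $R$ are the two reflections in the Farey edges of type~(i) (elements of $\RGP{\infty}$, each fixing its own edge) together with a generator of $C_r(2n)$ carrying one Farey edge of type~(ii) onto the other. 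Using the free-product decomposition $\RGP{\infty}*C_r(2n)$, so that $R$ is precisely invariant, one checks that the only identification these side-pairings produce among ideal vertices of $\closure{R}$ is the single pair $\{r_1,r_2\}$; in particular $\infty$ and $r$, whose stabilizers are all of $\RGP{\infty}$ and all of $C_r(2n)$ respectively, are alone in their orbits, and distinct rationals in the interior of $\bar{I}(r;n)$ are never identified. Deleting one of $r_1,r_2$ from $\bar{I}(r;n)$ therefore leaves exactly one representative in each orbit. In the degenerate case $r\equiv\pm1/p\pmod{\ZZ}$, in which an interval of $\bar{I}(r;n)$ could collapse to a point, one first chooses $R$ (equivalently the generator of $C_r(2n)$) so as to avoid this, as already noted.

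For Part (B) it suffices, by induction on the length of a word in the generators, to prove that $\alpha_{g\cdot s}=\alpha_s$ in $\Hecke(r;n)$ for every $s\in\QQQ$ and every generator $g$ of $\RGPC{r}{n}$, of which there are three kinds. For a generator of $C_r(2n)$: it is realized on the Conway sphere $\PConway$ by the $n$-th power of the Dehn twist along $\alpha_r$ (one full twist realizing the parabolic about $r$ of displacement two Farey units, so the displacement-$2n$ generator becomes this $n$-th power). The induced automorphism of $\pi_1(\PConway)$ is compatible with the defining quotient, and on conjugacy classes it merely inserts copies of $\alpha_r^{\pm n}$; since $\alpha_r^n=1$ in $\Hecke(r;n)$, it descends to the identity, so $\alpha_{g\cdot s}=\alpha_s$. (Equivalently, after lifting a twist through the order-$n$ cone axis this $n$-th power extends to a self-homeomorphism of $\orbs(r;n)$ isotopic to the identity.) For a generator of the translation subgroup of $\RGP{\infty}$: it is realized by the Dehn twist along $\alpha_\infty$, and the same argument applies verbatim with $\alpha_\infty$—trivial in $\Hecke(r;n)$—in the role of $\alpha_r^n$. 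Finally, for one of the reflections generating $\RGP{\infty}$ (these, together with the translations, generate $\RGP{\infty}$)—say the reflection $\sigma$ given on slopes by $s\mapsto-s$: it is realized by a self-homeomorphism $h$ of the pair $\rtangle{\infty}$ with $h(\alpha_s)=\alpha_{-s}$ for all slopes $s$, inducing on the free group $\pi_1(B^3-t(\infty))$ the automorphism inverting the two meridians of the upper pair.

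The heart of the matter, and where I expect the main difficulty, is this last case: a priori $h_*$ does \emph{not} descend to $\Hecke(r;n)=\pi_1(B^3-t(\infty))/\llangle\alpha_r^n\rrangle$, since $h_*(\alpha_r)=\alpha_{-r}$ and $-r\neq r$ in general, and indeed $h$ does not extend over the orbifold $\orbs(r;n)$. The plan is to invoke the palindromic symmetry of the two-bridge relator word recorded in \cite{lee_sakuma}: the relator read backwards equals the relator with the two meridians interchanged, and, combined with the symmetry of $K(r)$ exchanging its two bridges, this forces $h_*(\alpha_s)$ to be conjugate to $\alpha_s^{\pm1}$ in $\pi_1(B^3-t(\infty))$ for every slope $s$; in particular $h_*$ preserves $\llangle\alpha_r^n\rrangle$ and so does descend. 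Hence $\alpha_{-s}=h_*(\alpha_s)$ is conjugate to $\alpha_s^{\pm1}$ in $\Hecke(r;n)$, which—since we deal with unoriented simple loops—gives $\alpha_{-s}=\alpha_s$, completing Part (B). A secondary technical point worth isolating is the precise-invariance verification in Part (A), ruling out unexpected coincidences among the boundary ideal vertices of $R$, together with the explicit choice of $R$ in the degenerate case $r\equiv\pm1/p$.
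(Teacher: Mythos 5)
This paper does not actually prove Proposition~\ref{prop:fundametal_domain}: it is imported wholesale from \cite[Theorem~2.2]{lee_sakuma_7}, so there is no in-paper proof to measure you against, and I can only compare your reconstruction with the route of the companion papers, which it in fact resembles. Your Part~(A) is the intended fundamental-domain argument for $\RGPC{r}{n}=\RGP{\infty}*C_r(2n)$, with the precise-invariance check at the ideal points $r_1,r_2,\infty,r$ (which you yourself flag) the only detail left to supply; and in Part~(B) your treatment of the $C_r(2n)$-generator --- the $n$-th power of the twist along $\alpha_r$ inserts conjugates of $\alpha_r^{\pm n}$, which die in $\Hecke(r;n)$ --- and of the even translations of $\RGP{\infty}$ via the twist along $\alpha_\infty$ is exactly the standard mechanism.

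The reflection case is the one thin spot, but the statement you need there is true and in fact stronger than what you use: for $s\mapsto -s$ one already has $u_{-s}$ conjugate to $u_s^{\pm1}$ in the free group $F(a,b)=\pi_1(B^3-t(\infty))$. Indeed, since $\lfloor -iq/p\rfloor=-\lfloor iq/p\rfloor-1$, Lemma~\ref{presentation} gives $\hat u_{-q/p}=\psi(\hat u_{q/p})$ for $\psi\colon a\mapsto a^{-1},\ b\mapsto b^{-1}$, whence $u_{-q/p}$ is conjugate to $\psi(u_{q/p})^{-1}$; and the strong inversion of $\rtangle{\infty}$ realizes $\psi$ up to inner automorphism while preserving each unoriented $\alpha_s$, so $\psi(u_s)$ is conjugate to $u_s^{-1}$. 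This free-group conjugacy is the mechanism behind \cite[Proposition~4.6]{Ohtsuki-Riley-Sakuma}, quoted in Section~\ref{sec:technical_lemmas}, and since conjugacy in $F(a,b)$ passes to every quotient, your detour through showing that $h_*$ descends to $\Hecke(r;n)$ is unnecessary; conversely, your one-sentence appeal to ``palindromic symmetry plus the bridge-exchange symmetry'' is not yet a proof and should be replaced by an explicit word-level lemma such as the one just sketched. With that lemma and the Part~(A) uniqueness details written out, your argument closes; it differs from the cited source mainly in handling $\RGP{\infty}$ generator by generator rather than by a single ``homotopic already in the upper tangle complement'' statement, which is a legitimate, if slightly longer, alternative.
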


Furthermore, for even Heckoid groups, we proved that
the converse of the last assertion of Proposition~\ref{prop:fundametal_domain}
is also true, implying that
the sufficient condition for Question~\ref{question1}(1) is actually a necessary and sufficient condition.
This made us possible to describe all upper-meridian-pair-preserving
epimorphisms from $2$-bridge link groups onto even Heckoid groups.

\begin{proposition}[{\cite[Theorem~2.3]{lee_sakuma_7}}]
\label{prop:reformulation}
Suppose that $r$ is a non-integral rational number and
that $n$ is an integer greater than $1$.
Then $\alpha_s$ represents the trivial element of $\Hecke(r;n)$
if and only if $s$ belongs to the $\RGPC{r}{n}$-orbit of $\infty$.
In other words, if $s\in I(r;n) \cup \{r\}$, then
$\alpha_s$ does not represent the trivial element of $\Hecke(r;n)$.
\end{proposition}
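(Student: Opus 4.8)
The plan is to combine Proposition~\ref{prop:fundametal_domain} with a small-cancellation analysis of the upper presentation $\Hecke(r;n)=\langle a,b\mid U^n\rangle$, where $a,b$ is the upper meridian pair and $U$ is the cyclically reduced word over $\{a^{\pm1},b^{\pm1}\}$ representing $\alpha_r$ in the free group $\pi_1(B^3-t(\infty))=\langle a,b\rangle$. By Proposition~\ref{prop:fundametal_domain}, for every $s\in\QQQ$ the conjugacy class $\alpha_s$ equals $\alpha_{s_0}$ for the unique $s_0\in I(r;n)\cup\{\infty,r\}$ lying in the $\RGPC{r}{n}$-orbit of $s$, and $\alpha_\infty$ is trivial; so it suffices to show that $\alpha_s\ne 1$ whenever $s$ itself belongs to $I(r;n)\cup\{r\}$. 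I would dispose of the case $s=r$ by classical one-relator group theory: $\alpha_r$ is represented by $U$, and $U$ is not a proper power in $\langle a,b\rangle$ (otherwise the $2$-bridge link group $G(K(r))=\langle a,b\mid U\rangle$ would contain torsion, contradicting asphericity of $2$-bridge link complements), so the Karrass--Magnus--Solitar description of torsion in one-relator groups gives that $U$ has order exactly $n$ in $\langle a,b\mid U^n\rangle$; hence $\alpha_r$ is nontrivial --- in fact a torsion class, which already accounts for part of Question~\ref{question1}(3).

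For $s\in I(r;n)$, the argument would run by contradiction: assuming $\alpha_s=1$, the cyclically reduced word $w_s$ representing $\alpha_s$ --- which is nonempty because $s\ne\infty$ --- bounds a reduced van Kampen diagram $M$ over $\langle a,b\mid U^n\rangle$. The symmetrized closure of $\{U^n\}$ consists exactly of the cyclic conjugates of $U^{\pm n}$, so a piece is a common prefix of two distinct such words, and the structure of $U$ established in \cite{lee_sakuma} (reviewed in Section~\ref{group_presentation}) keeps pieces short relative to $|U^n|=n|U|$. The decisive point is to prove that the hypothesis $s\in I(r;n)$ --- i.e.\ that $s$ lies in the prescribed fundamental domain $R$ of $\RGPC{r}{n}=\RGP{\infty}*C_r(2n)$ in the Farey tessellation $\DD$ --- translates, through the continued-fraction dictionary of \cite{lee_sakuma}, into the statement that no cyclic subword of $w_s$ coincides with a subword of any member of the symmetrized set occupying more than half of it. Granting this, the diagram analysis --- the analogue of Greendlinger's lemma for these presentations --- produces no $2$-cell of $M$ meeting $\partial M$ in more than half its boundary, so $M$ has no $2$-cells at all; hence $w_s=1$ in $\langle a,b\rangle$, contradicting that $w_s$ is cyclically reduced and nonempty.

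The step I expect to be the main obstacle is precisely this translation: showing that the Farey-geometric description of $R$ --- bounded by the two adjacent edges at $\infty$ cutting off $r$, together with a pair of edges at $r$ interchanged by a generator of $C_r(2n)$ --- is exactly what rules out every long common subword of $w_s$ with $U^n$ and its cyclic conjugates. This requires the combinatorial analysis of continued fractions in \cite{lee_sakuma}, controlling how the sequence of maximal $a^{\pm}$- and $b^{\pm}$-subwords of $w_s$ can overlap that of $U^{n}$, and it must be carried through even for small indices such as $n=2$, where the basic $C'(1/6)$ metric condition may fail and one has to exploit the finer small-cancellation structure of $U$, and in the degenerate cases $r\equiv\pm1/p\pmod{\ZZ}$ in which one of the two intervals comprising $\bar{I}(r;n)$ collapses to a point --- which is exactly the $(2,p)$-torus-link case treated in the present paper. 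Once this dictionary between the geometry of $R$ and the admissible subwords of $w_s$ is in place, the diagram argument is routine.
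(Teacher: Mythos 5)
Your outline follows the same road as the actual proof: the paper imports this proposition from \cite[Theorem~2.3]{lee_sakuma_7}, and the argument there is precisely the machinery reviewed in Sections~\ref{group_presentation}--\ref{sec:technical_lemmas} (the upper presentation, the conditions $C(4n)$ and $T(4)$, and continued-fraction control of $CS(s)$); your disposal of the case $s=r$ via the Karrass--Magnus--Solitar torsion theorem is also correct. The genuine gap is your ``decisive point''. It is false that $s\in I(r;n)$ rules out common subwords of $u_s$ and the relator occupying more than half of the relator. Concretely, take $r=1/2$ and $n=2$, so the relator $u_{1/2}^2=(aba^{-1}b^{-1})^2$ has length $8$, and take $s=3/8=[2,1,2]\in I_1(1/2;2)=[0,2/5)$. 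Then $u_{3/8}\equiv aba\,b^{-1}a^{-1}b^{-1}\,ab\,a^{-1}b^{-1}a^{-1}\,bab\,a^{-1}b^{-1}$ with $CS(3/8)=\lp 3,3,2,3,3,2\rp$, and its subword $a^{-1}b^{-1}aba^{-1}b^{-1}$ (letters $5$ through $10$) is also a subword of $(u_{1/2}^{2})$, of length $6>4$. So the metric, Greendlinger-style conclusion you want (``no face meets $\partial M$ in more than half of its boundary, hence $M$ has no faces'') cannot be reached along this route: the half-relator dictionary simply does not hold on $I(r;n)$, and these presentations are only known to satisfy the non-metric conditions $C(4n)$ and $T(4)$.

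What the hypothesis $s\in I(r;n)$ actually delivers---and all that is needed---is much weaker: $CS(s)$ contains neither $((2n-2)\langle m\rangle)$ (when $k=1$) nor $((2n-1)\langle S_1,S_2\rangle)$, $((2n-1)\langle S_2,S_1\rangle)$ (when $k\ge 2$) as a subsequence; this is \cite[Proposition~5.1]{lee_sakuma_7}, of which Lemma~\ref{lem:connection} is the $r=1/p$ instance. By Lemma~\ref{lem:maximal_piece2}(3), this prevents $(u_s)$ from containing any subword of $(u_r^{\pm n})$ that is a product of $4n-1$ pieces but of no fewer. On the diagram side one does not invoke a metric Greendlinger lemma but the curvature count for $[4,4n]$-maps (Lemma~\ref{lem:inequality} and Proposition~\ref{prop:key}, together with the absence of degree-$1$ and degree-$3$ boundary vertices, which comes from $(u_s)$ and $(u_r^n)$ being alternating): a reduced disk diagram with boundary label $u_s$ would be forced to carry $4n-2$ consecutive degree-$2$ boundary vertices, hence, by Convention~\ref{convention}(3), a boundary subword that is a product of $4n-1$ pieces and of no fewer---exactly what was just excluded. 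So the skeleton of your plan (reduction via Proposition~\ref{prop:fundametal_domain}, the torsion theorem for $s=r$, a reduced van Kampen diagram for $s\in I(r;n)$) is sound, but the pivotal half-relator claim must be replaced by this piece-counting criterion; as written, that step fails.
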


The purpose of the present paper and its sequel~\cite{lee_sakuma_10}
is to give the following complete solution to each of Question~\ref{question1}(2) and (3)
for even Heckoid orbifolds..

\begin{main theorem}
\label{thm:conjugacy}
Suppose that $r$ is a non-integral rational number and
that $n$ is an integer greater than $1$.
Then the following hold.
\begin{enumerate}[\indent \rm (1)]
\item The simple loops $\{\alpha_s \svert s\in I(r;n)\}$ represent
mutually distinct conjugacy classes in $\Hecke(r;n)$.

\item There is no rational number $s \in I(r;n)$
for which $\alpha_s$ is peripheral in $\Hecke(r;n)$.

\item There is no rational number $s \in I(r;n)$
for which $\alpha_s$ is torsion in $\Hecke(r;n)$.
\end{enumerate}
\end{main theorem}

Note that (1) together with (3) implies that the simple loops
$\{\alpha_s \svert s\in I(r;n) \cup \{r\}\}$
are not mutually homotopic in $\orbs(r;n)$,
because $\alpha_{r}$ is a nontrivial torsion element in $\Hecke(r;n)$.
Thus, together with Proposition~\ref{prop:reformulation},
the above theorem gives a complete answer to Question~\ref{question1}
for even Heckoid orbifolds.

In this paper, we give a proof of Main Theorem~\ref{thm:conjugacy}
when $K(r)$ is a torus link,
i.e., $r\equiv \pm1/p \pmod{1}$ for some integer $p\ge 2$.
As in \cite{lee_sakuma_7}, the key tool used in the proofs is small cancellation theory,
applied to two-generator and one-relator presentations,
so-called the upper presentations, of even Heckoid groups.

At the end of this section, we point out the following fact,
which can be easily proved (cf. \cite[Lemma~4.1]{lee_sakuma_6}).
By the lemma, we may assume $0< r\le 1/2$.

\begin{lemma}
\label{lemma:natural-homeomprphisms}
For any rational number $r$ and an integer $n\ge 2$,
the following hold.
\begin{enumerate}[\indent \rm (1)]
\item
There is an {\rm (}orientation-preserving{\rm )} orbifold-homeomorphism $f$
from $\orbs(r;n)$ to $\orbs(r+1;n)$
which maps the $2$-bridge sphere of $\orbs(r;n)$ to
that of $\orbs(r+1;n)$.
Moreover, the restriction of $f$ to the $2$-bridge sphere
maps the simple loop $\alpha_s$ to $\alpha_{s+1}$
for any $s\in\QQQ$.
\item
There is an {\rm (}orientation-preserving{\rm )} orbifold-homeomorphism $f$
from $\orbs(r;n)$ to $\orbs(-r;n)$
which maps the $2$-bridge sphere of $\orbs(r;n)$ to
that of $\orbs(-r;n)$.
Moreover, the restriction of $f$ to the $2$-bridge sphere
maps the simple loop $\alpha_s$ to $\alpha_{-s}$
for any $s\in\QQQ$.
\end{enumerate}
\end{lemma}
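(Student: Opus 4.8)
The plan is to prove Lemma~\ref{lemma:natural-homeomprphisms} by exhibiting the required homeomorphisms at the level of the underlying $2$-bridge sphere pair and then extending them over the tangle decompositions, tracking the singular (lower tunnel) structure throughout. Recall that $\PConway$ is the quotient of $\RR^2-\ZZ^2$ by the $(\ZZ^2)$-generated group of $\pi$-rotations; any element of $\mathrm{GL}(2,\ZZ)$ acting on $\RR^2$ descends to a homeomorphism of $\PConway$, and under such a map the simple loop $\alpha_s$ of slope $s$ is sent to the simple loop of the transformed slope. Thus for (1) I would take the linear map $A=\bigl(\begin{smallmatrix}1&0\\1&1\end{smallmatrix}\bigr)$, which sends a line of slope $s$ to a line of slope $s+1$, hence $\alpha_s\mapsto\alpha_{s+1}$ and in particular $\alpha_\infty\mapsto\alpha_\infty$, $\alpha_r\mapsto\alpha_{r+1}$; for (2) I would take $B=\bigl(\begin{smallmatrix}-1&0\\0&1\end{smallmatrix}\bigr)$ (or any orientation-reversing reflection fixing the slope-$\infty$ direction), which sends slope $s$ to $-s$, hence $\alpha_\infty\mapsto\alpha_\infty$, $\alpha_r\mapsto\alpha_{-r}$. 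One must check the resulting self-homeomorphism of $\PConway$ is orientation-preserving on the $3$-manifold level even when $\det$ is negative on $\RR^2$, because the $\pi$-rotation quotient already absorbs a sign; this is exactly the bookkeeping in \cite[Lemma~4.1]{lee_sakuma_6}, so I would cite that for the orientation statement.

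Next I would extend the $\PConway$-homeomorphism to an orbifold-homeomorphism of the ambient $3$-orbifolds. Since $\alpha_\infty$ is preserved in both cases, the homeomorphism of $\PConway$ extends over the outer rational tangle $\rtangle{\infty}$ (it carries the disk bounded by $\alpha_\infty$ to itself); and since $\alpha_r\mapsto\alpha_{r+1}$ (resp.\ $\alpha_{-r}$), it carries the inner rational tangle $\rtangle{r}$ to $\rtangle{r+1}$ (resp.\ $\rtangle{-r}$), because a rational tangle is determined up to homeomorphism of the pair by the slope of its meridian disk. Gluing these two extensions along $\PConway$ produces a homeomorphism of pairs $(S^3,K(r))\to(S^3,K(r+1))$ (resp.\ $(S^3,K(-r))$) restricting to the prescribed map on the $2$-bridge sphere. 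Restricting to link exteriors gives the homeomorphism of underlying spaces $|\orbs(r;n)|\to|\orbs(r\pm1;n)|$.

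The one genuinely orbifold-theoretic point is that this homeomorphism must match up the singular sets, i.e., carry the lower tunnel of $K(r)$ (with cone index $n$) to the lower tunnel of $K(r+1)$ (resp.\ $K(-r)$) with the same index. Here I would invoke that the lower tunnel is an \emph{unknotting tunnel} canonically associated to the $2$-bridge sphere decomposition --- concretely it is the arc dual to the rational tangle structure, and the linear-algebraic maps $A,B$ above are modeled on the standard picture of a $2$-bridge link together with its tunnel. Since the index $n$ is unchanged and the cone-angle data is topological, the map is an orbifold-homeomorphism; composing with the identity on the transverse $\ZZ/n$ structure finishes the proof. I expect \textbf{this last step --- verifying the tunnel is carried correctly --- to be the main (though still routine) obstacle}, since it requires being careful about which of the two unknotting tunnels of a $2$-bridge link is the ``lower'' one and checking that the normalizations $r\mapsto r+1$ and $r\mapsto -r$ respect that choice; this is precisely the content already worked out for Heckoid orbifolds in \cite[Lemma~4.1]{lee_sakuma_6}, so in the write-up I would phrase the proof as a direct adaptation of that argument and refer the reader there for the pictures.
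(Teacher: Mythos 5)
Your overall construction is the right one and is exactly what the reference the paper itself points to does: descend elements of $\mathrm{GL}(2,\ZZ)$ to $\PConway$ (so that slopes transform by the corresponding fractional-linear action), extend over the two rational tangles because the meridian slopes $\infty$ and $r$ go to $\infty$ and $r+1$ (resp.\ $-r$), and carry the core arc of the inner tangle (the lower tunnel) along with its index $n$. The paper gives no independent argument for this lemma --- it records it as an easy fact ``(cf.\ \cite[Lemma~4.1]{lee_sakuma_6})'' --- and for part (1) your proof is complete as it stands, since $\bigl(\begin{smallmatrix}1&0\\1&1\end{smallmatrix}\bigr)$ has determinant $+1$ and everything is orientation-preserving.

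The genuine flaw is in your orientation bookkeeping for part (2). The $\pi$-rotations generating the deck group are orientation-preserving, so the quotient does \emph{not} ``absorb a sign'': the matrix $\bigl(\begin{smallmatrix}-1&0\\0&1\end{smallmatrix}\bigr)$ descends to an orientation-\emph{reversing} homeomorphism of $\PConway$. This cannot be fixed by a cleverer choice of map: any homeomorphism of $\PConway$ inducing $s\mapsto -s$ acts on slopes through a determinant $-1$ element of $\mathrm{PGL}(2,\ZZ)$, hence is orientation-reversing on $\PConway$; and since an orbifold-homeomorphism $\orbs(r;n)\to\orbs(-r;n)$ must send the singular (inner-tangle) side of the bridge sphere to the singular side, orientation-reversal on the sphere forces orientation-reversal of the ambient orbifold. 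Indeed an orientation-preserving homeomorphism $E(K(r))\to E(K(-r))$ cannot exist in general --- for $r=1/3$ it would contradict the chirality of the trefoil (e.g.\ via Gordon--Luecke). So the natural map in (2) is orientation-reversing; you should not cite \cite[Lemma~4.1]{lee_sakuma_6} to paper over this, but rather state (2) with the correct orientation behaviour (or drop the parenthetical), noting that this is harmless for how the lemma is used: only the induced correspondence $\alpha_s\leftrightarrow\alpha_{-s}$ of conjugacy classes is needed to reduce to $0<r\le 1/2$.
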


\section{Upper presentations of even Heckoid groups and review of basic facts from \cite{lee_sakuma}}
\label{group_presentation}

In this section, we introduce the
upper presentation of an even Heckoid group $\Hecke(r;n)$,
where $r$ is a rational number and $n \ge 2$ is an integer,
and review basic facts established in \cite{lee_sakuma}
concerning it.
These facts are to be used throughout this paper and its sequel~\cite{lee_sakuma_8}.

In order to describe the upper presentations of even Heckoid groups,
recall that
\[
\Hecke(r;n)\cong
\pi_1(\PConway)/ \llangle\alpha_{\infty},\alpha_r^n\rrangle
\cong
\pi_1(B^3-t(\infty))/ \llangle\alpha_r^n\rrangle.
\]
Let $\{a,b\}$ be the standard meridian generator pair of $\pi_1(B^3-t(\infty), x_0)$
as described in \cite[Section~3]{lee_sakuma}
(see also \cite[Section~5]{lee_sakuma_0}).
Then $\pi_1(B^3-t(\infty))$ is identified with the free group $F(a,b)$.
Obtain a word $u_r\in F(a,b)\cong\pi_1(B^3-t(\infty))$
which is represented by the simple loop $\alpha_r$.
It then follows that
\[
\Hecke(r;n) \cong\pi_1(B^3-t(\infty))/\llangle \alpha_r^n\rrangle
\cong \langle a, b \svert u_r^n \rangle.
\]
This two-generator and one-relator presentation
is called the {\it upper presentation} of the even Heckoid group $\Hecke(r;n)$,
which is used throughout the remainder of this paper.
It is known by \cite[Proposition~1]{Riley}
that there is a nice formula to find $u_r$ as follows.
(For a geometric description, see \cite[Section~5]{lee_sakuma_0}.)

\begin{lemma}
\label{presentation}
Let $p$ and $q$ be relatively prime integers
such that $p \ge 1$.
For $1 \le i \le p-1$, let
\[\epsilon_i = (-1)^{\lfloor iq/p \rfloor},\]
where $\lfloor x \rfloor$ is the greatest integer not exceeding $x$.
\begin{enumerate}[\indent \rm (1)]
\item If $p$ is odd, then
\[
u_{q/p}=a\hat{u}_{q/p}b^{(-1)^q}\hat{u}_{q/p}^{-1},\]
where
$\hat{u}_{q/p} = b^{\epsilon_1} a^{\epsilon_2} \cdots b^{\epsilon_{p-2}} a^{\epsilon_{p-1}}$.
\item If $p$ is even, then
\[
u_{q/p}=a\hat{u}_{q/p}a^{-1}\hat{u}_{q/p}^{-1},\]
where
$\hat{u}_{q/p} = b^{\epsilon_1} a^{\epsilon_2} \cdots a^{\epsilon_{p-2}} b^{\epsilon_{p-1}}$.
\end{enumerate}
\end{lemma}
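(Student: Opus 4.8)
The plan is to reconstruct Riley's computation (\cite[Proposition~1]{Riley}; see also the geometric account in \cite[Section~5]{lee_sakuma_0}) by reading off directly, as an element of $F(a,b)\cong\pi_1(B^3-t(\infty))$, the word carried by the simple loop $\alpha_{q/p}$. First I would use the model of the Conway sphere as $(\RR^2-\ZZ^2)/\Gamma$, where $\Gamma$ is the group generated by the $\pi$-rotations about the points of $\ZZ^2$ (so $\Gamma\cong 2\ZZ^2\rtimes\ZZ/2$), and fix a rectangular fundamental domain whose four corner cone points are the images of $\ZZ^2$, that is, the four punctures. Under the epimorphism $\pi_1(\PConway)\twoheadrightarrow F(a,b)$, whose kernel is normally generated by $\alpha_\infty$, the free generators $a$ and $b$ are represented by meridians of the two strands of $t(\infty)$, i.e.\ by small loops encircling two of the punctures (the upper meridian pair). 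Everything then reduces to expressing, in $F(a,b)$, the class of the simple loop $\alpha_{q/p}$, which lifts to the family of lines of slope $q/p$ in $\RR^2-\ZZ^2$.

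Next I would traverse one such line $L$ (disjoint from $\ZZ^2$), letting $x$ run from $0$ out to $p$ and back, so as to model the closed loop. On the outward trip, at each crossing of an intermediate vertical grid line $x=i$, $1\le i\le p-1$, I record a generator, while at each crossing of a horizontal grid line I apply the $\pi$-rotation of $\Gamma$ about the relevant lattice point; this last operation reverses the local orientation transverse to $L$ and hence toggles the sign of the next recorded generator. Since $\lfloor iq/p\rfloor$ is exactly the number of horizontal lines met for $0<x<i$ — and $iq/p\notin\ZZ$ for such $i$ because $\gcd(p,q)=1$ — the generator recorded at $x=i$ carries the exponent $\epsilon_i=(-1)^{\lfloor iq/p\rfloor}$, and since the $\Gamma$-class of the puncture in the $i$-th column depends only on the parity of $i$, this generator is $b$ for $i$ odd and $a$ for $i$ even; thus the outward letters assemble into $\hat u_{q/p}=b^{\epsilon_1}a^{\epsilon_2}\cdots$, ending in $a^{\epsilon_{p-1}}$ if $p$ is odd and in $b^{\epsilon_{p-1}}$ if $p$ is even. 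The closing-up of $L$ at the puncture over $x\equiv p$ then supplies the central letter — a power of $b$, namely $b^{(-1)^q}$, when $p$ is odd, and $a^{-1}$ when $p$ is even (in which case $q$ is odd) — the return trip retraces the intermediate crossings as $\hat u_{q/p}^{-1}$, and the closing-up at the puncture over $x\equiv 0$ supplies the leading letter $a$, producing the palindromic word $a\hat u_{q/p}b^{(-1)^q}\hat u_{q/p}^{-1}$ when $p$ is odd and $a\hat u_{q/p}a^{-1}\hat u_{q/p}^{-1}$ when $p$ is even.

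Thus the division into the two cases is forced by the parity of $p$, which governs the $\Gamma$-class of the puncture over the closing point. The base case $p=1$ (where $\hat u_{q/p}$ is empty and $u_{q/1}=ab^{(-1)^q}$) is immediate, and one could first normalize to $0<q/p\le 1/2$ via Lemma~\ref{lemma:natural-homeomprphisms}, although the recipe above applies uniformly to every coprime pair with $p\ge1$. The hard part will be the sign and base-point bookkeeping: one must pin down the orientation conventions for $a$, $b$ and $\alpha_{q/p}$, verify that the $\pi$-rotations of $\Gamma$ act on a tubular neighbourhood of $L$ exactly as the claimed local orientation reversal — with the correct behaviour when $q<0$, where the agreement holds only up to inversion and conjugacy of the resulting word — and confirm that the closing-up contributes precisely the indicated factors, so that the output is literally the stated word and not merely a conjugate or an inverse of it. This careful bookkeeping is exactly the content of Riley's proof, which I would carry out (or invoke) in that form.
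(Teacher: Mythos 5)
Your outline is correct and is essentially the paper's own treatment: the paper offers no independent proof of Lemma~\ref{presentation} but simply quotes \cite[Proposition~1]{Riley}, with the geometric reading of the slope-$q/p$ line (letters at vertical lattice crossings, signs toggled at horizontal crossings) referenced to \cite[Section~5]{lee_sakuma_0} and echoed in Remark~\ref{remark:recovering the slope}. Your reconstruction of that reading, with the final orientation/base-point bookkeeping delegated to Riley, therefore matches the paper's approach.
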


\begin{remark}
\label{rem:epsilon}
{\rm
For $r=0/1, 1/1$ and $1/0$, we have $u_{0/1}=ab$, $u_{1/1}=ab^{-1}$ and $u_{1/0}=1$.
}
\end{remark}

Now we define the sequences $S(r)$ and $T(r)$ and the cyclic sequences
$CS(r)$ and $CT(r)$, all of which are read from $u_r$ defined in Lemma~\ref{presentation},
and review several important properties of these sequences
from \cite{lee_sakuma} so that we can adopt small cancellation theory.
To this end we fix some definitions and notation.
Let $X$ be a set.
By a {\it word} in $X$, we mean a finite sequence
$x_1^{\epsilon_1}x_2^{\epsilon_2}\cdots x_n^{\epsilon_n}$
where $x_i\in X$ and $\epsilon_i=\pm1$.
Here we call $x_i^{\epsilon_i}$ the {\it $i$-th letter} of the word.
For two words $u, v$ in $X$, by
$u \equiv v$ we denote the {\it visual equality} of $u$ and
$v$, meaning that if $u=x_1^{\epsilon_1} \cdots x_n^{\epsilon_n}$
and $v=y_1^{\delta_1} \cdots y_m^{\delta_m}$ ($x_i, y_j \in X$; $\epsilon_i, \delta_j=\pm 1$),
then $n=m$ and $x_i=y_i$ and $\epsilon_i=\delta_i$ for each $i=1, \dots, n$.
For example, two words $x_1x_2x_2^{-1}x_3$ and $x_1x_3$ ($x_i \in X$) are {\it not} visually equal,
though $x_1x_2x_2^{-1}x_3$ and $x_1x_3$ are equal as elements of the free group with basis $X$.
The length of a word $v$ is denoted by $|v|$.
A word $v$ in
$X$ is said to be {\it reduced} if $v$ does not contain $xx^{-1}$ or $x^{-1}x$ for any $x \in X$.
A word is said to be {\it cyclically reduced}
if all its cyclic permutations are reduced.
A {\it cyclic word} is defined to be the set of all cyclic permutations of a
cyclically reduced word. By $(v)$ we denote the cyclic word associated with a
cyclically reduced word $v$.
Also by $(u) \equiv (v)$ we mean the {\it visual equality} of two cyclic words
$(u)$ and $(v)$. In fact, $(u) \equiv (v)$ if and only if $v$ is visually a cyclic shift
of $u$.

\begin{definition}
\label{def:alternating}
{\rm (1) Let $v$ be a reduced word in
$\{a,b\}$. Decompose $v$ into
\[
v \equiv v_1 v_2 \cdots v_t,
\]
where, for each $i=1, \dots, t-1$, all letters in $v_i$ have positive (resp., negative) exponents,
and all letters in $v_{i+1}$ have negative (resp., positive) exponents.
Then the sequence of positive integers
$S(v):=(|v_1|, |v_2|, \dots, |v_t|)$ is called the {\it $S$-sequence of $v$}.

(2) Let $(v)$ be a cyclic word in
$\{a, b\}$. Decompose $(v)$ into
\[
(v) \equiv (v_1 v_2 \cdots v_t),
\]
where all letters in $v_i$ have positive (resp., negative) exponents,
and all letters in $v_{i+1}$ have negative (resp., positive) exponents (taking
subindices modulo $t$). Then the {\it cyclic} sequence of positive integers
$CS(v):=\lp |v_1|, |v_2|, \dots, |v_t| \rp$ is called
the {\it cyclic $S$-sequence of $(v)$}.
Here the double parentheses denote that the sequence is considered modulo
cyclic permutations.

(3) A reduced word $v$ in $\{a,b\}$ is said to be {\it alternating}
if $a^{\pm 1}$ and $b^{\pm 1}$ appear in $v$ alternately,
i.e., neither $a^{\pm2}$ nor $b^{\pm2}$ appears in $v$.
A cyclic word $(v)$ is said to be {\it alternating}
if all cyclic permutations of $v$ are alternating.
In the latter case, we also say that $v$ is {\it cyclically alternating}.
}
\end{definition}

\begin{definition}
\label{def4.1(3)}
{\rm
For a rational number $r$ with $0<r\le 1$,
let $u_r$ be defined as in Lemma~\ref{presentation}.
Then the symbol $S(r)$ (resp., $CS(r)$) denotes the
$S$-sequence $S(u_r)$ of $u_r$
(resp., cyclic $S$-sequence $CS(u_r)$ of $(u_r)$), which is called
the {\it S-sequence of slope $r$}
(resp., the {\it cyclic S-sequence of slope $r$}).}
\end{definition}

Throughout this paper unless specified otherwise,
we suppose that $r$ is a rational number
with $0<r\le1$
(cf. Lemma~\ref{lemma:natural-homeomprphisms}),
and write $r$ as a continued fraction:
\begin{center}
\begin{picture}(230,70)
\put(0,48){$\displaystyle{
r=[m_1,m_2, \dots,m_k]:=
\cfrac{1}{m_1+
\cfrac{1}{ \raisebox{-5pt}[0pt][0pt]{$m_2 \, + \, $}
\raisebox{-10pt}[0pt][0pt]{$\, \ddots \ $}
\raisebox{-12pt}[0pt][0pt]{$+ \, \cfrac{1}{m_k}$}
}},}$}
\end{picture}
\end{center}
where $k \ge 1$, $(m_1, \dots, m_k) \in (\mathbb{Z}_+)^k$ and
$m_k \ge 2$ unless $k=1$. For brevity, we write $m$ for $m_1$.

\begin{lemma} [{\cite[Proposition~4.3]{lee_sakuma}}]
\label{lem:properties}
The following hold.
\begin{enumerate}[\indent \rm (1)]
\item Suppose $k=1$, i.e., $r=1/m$.
Then $S(r)=(m,m)$.

\item Suppose $k\ge 2$. Then each term of $S(r)$ is either $m$ or $m+1$,
and $S(r)$ begins with $m+1$ and ends with $m$.
Moreover, the following hold.

\begin{enumerate}[\rm (a)]
\item If $m_2=1$, then no two consecutive terms of $S(r)$ can be $(m, m)$,
so there is a sequence of positive integers $(t_1,t_2,\dots,t_s)$ such that
\[
S(r)=(t_1\langle m+1\rangle, m, t_2\langle m+1\rangle, m, \dots,
t_s\langle m+1\rangle, m).
\]
Here, the symbol ``$t_i\langle m+1\rangle$'' represents $t_i$ successive $m+1$'s.

\item If $m_2 \ge 2$, then no two consecutive terms of $S(r)$ can be $(m+1, m+1)$,
so there is a sequence of positive integers $(t_1,t_2,\dots,t_s)$ such that
\[
S(r)=(m+1, t_1\langle m\rangle, m+1, t_2\langle m\rangle,
\dots,m+1, t_s\langle m\rangle).
\]
Here, the symbol ``$t_i\langle m\rangle$'' represents $t_i$ successive $m$'s.
\end{enumerate}
\end{enumerate}
\end{lemma}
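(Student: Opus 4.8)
The statement to prove is Lemma~\ref{lem:properties}, describing the structure of the $S$-sequence $S(r)$ for $r=[m_1,\dots,m_k]$ with $0<r\le 1$.

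=== PROOF PROPOSAL ===

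\textbf{Plan of proof.}
The plan is to proceed by induction on $k$, the length of the continued fraction expansion $r=[m_1,\dots,m_k]$. The base case $k=1$, i.e.\ $r=1/m$, should follow by a direct computation: applying Lemma~\ref{presentation} with $p=m$, $q=1$, one finds $\epsilon_i=(-1)^{\lfloor i/m\rfloor}=1$ for all $1\le i\le m-1$, so $\hat u_{1/m}$ is a positive word of length $m-1$ that is alternating (it alternates $a$'s and $b$'s by construction). Then $u_{1/m}=a\hat u_{1/m}x^{\pm1}\hat u_{1/m}^{-1}$ with $x\in\{a,b\}$ according to the parity of $m$; the prefix $a\hat u_{1/m}x^{\mp\cdots}$ forms one maximal positive block and $\hat u_{1/m}^{-1}$ the maximal negative block, giving $S(1/m)=(m,m)$ after checking the block lengths carefully (the sign on the middle letter determines whether it joins the first or second block). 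I would verify this splitting explicitly using the formulas in both the odd-$p$ and even-$p$ cases.

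\textbf{Inductive step.}
For $k\ge 2$, the natural tool is a recursion relating $u_r$ (equivalently $S(r)$) for $r=[m_1,\dots,m_k]$ to that of the ``truncated'' slope $r'=[m_2,\dots,m_k]$ or $[m_1,\dots,m_{k-1}]$ — this is exactly the kind of recursion on continued fractions that underlies Riley's formula and is developed in \cite{lee_sakuma}. Since the lemma is quoted as \cite[Proposition~4.3]{lee_sakuma}, the honest approach here is to recall that recursion: there is a substitution rule expressing $S([m_1,\dots,m_k])$ in terms of $S([m_1,\dots,m_{k-1}])$ by replacing terms according to the last partial quotient $m_k$, and the ``$m$ or $m+1$'' dichotomy, together with the ``begins with $m+1$, ends with $m$'' normalization, is preserved under this rule. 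The two sub-cases (a) $m_2=1$ and (b) $m_2\ge 2$ then arise from analyzing which adjacency patterns are possible: when $m_2=1$ the recursion forbids two consecutive $m$'s, and when $m_2\ge 2$ it forbids two consecutive $m+1$'s. In each case the forbidden adjacency immediately yields the claimed block decomposition into runs $t_i\langle m+1\rangle$ separated by single $m$'s (resp.\ runs $t_i\langle m\rangle$ separated by single $m+1$'s), where the $t_i$ are determined by the deeper continued-fraction data $[m_2,\dots,m_k]$.

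\textbf{Main obstacle.}
The main difficulty is setting up and verifying the precise recursion for $S(r)$ under lengthening the continued fraction: one must track not only the multiset of block lengths but their cyclic/linear order and the boundary behavior (first and last terms), and confirm that the substitution rule interacts correctly with the reduction that occurs when concatenating $\hat u$ with its inverse and the middle letter. Controlling these boundary cancellations — ensuring no unexpected merging or splitting of blocks at the seams — is the delicate bookkeeping that makes the argument work. Once that recursion is in hand, the parity/adjacency analysis distinguishing cases (a) and (b) is routine: it is a matter of observing that a single application of the rule for $m_2$ either inserts isolated $m$'s between $(m+1)$-runs or isolated $(m+1)$'s between $m$-runs, and that this property then propagates. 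Since this is a result established in \cite{lee_sakuma}, I would in practice cite it, but the self-contained argument would follow the induction just outlined.
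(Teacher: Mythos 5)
The paper itself gives no proof of this lemma: it is quoted verbatim from \cite[Proposition~4.3]{lee_sakuma}, so ``cite the reference'' is indeed the paper's own treatment, and your closing remark that in practice you would cite it matches what the paper does. Your base case $k=1$ is also essentially fine: with $p=m$, $q=1$ one has $\epsilon_i=1$ for all $i$, the middle letter $b^{-1}$ (resp.\ $a^{-1}$) has negative exponent and so attaches to the block $\hat{u}_{1/m}^{-1}$, giving $S(1/m)=(m,m)$.

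As a self-contained argument, however, the inductive step is a genuine gap rather than a proof. The entire content of part (2) --- every term is $m$ or $m+1$, the sequence begins with $m+1$ and ends with $m$, and consecutive $(m,m)$ (resp.\ $(m+1,m+1)$) is forbidden according as $m_2=1$ or $m_2\ge 2$ --- is exactly what you attribute to an unspecified ``substitution rule,'' which you never set up or verify; saying ``once that recursion is in hand'' the rest is routine defers the whole difficulty. Moreover the recursion you gesture at is on the \emph{last} partial quotient $m_k$ (or, ambiguously, on dropping $m_1$), whereas the only recursion available in this circle of ideas is the front-stripping one of Lemma~\ref{lem:induction1}, which removes $m_1$ and $m_2$ and relates $CS(\tilde{r})$ to $CT(r)$; it is far from clear that appending $m_k$ induces any clean substitution on $S([m_1,\dots,m_{k-1}])$, since $p$ and $q$ change completely. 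Finally, you cannot simply invoke Lemma~\ref{lem:induction1} to close the induction: within this paper $T(r)$ and $CT(r)$ are only \emph{defined} (Definition~\ref{def_T(r)}) via the block decomposition asserted in the present lemma, so such an argument would be circular. A genuinely self-contained route would instead analyze the exponent pattern $\epsilon_i=(-1)^{\lfloor iq/p\rfloor}$ directly: the sign changes exactly when $\lfloor iq/p\rfloor$ increments, so the block lengths are the gaps between consecutive integers $i$ at which this happens, which take only the two values $m_1$ and $m_1+1$, and the finer adjacency pattern in cases (a) and (b) is read off from the second-level continued fraction; none of this bookkeeping appears in your sketch.
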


\begin{definition}
\label{def_T(r)}
{\rm
If $k\ge 2$, the symbol $T(r)$ denotes the sequence
$(t_1,t_2,\dots,t_s)$ in Lemma~\ref{lem:properties},
which is called the {\it $T$-sequence of slope $r$}.
The symbol $CT(r)$ denotes the cyclic
sequence represented by $T(r)$, which is called the
{\it cyclic $T$-sequence of slope $r$}.
}
\end{definition}

\begin{example}
\label{cyclic_sequence}
{\rm (1) Let $r={10}/{37}=[3,1,2,3]$.
By Lemma~\ref{presentation}, we
see that the $S$-sequence of $\hat{u}_r$ is
\[
S(\hat{u}_r)=(3, 4, 4, 3, 4, 4, 3, 4, 4, 3).
\]
By the formula for $u_r$ in Lemma~\ref{presentation},
this implies
\[
S(r)=S(u_r) =
(\underbrace{4,4,4}_3,3,\underbrace{4, 4}_2, 3, \underbrace{4, 4}_2, 3,
\underbrace{4,4,4}_3,3, \underbrace{4, 4}_2, 3, \underbrace{4, 4}_2, 3).
\]
So $T(r)=(3, 2, 2, 3, 2, 2)$ and $CT(r) = \lp 3, 2, 2, 3, 2, 2 \rp$.

(2) Let $r={8}/{35}=[4,2,1,2]$.
Again by Lemma~\ref{presentation},
we obtain that the $S$-sequence of $\hat{u}_r$ is
\[
S(\hat{u}_r)=(4, 4, 5, 4, 4, 5, 4, 4).
\]
By the formula for $u_r$ in Lemma~\ref{presentation},
this implies
\[
S(r)=S(u_r)=
(5, \underbrace{4}_1, 5,
\underbrace{4, 4}_2, 5, \underbrace{4, 4}_2, 5, \underbrace{4}_1, 5,
\underbrace{4, 4}_2, 5, \underbrace{4, 4}_2).
\]
So $T(r) = (1, 2, 2, 1, 2, 2)$ and $CT(r) = \lp 1, 2, 2, 1, 2, 2 \rp$.
}
\end{example}

\begin{lemma} [{\cite[Proposition~4.4 and Corollary~4.6]{lee_sakuma}}]
\label{lem:induction1}
Let $\tilde{r}$ be the rational number defined as
\[
\tilde{r}=
\begin{cases}
[m_3, \dots, m_k] & \text{if $m_2=1$};\\
[m_2-1, m_3, \dots, m_k] & \text{if $m_2 \ge 2$}.
\end{cases}
\]
Then we have $CS(\tilde{r})=CT(r)$.
\end{lemma}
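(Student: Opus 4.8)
The plan is to read both $CT(r)$ and $CS(\tilde r)$ off Riley's explicit word $u_r$ of Lemma~\ref{presentation}, reducing the identity to a renormalization fact about the exponent string $(\epsilon_1,\dots,\epsilon_{p-1})$, and then to prove that fact by induction on the length $k$ of the continued fraction of $r$ using the standard recurrence for continued fraction convergents. First I would analyze the doubled structure of $u_r$. Riley's formula writes $u_r\equiv a\,\hat u_r\,c^{\delta}\,\hat u_r^{-1}$ with $c\in\{a,b\}$ according to the parity of $p$ and $\delta=(-1)^q$. Since $\hat u_r^{-1}$ is the reverse of $\hat u_r$ with all exponents negated, and neither reversing nor negating affects run lengths, $S(\hat u_r^{-1})$ is the reversal of $S(\hat u_r)$; moreover $S(\hat u_r)$ is itself a palindrome, because the identity $\epsilon_{p-i}=(-1)^{q-1}\epsilon_i$ makes the exponent string a palindrome or an anti-palindrome. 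Hence $S(u_r)$ arises from two copies of $S(\hat u_r)$ by amalgamating runs only at the four junctions of the cyclic word $(a\,\hat u_r\,c^{\delta}\,\hat u_r^{-1})$. Using $\epsilon_1=+1$ and $\epsilon_{p-1}=-(-1)^q$ — both immediate from $\epsilon_i=(-1)^{\lfloor iq/p\rfloor}$ and $0<q/p<1$, which holds since $k\ge2$ — together with the parity of $p$, I would check that exactly two amalgamations occur: the leading letter $a$ merges into the first run of $\hat u_r$, and the central letter $c^{\delta}$ merges into the first run of $\hat u_r^{-1}$, each increasing that run length by $1$, while the other two junctions never merge. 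Thus $CS(u_r)$ is the cyclic concatenation of two copies of the sequence obtained from $S(\hat u_r)$ by increasing its first entry by $1$; and since $S(\hat u_r)$ begins and ends with $m$ (which one verifies along the way, via Lemma~\ref{lem:properties}), each seam exhibits the pattern ``$\dots,m,m+1,\dots$'', so no block of $CS(u_r)$ straddles a seam. By Lemma~\ref{lem:properties} this identifies $T(r)$ with two copies of the block-length sequence of that modified $S(\hat u_r)$, where a block means a maximal run of $m+1$'s if $m_2=1$ and a maximal run of $m$'s if $m_2\ge2$.

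It then remains to match this block-length sequence with a ``half'' of $CS(\tilde r)$. Because $\hat u_r$ has exponent string $(\epsilon_i)=((-1)^{\lfloor iq/p\rfloor})$, the block-length sequence is a second-level run statistic of a Beatty/Christoffel word of slope $q/p$, and the content of the claim is that the continued fraction step $r\mapsto\tilde r$ effects one renormalization step of that word. I would prove by induction on $k$ that the exponent string of $r=[m_1,\dots,m_k]$ is the image of the exponent string of $\tilde r$ under an explicit Sturmian-type substitution determined by $m_1$ (and by $m_3$ as well when $m_2=1$), using the recurrences $p_i=m_ip_{i-1}+p_{i-2}$ and $q_i=m_iq_{i-1}+q_{i-2}$ to control $\lfloor iq/p\rfloor$. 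The base case $k=2$ forces $m_2\ge2$ (the combination $m_2=1$, $k=2$ is excluded by the normalization $m_k\ge2$) and is a direct computation: for $r=[m_1,m_2]=m_2/(m_1m_2+1)$ the increments of $\lfloor iq/p\rfloor$ are essentially equally spaced, so $(\epsilon_i)$ consists of $m_2$ runs of length $m_1$; after the doubling this yields $CT(r)=\lp m_2-1,m_2-1\rp$, while $\tilde r=[m_2-1]$ gives $CS(\tilde r)=\lp m_2-1,m_2-1\rp$ by Lemma~\ref{lem:properties}(1). Pushing the substitution through the doubled structure and contracting runs then gives $CT(r)=CS(\tilde r)$ in general; the cases $m_2=1$ and $m_2\ge2$ run in parallel but are handled separately, as they correspond to the two shapes of $S(r)$ in Lemma~\ref{lem:properties}(2)(a) and (b).

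The step I expect to be the main obstacle is the renormalization in the second paragraph: identifying the precise Sturmian substitution attached to $r\mapsto\tilde r$ and verifying via the convergent recurrences that it commutes correctly with run-contraction, especially in the case $m_2=1$ where two continued fraction entries disappear at once, and in the degenerate slopes $r\equiv\pm1/p$ where $S(r)$ is governed by Lemma~\ref{lem:properties}(1) rather than (2). By contrast, the junction bookkeeping in the first paragraph, though fiddly — one must track how the parities of $p$ and $q$ interact with $\epsilon_1$ and $\epsilon_{p-1}$ — is purely mechanical once one has isolated the key point that the two genuine amalgamations only enlarge runs lying in the interior of a block, and hence leave the block-length pattern $T(r)$ undisturbed.
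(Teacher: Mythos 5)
The paper itself contains no proof of this lemma: it is imported wholesale from \cite[Proposition~4.4 and Corollary~4.6]{lee_sakuma}, so your argument can only be judged on its own terms. Your first step is correct and checkable: from $\epsilon_{p-i}=(-1)^{q-1}\epsilon_i$, $\epsilon_1=+1$, $\epsilon_{p-1}=(-1)^{q-1}$ and the parity of $p$ one verifies, exactly as you say, that in the cyclic word obtained from $a\,\hat u_r\,b^{(-1)^q}\hat u_r^{-1}$ (resp.\ $a\,\hat u_r\,a^{-1}\hat u_r^{-1}$) only the junctions at the initial letter $a$ and at the central letter amalgamate, so that $CS(r)$ consists of two copies of $S(\hat u_r)$ with its first entry raised by $1$; since each half then begins with $m+1$ and ends with $m$, no maximal block straddles a seam, and hence $CT(r)$ is the doubled block-length sequence of that modified $S(\hat u_r)$. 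This reduction is sound and consistent with Lemma~\ref{lem:properties} and with the examples in Section~\ref{group_presentation}.

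The genuine gap is that the remaining step, which is the actual content of the lemma, is only announced. The lemma asserts precisely that this doubled block-length sequence equals $CS(\tilde r)$, and your treatment of that assertion consists of the statement that you ``would prove by induction on $k$'' that the exponent string of $r$ is the image of that of $\tilde r$ under ``an explicit Sturmian-type substitution,'' together with the base case $k=2$ (which is fine). The substitution is never written down --- and its description as ``determined by $m_1$, and by $m_3$ when $m_2=1$'' is already a warning sign, since in the case $m_2=1$ the collapse is governed by $m_1$ and $m_2$, not by $m_3$ --- the inductive hypothesis is not formulated, and the verification that the substitution is compatible with taking run lengths twice, i.e.\ the place where the convergent recurrences and the behaviour of $\lfloor iq/p\rfloor$ actually have to be used, is exactly the step you yourself flag as the expected obstacle. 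Note also that you cannot close this by appealing to Lemma~\ref{lem:relation}, because that statement already presupposes $CS(\tilde r)=CT(r)$. So as written the proposal correctly reduces the lemma to the renormalization identity, but leaves that identity unproven.
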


\begin{lemma} [{\cite[Proposition~4.5]{lee_sakuma}}]
\label{lem:sequence}
The sequence $S(r)$ has a decomposition $(S_1, S_2, S_1, S_2)$ which satisfies the following.
\begin{enumerate} [\indent \rm (1)]
\item Each $S_i$ is symmetric,
i.e., the sequence obtained from $S_i$ by reversing the order is
equal to $S_i$. {\rm (}Here, $S_1$ is empty if $k=1$.{\rm )}
\item Each $S_i$
{\rm (}if it is not empty{\rm )}
occurs only twice in the cyclic sequence $CS(r)$.
\item $S_1$
{\rm (}if it is not empty{\rm )}
begins and ends with $m+1$.
\item $S_2$ begins and ends with $m$.
\end{enumerate}
\end{lemma}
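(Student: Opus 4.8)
The plan is to induct on the length $k$ of the continued fraction expansion $r=[m_1,\dots,m_k]$, using Lemmas~\ref{lem:properties} and~\ref{lem:induction1} to pass from $r$ to the shorter fraction $\tilde r$. First I would dispose of the base case $k=1$, where $r=1/m$ and $S(r)=(m,m)$ by Lemma~\ref{lem:properties}(1); here we take $S_1$ empty and $S_2=(m)$, so that $(S_1,S_2,S_1,S_2)=(m,m)$, and conditions (1)--(4) are immediate (note $S_2=(m)$ trivially occurs only twice in $CS(r)=\lp m,m\rp$, and begins and ends with $m$). For the inductive step with $k\ge 2$, the idea is that the coarse structure of $S(r)$ described in Lemma~\ref{lem:properties}(2) encodes the sequence $T(r)$, and $T(r)$ in turn is (a cyclic representative of) $S(\tilde r)$ by Lemma~\ref{lem:induction1}. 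So I would apply the induction hypothesis to $\tilde r$ to get a symmetric decomposition $S(\tilde r)=(\Sigma_1,\Sigma_2,\Sigma_1,\Sigma_2)$, and then ``inflate'' it through the substitution rule of Lemma~\ref{lem:properties}(2)(a) or (2)(b) to obtain the desired decomposition of $S(r)$.

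The technical heart is making this inflation precise and checking it preserves symmetry. Concretely, in case $m_2=1$ the map sending $T(r)=(t_1,\dots,t_s)$ to $S(r)$ replaces each entry $t_i$ by the block $t_i\langle m+1\rangle,m$; in case $m_2\ge 2$ it replaces $t_i$ by $m+1,t_i\langle m\rangle$. Writing $S_i$ for the image of $\Sigma_i$ under the appropriate block-substitution, one has $S(r)=(S_1,S_2,S_1,S_2)$ from the corresponding identity for $\tilde r$, once one checks that the substitution is compatible with concatenation and with the boundary conventions (in the $m_2=1$ case the ``trailing $m$'' of the last block of $\Sigma_2$ supplies the final $m$ promised by Lemma~\ref{lem:properties}(2); in the $m_2\ge 2$ case the ``leading $m+1$'' of the first block of $\Sigma_1$ supplies the initial $m+1$). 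Property (4), that $S_2$ begins and ends with $m$: in the $m_2=1$ case $\Sigma_2$ begins and ends with $m+1\ge 2$ (here $m$ plays the role of ``$m+1$'' relative to $\tilde r$, whose modulus is $m-1$; I must track the shift of modulus carefully), wait — more carefully, $\tilde r=[m_2-1,\dots]$ or $[m_3,\dots]$, so its modulus $\tilde m$ equals $m_2-1$ or $m_3$, and one checks directly from the block structure that each $S_i$ begins and ends with the value claimed. Property (1), symmetry of each $S_i$: this follows because the block-substitution sends a symmetric sequence to a symmetric sequence — reversing $S_i$ amounts to reversing $\Sigma_i$ and reversing each block, and the blocks $t\langle m+1\rangle,m$ resp.\ $m+1,t\langle m\rangle$ are \emph{not} individually symmetric, so one must observe that reversing the whole concatenation swaps the ``head'' and ``tail'' decorations across adjacent blocks and lands back on $S_i$; this bookkeeping is the main obstacle and deserves a careful lemma about how reversal interacts with the substitution.

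Property (2), that each nonempty $S_i$ occurs exactly twice in $CS(r)$: I would derive this from the fact (induction hypothesis) that each nonempty $\Sigma_i$ occurs exactly twice in $CS(\tilde r)=CT(r)$, together with the observation that the block-substitution $T(r)\mapsto S(r)$ is ``cyclically injective on subwords'' in the sense that a subword of $CS(r)$ of the form $S_i$ must begin at a block boundary and hence corresponds to a subword $\Sigma_i$ of $CT(r)$. The point is that the blocks have a recognizable shape — e.g.\ in the $m_2\ge 2$ case every block starts with the unique local occurrence of $m+1$ — so one can read the block decomposition off of $CS(r)$ unambiguously; establishing this ``unique parsing'' is the second place where care is needed. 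Once parsing is unique, occurrences of $S_i$ in $CS(r)$ biject with occurrences of $\Sigma_i$ in $CT(r)$, and there are exactly two of the latter. Finally I would double-check the edge cases where some $\Sigma_i$ is empty (inherited from a shorter base case), confirming that the stated conventions in the lemma (``$S_1$ is empty if $k=1$'', the parenthetical ``if it is not empty'' clauses) are exactly what the induction produces.
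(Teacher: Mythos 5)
Your overall strategy---induct on $k$, pass to $\tilde r$ via Lemma~\ref{lem:induction1}, and inflate a symmetric decomposition of $S(\tilde r)$ through the block substitutions of Lemma~\ref{lem:properties}(2)---is essentially the induction used in the cited source; the present paper records the outcome of that induction as Lemma~\ref{lem:relation}. But your key definition is wrong: taking $S_i$ to be the block-substitution image of $\Sigma_i$ cannot be repaired by the ``bookkeeping'' you defer, because for that choice properties (1), (3), (4) are simply false, not merely awkward. Concretely, take $r=[2,5]$ (Example~\ref{example:S-sequence}(2)): $\tilde r=[4]$, so $\Sigma_1=\emptyset$, $\Sigma_2=(4)$, and your substitution $t\mapsto(m+1,t\langle m\rangle)$ gives $S_1=\emptyset$ and $S_2=(3,2,2,2,2)$. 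The concatenation identity $S(r)=(S_1,S_2,S_1,S_2)$ does hold, but this $S_2$ is neither symmetric nor does it begin and end with $m=2$, and $S_1$ is empty although for every $k\ge 2$ the true $S_1$ is nonempty. The correct decomposition is $S_1=(3)$, $S_2=(2,2,2,2)$: one must redistribute boundary terms between consecutive inflated blocks (strip the leading $m+1$ from one and append an $m+1$ to the other when $m_2\ge 2$; transfer the trailing $m$ when $m_2=1$), and, as Lemma~\ref{lem:relation}(4) shows, when $m_2\ge 2$ the roles of $\Sigma_1$ and $\Sigma_2$ are even interchanged ($S_1$ is built from the entries of $T_2$ and $S_2$ from those of $T_1$). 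Your sketch of the symmetry step (``reversing swaps the head and tail decorations and lands back on $S_i$'') fails for your $S_i$: the reverse of $(t_1\langle m+1\rangle, m,\dots,t_{s_1}\langle m+1\rangle,m)$ begins with $m$, not $m+1$, so it is a genuinely different sequence.

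A second gap concerns alignment. Lemma~\ref{lem:induction1} gives only the equality $CS(\tilde r)=CT(r)$ of \emph{cyclic} sequences, whereas the lemma to be proved asserts a decomposition of the \emph{linear} sequence $S(r)$. To inflate $(\Sigma_1,\Sigma_2,\Sigma_1,\Sigma_2)$ you must know where $T(r)$, read off from the beginning of $S(r)$, starts relative to that decomposition; this phase is not determined by the cyclic equality, and it is exactly what the word-level comparison of $u_r$ with $u_{\tilde r}$ behind Lemma~\ref{lem:relation} pins down (it is also the source of the $\Sigma_1\leftrightarrow\Sigma_2$ swap in the $m_2\ge 2$ case). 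So your induction needs either a strengthened hypothesis tracking the linear relation between $S(r)$ and $S(\tilde r)$, or the explicit computation. By contrast, your unique-parsing argument for property (2) is sound in outline once the correct $S_1,S_2$ are in hand.
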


\begin{example}
{\rm (1) Let $r={10}/{37}=[3,1,2,3]$.
Recall from Example~\ref{cyclic_sequence} that
\[
S(r) =(4,4,4, 3, 4, 4, 3, 4, 4, 3, 4,4,4,3,4, 4, 3, 4, 4, 3).
\]
Putting $S_1=(4,4,4)$ and $S_2=(3, 4, 4, 3, 4, 4, 3)$, we have
\[
S(r)= (S_1, S_2, S_1, S_2),
\]
where $S_1$ and $S_2$ satisfy all the assertions in Lemma~\ref{lem:sequence}.

(2) Let $r={8}/{35}=[4,2,1,2]$. Recall also from
Example~\ref{cyclic_sequence} that
\[
S(r) = (5, 4, 5, 4, 4, 5, 4, 4, 5, 4, 5, 4, 4, 5, 4, 4).
\]
Putting $S_1=(5, 4, 5)$ and $S_2=(4, 4, 5, 4, 4)$, we also have
\[
S(r) = (S_1, S_2, S_1, S_2),
\]
where $S_1$ and $S_2$ satisfy all the assertions in Lemma~\ref{lem:sequence}.
}
\end{example}

\begin{remark}
\label{remark:recovering the slope}
{\rm
By using the fact that $u_r$ is obtained from
the line of slope $r$ in $\RR^2-\ZZ^2$
by reading its intersection with the vertical lattice lines,
we see that the slope $s=q/p$ is recovered from $CS(s)=\lp S_1,S_2,S_1,S_2 \rp$
by the rule that $p$ is the sum of the
terms of $S_1$ and $S_2$
whereas $q$ is the sum of the lengths of $S_1$ and $S_2$.
}
\end{remark}

\begin{lemma} [{\cite[Proof of Proposition~4.5]{lee_sakuma}}]
\label{lem:relation}
Let $\tilde{r}$ be the rational number defined as in Lemma~{\rm \ref{lem:induction1}}.
Also let $S(\tilde{r})=(T_1, T_2, T_1, T_2)$ and $S(r) =(S_1, S_2, S_1, S_2)$
be decompositions described as in Lemma~{\rm \ref{lem:sequence}}.
Then the following hold.
\begin{enumerate} [\indent \rm (1)]
\item If $m_2=1$ and $k=3$, then $T_1=\emptyset$, $T_2=(m_3)$,
and $S_1 =(m_3\langle m+1 \rangle)$, $S_2 =(m)$.

\item If $m_2=1$ and $k\ge 4$, then
$T_1=(t_1, \dots, t_{s_1})$, $T_2=(t_{s_1+1}, \dots, t_{s_2})$, and
\[
\begin{aligned}
S_1
&=(t_1 \langle m+1 \rangle,  m, t_2 \langle m+1 \rangle,
\dots, t_{s_1-1}\langle m+1 \rangle, m, t_{s_1}\langle m+1 \rangle), \\
S_2 &=(m, t_{s_1+1}\langle m+1\rangle,m, \dots, m, t_{s_2}\langle m+1\rangle, m).
\end{aligned}
\]

\item If $k=2$, then $T_1=\emptyset$, $T_2=(m_2-1)$,
and $S_1 =(m+1)$, $S_2 =((m_2-1)\langle m \rangle)$.

\item If $m_2 \ge 2$ and $k\ge 3$, then $T_1=(t_1, \dots, t_{s_1})$,
$T_2=(t_{s_1+1}, \dots, t_{s_2})$, and
\[
\begin{aligned}
S_1 &=(m+1, t_{s_1+1}\langle m\rangle, m+1,
\dots, m+1, t_{s_2}\langle m\rangle, m+1),\\
S_2 &=(t_1\langle m\rangle, m+1,t_2\langle m\rangle, \dots,
t_{s_1-1}\langle m\rangle, m+1, t_{s_1}\langle m\rangle).
\end{aligned}
\]
\end{enumerate}
\end{lemma}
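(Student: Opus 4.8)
The plan is to obtain the four formulas by substituting the known structure of $S(\tilde r)$ into the explicit description of $S(r)$, and then to check that the resulting four‑block splitting of $S(r)$ meets the requirements of Lemma~\ref{lem:sequence}. First I would fix the decomposition $S(\tilde r)=(T_1,T_2,T_1,T_2)$ supplied by Lemma~\ref{lem:sequence} applied to $\tilde r$; writing $T_1=(t_1,\dots,t_{s_1})$ and $T_2=(t_{s_1+1},\dots,t_{s_2})$ is exactly what defines the integers $t_1,\dots,t_{s_2}$ appearing in the statement. The four cases come from two independent dichotomies: whether $m_2=1$ or $m_2\ge2$, which selects one of the two formulas of Lemma~\ref{lem:properties}(2) for $S(r)$; and whether $\tilde r$ has continued fraction length $1$, i.e.\ $\tilde r=1/\ell$ with $\ell=m_3$ when $m_2=1$ or $\ell=m_2-1$ when $m_2\ge2$, in which case Lemma~\ref{lem:properties}(1) gives $S(\tilde r)=(\ell,\ell)$ so that $T_1=\emptyset$, $T_2=(\ell)$ (the degenerate cases (1) and (3)), or continued fraction length $\ge2$, so that $S(\tilde r)$ has the non‑degenerate shape of Lemma~\ref{lem:sequence} (cases (2) and (4)).

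The crux is that Lemma~\ref{lem:properties}(2) presents $S(r)$ as an explicit ``inflation'' of the cyclic sequence $CT(r)$: when $m_2=1$ one replaces each term $t_i$ of $T(r)$ by the block $t_i\langle m+1\rangle$ and puts a single $m$ after it, and when $m_2\ge2$ one replaces $t_i$ by $t_i\langle m\rangle$ and puts a single $m+1$ before it. Since $CT(r)=CS(\tilde r)=\lp T_1,T_2,T_1,T_2\rp$ by Lemma~\ref{lem:induction1}, the cyclic word $CS(r)$ is the inflation of $\lp T_1,T_2,T_1,T_2\rp$, and inflating the blocks $T_1$ and $T_2$ separately — distributing the single ``glue'' terms so that one block begins and ends with $m+1$ and the other begins and ends with $m$ — yields precisely the words $S_1$ and $S_2$ written in the statement. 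This redistribution of the glue is what makes $T_1$ and $T_2$ exchange roles between the two cases: when $m_2\ge2$ the isolated terms are the $m+1$'s, so the word beginning and ending with $m+1$ is the (flanked) re‑inflation of $T_2$ rather than of $T_1$, whence the index pattern of case (4); the degenerate cases follow by putting $T_1=\emptyset$, $T_2=(\ell)$. It then remains (i) to check that $(S_1,S_2,S_1,S_2)$ really is a linear presentation of $S(r)$ — here one must pick the linear representative of $CS(r)$ compatible with the chosen $S(\tilde r)=(T_1,T_2,T_1,T_2)$, the one bookkeeping point in this step — and (ii) to verify the four conditions of Lemma~\ref{lem:sequence}. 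Conditions (3) and (4) hold by construction; condition (1), the symmetry of $S_1$ and $S_2$, reduces to the symmetry of $T_1$ and $T_2$ (Lemma~\ref{lem:sequence}(1) for $\tilde r$), since reversing a re‑inflated block merely reverses the underlying sequence of $t$‑indices.

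The step I expect to cost the most work is condition (2), that each of $S_1$ and $S_2$ occurs exactly twice in $CS(r)$, since this does not pass formally through the inflation. ``At least twice'' is immediate from the splitting $S(r)=(S_1,S_2,S_1,S_2)$. For ``at most twice'' one uses that the interior glue terms of $S_i$ are isolated in $CS(r)$ — by Lemma~\ref{lem:properties}(2)(a)--(b) there is no $(m,m)$ when $m_2=1$ and no $(m+1,m+1)$ when $m_2\ge2$ — so any occurrence of $S_i$ in $CS(r)$ must align its interior blocks with honest maximal runs and hence comes from an occurrence of $T_1$ or $T_2$ in $CT(r)=CS(\tilde r)$, of which there are exactly two each by Lemma~\ref{lem:sequence}(2) for $\tilde r$. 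The delicate point is that the first and last blocks of $S_i$ are flanked by a glue term on one side only and could a priori be proper sub‑runs of longer maximal runs of $CS(r)$; ruling this out uses the fact that, by Lemma~\ref{lem:properties}(2) applied to $\tilde r$, the $t_i$ assume only the two consecutive values realized by the terms of $S(\tilde r)$, so a shifted occurrence would force a forbidden run‑length pattern. Alternatively, one can sidestep re‑proving condition (2) by appealing to the essential uniqueness of the decomposition in Lemma~\ref{lem:sequence}: conditions (1), (3), (4) together with the shape of $S(r)$ from Lemma~\ref{lem:properties} already pin $S_1$ down to a symmetric consecutive run‑block of $S(r)$ with the prescribed endpoints, and matching this against the (likewise essentially unique) decomposition of $S(\tilde r)$ delivers the formulas; the degenerate cases (1) and (3) are then most easily checked by hand from $S(\tilde r)=(\ell,\ell)$ and the $k=1$ base of Lemma~\ref{lem:properties}.
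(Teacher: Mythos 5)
Your strategy is essentially the construction in the cited proof of \cite[Proposition~4.5]{lee_sakuma}: pass to $\tilde r$, inflate the blocks $T_1,T_2$ according to Lemma~\ref{lem:properties}(2), and verify the conditions of Lemma~\ref{lem:sequence}; the case division, the symmetry argument for condition (1), and the degenerate cases (1) and (3) are fine. The genuine gap is at the passage from cyclic to linear information. Lemma~\ref{lem:induction1} only says that $CT(r)$ and $CS(\tilde r)$ agree as \emph{cyclic} sequences, so your inflation argument only identifies $CS(r)$ with the cyclic sequence determined by $(S_1,S_2,S_1,S_2)$, whereas the lemma asserts equalities for the decomposition of the \emph{linear} sequence $S(r)$, whose starting point is dictated by $u_r$ and is not yours to choose. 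Your ``bookkeeping point'' --- ``pick the linear representative of $CS(r)$ compatible with the chosen $S(\tilde r)$'' --- is therefore not an argument: in cases (3) and (4) the stated formulas amount to the claim that the linear $T(r)$ is the half-shifted sequence $(T_2,T_1,T_2,T_1)$, while in cases (1) and (2) it is $(T_1,T_2,T_1,T_2)$, and neither alignment follows from the cyclic equality alone. (It is a real phenomenon: for $r=[4,2,1,2]$ one has $S(\tilde r)=(2,2,1,2,2,1)$ with $T_1=(2,2)$, $T_2=(1)$, but $T(r)=(1,2,2,1,2,2)$.) This alignment is exactly what the word-level induction in the cited proof supplies. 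Your fallback --- essential uniqueness of the decomposition from conditions (1), (3), (4) and the shape of $S(r)$ --- is both unproved and too weak as stated: uniqueness genuinely uses condition (2) (if one candidate $S_1$ were a proper prefix of another, symmetry would make it also a suffix, producing a third occurrence in $CS(r)$), and even granting uniqueness of the linear decomposition you must still show that the linear starting point of $S(r)$ falls at a block boundary of your cyclically constructed partition.

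A second, smaller, issue is your verification of condition (2). The fact that the $t_i$ take only two consecutive values does not by itself exclude an occurrence of $S_i$ whose extremal (singly flanked) run is a proper sub-run of a longer maximal run. What closes this is Lemma~\ref{lem:sequence}(3) applied to $\tilde r$: in every case the block with singly flanked extremal runs is the inflation of $T_1$, and $T_1$ begins and ends with the larger of the two values occurring in $S(\tilde r)$, so no strictly longer maximal run is available; the other block's extremal entries are single terms $m$ (resp.\ $m+1$), which are themselves maximal runs by Lemma~\ref{lem:properties}(2a) (resp.\ (2b)). This is an easy repair; the linear alignment above is the substantive missing step.
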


\begin{example}
\label{example:S-sequence}
{\rm
(1) If $r=[2, 1, 5]$, then
$\tilde{r}=[5]$ by Lemma~\ref{lem:induction1}.
So by Lemma~\ref{lem:properties}(1),
$S(\tilde{r})=(\emptyset,5,\emptyset,5)$.
Thus by Lemma~\ref{lem:relation}(1),
\[
S(r)=(5 \langle 3 \rangle, 2, 5 \langle 3 \rangle, 2),
\]
where $S_1=(5 \langle 3 \rangle)$ and $S_2=(2)$.

(2) If $r=[2, 5]$, then
$\tilde{r}=[4]$ by Lemma~\ref{lem:induction1}.
So by Lemma~\ref{lem:properties}(1),
$S(\tilde{r})=(\emptyset,4,\emptyset,4)$.
Thus by Lemma~\ref{lem:relation}(3),
\[
S(r)=(3, 4 \langle 2 \rangle, 3, 4 \langle 2 \rangle),
\]
where $S_1=(3)$ and $S_2=(4 \langle 2 \rangle)$.
}
\end{example}

By Lemmas~\ref{lem:properties} and \ref{lem:relation},
we easily obtain the following corollary.

\begin{corollary}
\label{cor:cor-to-relation}
Let $S(r)=(S_1,S_2,S_1,S_2)$ be as in Lemma~{\rm \ref{lem:sequence}}.
Then the following hold.
\begin{enumerate} [\indent \rm (1)]
\item If $m_2=1$, then $(m+1,m+1)$ appears in $S_1$.
\item If $m_2\ge 2$ and if $r \ne [m,2]=2/(2m+1)$,
then $(m,m)$ appears in $S_2$.
\end{enumerate}
\end{corollary}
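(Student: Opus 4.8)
The plan is to read off both conclusions directly from the explicit shapes of $S_1$ and $S_2$ provided by Lemma~\ref{lem:relation}, feeding in only one extra ingredient: by Lemma~\ref{lem:properties}, the $S$-sequence of a slope of the form $1/m'$ is $(m',m')$, while the $S$-sequence of any slope whose continued fraction has length at least $2$ begins with (its first partial quotient)$\,+1$. The single observation around which to organize the argument is that, in every relevant case, $S_1$ (resp. $S_2$) begins with $t_1$ consecutive copies of $m+1$ (resp. of $m$), where $t_1$ is the leading term of the pertinent $T$-sequence, and that $t_1\ge 2$.

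For part (1), I would first note that $m_2=1$ forces $k\ge 3$, since $m_k\ge 2$ whenever $k\ge 2$. If $k=3$, Lemma~\ref{lem:relation}(1) gives $S_1=(m_3\langle m+1\rangle)$, and $m_3=m_k\ge 2$ already puts the pair $(m+1,m+1)$ at the front of $S_1$. If $k\ge 4$, Lemma~\ref{lem:relation}(2) displays $S_1$ as beginning with $t_1\langle m+1\rangle$, where $t_1$ is the leading term of $T_1$, hence of $S(\tilde r)$ with $\tilde r=[m_3,\dots,m_k]$; since $\tilde r$ has continued fraction length $k-2\ge 2$, Lemma~\ref{lem:properties} forces $S(\tilde r)$ to begin with $m_3+1\ge 2$, so $t_1\ge 2$ and $(m+1,m+1)$ again appears in $S_1$.

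For part (2), assume $m_2\ge 2$ and $r\ne[m,2]$. If $k=2$, then excluding $[m,2]=2/(2m+1)$ means $m_2\ge 3$, and Lemma~\ref{lem:relation}(3) gives $S_2=((m_2-1)\langle m\rangle)$ with $m_2-1\ge 2$, so $(m,m)$ appears in $S_2$. If $k\ge 3$, Lemma~\ref{lem:relation}(4) displays $S_2$ as beginning with $t_1\langle m\rangle$, where $t_1$ is the leading term of $T_1$, hence of $S(\tilde r)$ with $\tilde r=[m_2-1,m_3,\dots,m_k]$; this $\tilde r$ has continued fraction length $k-1\ge 2$, so by Lemma~\ref{lem:properties} its $S$-sequence begins with $(m_2-1)+1=m_2\ge 2$, giving $t_1\ge 2$ and $(m,m)$ at the front of $S_2$.

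The only point that needs a word of justification — and the closest thing to a difficulty — is that $T_1$ is nonempty in the cases $k\ge 4$ of (1) and $k\ge 3$ of (2), so that ``the leading term of $T_1$'' is meaningful. This is immediate: $\tilde r$ then has continued fraction length at least $2$, and if the first block in the Lemma~\ref{lem:sequence} decomposition $S(\tilde r)=(T_1,T_2,T_1,T_2)$ were empty, then $S(\tilde r)$ would begin with $T_2$, which by Lemma~\ref{lem:sequence}(4) starts with the first partial quotient of $\tilde r$ rather than that quotient plus one, contradicting Lemma~\ref{lem:properties}. Apart from this bookkeeping, the corollary is a direct consequence of Lemmas~\ref{lem:properties} and \ref{lem:relation}, exactly as the text announces.
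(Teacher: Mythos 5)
Your proof is correct and takes exactly the route the paper intends: the paper presents Corollary~\ref{cor:cor-to-relation} as an immediate consequence of Lemmas~\ref{lem:properties} and \ref{lem:relation}, and your case analysis (including the bookkeeping that $T_1\ne\emptyset$, so that $t_1$ is the leading term $m_3+1$, resp.\ $m_2$, of $S(\tilde r)$ and hence at least $2$) simply fills in the details left to the reader. Nothing further is needed.
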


\section{Small cancellation theory}
\label{sec:small_cancellation_theory}

\subsection{Basic definitions and preliminary facts}

Let $F(X)$ be the free group with basis $X$. A subset $R$ of $F(X)$
is said to be {\it symmetrized},
if all elements of $R$ are cyclically reduced and, for each $w \in R$,
all cyclic permutations of $w$ and $w^{-1}$ also belong to $R$.

\begin{definition}
{\rm Suppose that $R$ is a symmetrized subset of $F(X)$.
A nonempty word $b$ is called a {\it piece} if there exist distinct $w_1, w_2 \in R$
such that $w_1 \equiv bc_1$ and $w_2 \equiv bc_2$.
The small cancellation conditions $C(p)$ and $T(q)$,
where $p$ and $q$ are integers such that $p \ge 2$ and $q \ge 3$,
are defined as follows (see \cite{lyndon_schupp}).
\begin{enumerate}[\indent \rm (1)]
\item Condition $C(p)$: If $w \in R$
is a product of $t$ pieces, then $t \ge p$.

\item Condition $T(q)$: For $w_1, \dots, w_t \in R$
with no successive elements $w_i, w_{i+1}$
an inverse pair $(i$ mod $t)$, if $t < q$, then at least one of the products
$w_1 w_2,\dots,$ $w_{t-1} w_t$, $w_t w_1$ is freely reduced without cancellation.
\end{enumerate}
}
\end{definition}

We recall the following lemma
which concerns the word $u_r$ defined in Lemma~\ref{presentation}.

\begin{lemma} [{\cite[Lemma~5.3]{lee_sakuma}}]
\label{lem:maximal_piece}
Suppose that $r=[m_1, \dots, m_k]$ is a rational number with $0<r<1$,
and let $S(r)=(S_1, S_2, S_1, S_2)$ be as in Lemma~{\rm \ref{lem:sequence}}.
Decompose
\[
u_r \equiv v_1 v_2 v_3 v_4,
\]
where $S(v_1)=S(v_3)=S_1$ and $S(v_2)=S(v_4)=S_2$.
Then the following hold.
\begin{enumerate}[\indent \rm (1)]
\item If $k=1$, then the following hold.

\begin{enumerate}[\rm (a)]
\item No piece can contain $v_2$ or $v_4$.

\item No piece is of the form
$v_{2e} v_{4b}$ or $v_{4e} v_{2b}$,
where $v_{ib}$ and $v_{ie}$ are nonempty initial and terminal subwords of $v_i$, respectively.

\item Every subword of the form $v_{2b}$, $v_{2e}$, $v_{4b}$, or $v_{4e}$ is a piece,
where $v_{ib}$ and $v_{ie}$ are nonempty initial and terminal subwords of $v_i$ with $|v_{ib}|, |v_{ie}| \le |v_i|-1$, respectively.
\end{enumerate}

\item If $k \ge 2$, then the following hold.

\begin{enumerate}[\rm (a)]
\item No piece can contain $v_1$ or $v_3$.

\item No piece is of the form
$v_{1e} v_2 v_{3b}$ or $v_{3e} v_4 v_{1b}$,
where $v_{ib}$ and $v_{ie}$ are nonempty initial and terminal subwords of $v_i$, respectively.

\item Every subword of the form $v_{1e} v_2$, $v_2 v_{3b}$, $v_{3e} v_4$, or $v_4 v_{1b}$ is a piece,
where $v_{ib}$ and $v_{ie}$ are nonempty initial and terminal subwords of $v_i$ with $|v_{ib}|, |v_{ie}| \le |v_i|-1$, respectively.
\end{enumerate}
\end{enumerate}
\end{lemma}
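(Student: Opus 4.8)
The plan is to translate the notion of a piece into a purely combinatorial statement about the sign‑block structure of $u_r$, and then to read everything off from the description of $S(r)$ provided by Lemmas~\ref{lem:properties} and~\ref{lem:sequence}. Write $R$ for the symmetrized closure in $F(a,b)$ of the single relator $u_r^n$, so that $R$ is the set of all cyclic permutations of $u_r^{\pm n}$. Since two cyclic permutations of $u_r^n$ that differ by a shift of length divisible by $|u_r|$ coincide, a word $b$ is a piece precisely when it occurs as a subword of the cyclic word $(u_r^n)$ at two positions whose shift is \emph{not} a multiple of $|u_r|$, or occurs both in $(u_r^n)$ and in $(u_r^{-n})$. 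As usual for a relator which is a proper power, and using that $u_r$ is itself not a proper power, this reduces the whole lemma to an analysis of overlaps of the cyclic word $(u_r)$ with itself at a nontrivial shift and overlaps of $(u_r)$ with $(u_r^{-1})$; and every such overlap can be recorded at the level of sign‑blocks, i.e.\ as an overlap pattern of the cyclic sequence $CS(r)$ against $CS(r)$ or against its reverse.

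Next I would assemble the combinatorial input. By Lemma~\ref{lem:sequence}, $S(r)=(S_1,S_2,S_1,S_2)$ with each $S_i$ symmetric, $S_1$ beginning and ending with $m+1$, $S_2$ beginning and ending with $m$, and each nonempty $S_i$ occurring exactly twice in $CS(r)$; and by Lemma~\ref{lem:properties}(2), $S(r)$ begins with $m+1$ and ends with $m$, while $(m,m)$ never occurs when $m_2=1$ and $(m+1,m+1)$ never occurs when $m_2\ge 2$. From this I would extract two recognition principles: (i) a subword of $(u_r)$ that contains a full $S_1$‑block together with one flanking $m$‑block on each side determines which of the two occurrences of $S_1$ in $CS(r)$ it lies over, together with the position of that occurrence within the period — and symmetrically with $S_2$ in place of $S_1$; (ii) a sufficiently long subword of $(u_r)$ cannot occur in $(u_r^{-1})$, so reversed overlaps are excluded. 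Corollary~\ref{cor:cor-to-relation} enters here to handle the short or degenerate blocks and the slope $r=[m,2]=2/(2m+1)$.

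Granting (i) and (ii), part~(2) follows in three steps. For~(2)(a): if a piece $b$ contained $v_1$ (the case $v_3$ being symmetric), then $b$, as a common subword of two distinct elements of $R$, would display a full $S_1$‑block flanked by enough $S_2$‑material to invoke principle~(i); in the forward–forward case this forces the two occurrences to differ by a multiple of $|u_r|$, hence the two elements of $R$ to coincide, a contradiction, and in the forward–backward case principle~(ii) gives the contradiction. For~(2)(b): a piece of the form $v_{1e}v_2v_{3b}$ would contain all of $v_2$ together with a terminal $m+1$‑block of $v_1$ and an initial $m+1$‑block of $v_3$, i.e.\ a full $S_2$‑block flanked on both sides by $S_1$‑material, so the same dichotomy applies. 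For~(2)(c): conversely, each of $v_{1e}v_2$, $v_2v_{3b}$, $v_{3e}v_4$, $v_4v_{1b}$ is a piece, and here I would simply exhibit two distinct occurrences — the obvious one inside $(u_r)$ and a second one coming from the \emph{other} occurrence of the relevant $S_i$ in $CS(r)$ (or, via the symmetry of $S_1$ and $S_2$, from $(u_r^{-1})$); the hypothesis $|v_{ib}|,|v_{ie}|\le|v_i|-1$ is exactly what makes these two occurrences distinct. Part~(1), the base case $r=1/m$ — where $S(r)=(m,m)$, $S_1=\emptyset$, $v_1=v_3=\emptyset$, and $v_2,v_4$ are the two $m$‑blocks — follows the same scheme but is short enough to verify directly from the explicit form of $u_{1/m}$ in Lemma~\ref{presentation}.

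The hard part will be principle~(ii): ruling out overlaps of $(u_r)$ with $(u_r^{-1})$. Because $S_1$ and $S_2$ are palindromic and the reverse of $CS(r)=\lp S_1,S_2,S_1,S_2\rp$ is cyclically equal to $CS(r)$ again, the block data alone does \emph{not} forbid a reversed occurrence, so one must descend to the actual letters $a,b$ and their exponents in $u_r$ (as given by Lemma~\ref{presentation}) and use the resulting asymmetry. Carrying this out uniformly across the regimes — $k=1$ versus $k\ge 2$, $m_2=1$ versus $m_2\ge 2$, and the exceptional slope $r=[m,2]$ — is the main technical burden of the argument.
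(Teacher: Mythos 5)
The paper does not prove this lemma at all: it is imported verbatim from \cite[Lemma~5.3]{lee_sakuma}, so there is no in-paper argument to compare with. Your reduction of ``piece'' to self-overlaps of the periodic word $(u_r)$ at a shift that is not a multiple of $|u_r|$, together with overlaps of $(u_r)$ against $(u_r^{-1})$, is the right framework and matches the spirit of the cited proof. But as written the proposal has a genuine gap at its central step. Your recognition principle (i) claims that a subword exhibiting a full $S_1$-block with flanking material ``determines which of the two occurrences of $S_1$ in $CS(r)$ it lies over.'' It cannot: the cyclic sequence $CS(r)=\lp S_1,S_2,S_1,S_2\rp$ is invariant under the half-period shift by $|v_1v_2|=p$, and both occurrences of $S_1$ have identical block-level surroundings ($S_2$ on each side). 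So in the forward--forward case of (2)(a) and (2)(b) the block data only forces the shift to be $0$ or $\pm p \pmod{2p}$, not $0$; your conclusion that the two elements of $R$ must coincide does not follow. Ruling out the shift by $p$ requires descending to the letters exactly as you do for the reversed case: one must check that $v_1\ne v_3$ and $v_2\ne v_4$ as words, which holds because $u_r$ is alternating in $a,b$ (so a shift by odd $p$ exchanges $a$- and $b$-positions) and because when $p$ is even $q=|S_1|+|S_2|$ is odd (so the shift reverses the exponent pattern). Without this, (2)(a) and (2)(b) are not established.

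The second gap is one you name yourself: principle (ii), the exclusion of overlaps of $(u_r)$ with $(u_r^{-1})$, is declared ``the main technical burden'' and then not carried out. Since both halves of the negative direction of the lemma rest on it, the proof is incomplete rather than merely terse. Finally, for the positive direction (1)(c)/(2)(c) it is not enough to gesture at ``the other occurrence of the relevant $S_i$'': because $v_1\ne v_3$ in general, the second occurrence of, say, $v_{1e}v_2$ must actually be located in $(u_r^{-1})$ (using the palindromic symmetry of $S_1$ and $S_2$ from Lemma~\ref{lem:sequence}(1)) and one must verify that the two elements of $R$ so obtained are distinct words --- this is precisely where the hypothesis $|v_{ib}|,|v_{ie}|\le|v_i|-1$ is consumed, and it needs an explicit check rather than an appeal to symmetry.
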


In the following lemma, we mean by a {\it subsequence} a subsequence without leap.
Namely a sequence $(a_1,a_2,\dots, a_p)$ is called a {\it subsequence} of a cyclic sequence,
if there is a sequence $(b_1,b_2,\dots, b_t)$
representing the cyclic sequence
such that $p\le t$ and $a_i=b_i$ for $1\le i\le p$.

\begin{lemma}
\label{lem:maximal_piece2}
Suppose that
$r=[m_1, \dots, m_k]$
is a rational number with $0<r<1$
and that $n \ge 2$ is an integer.
Let $S(r)=(S_1, S_2, S_1, S_2)$ be as in Lemma~{\rm \ref{lem:sequence}}.
Then the following hold.
\begin{enumerate}[\indent \rm (1)]
\item
The cyclic word $(u_r^n)$ is not a product of $t$ pieces with $t\le 4n-1$.
\item
Let $w$ be a subword of the cyclic word $(u_r^n)$
which is a product of $4n-1$ pieces
but is not a product of $t$ pieces with $t<4n-1$.
Then $w$ contains a subword, $w'$,
such that $S(w')=((2n-1)\langle S_1,S_2\rangle, \ell)$
or $S(w')=(\ell, (2n-1)\langle S_2,S_1\rangle)$,
where $\ell\in\ZZ_+$.
\item
For a rational number $s$ with $0 \le s \le 1$,
suppose that the cyclic word $(u_s)$ contains a subword, $w$, as in {\rm (2)}.
Then $0<s<1$ and the following hold.
\begin{enumerate}[\rm (a)]
\item
If $k=1$, then $CS(s)$ contains $((2n-2) \langle m \rangle)$ as a subsequence.
\item
If $k \ge 2$, then $CS(s)$ contains $((2n-1) \langle S_1, S_2 \rangle)$ or
$((2n-1) \langle S_2, S_1 \rangle)$ as a subsequence.
\end{enumerate}
\end{enumerate}
\end{lemma}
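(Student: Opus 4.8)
The plan is to prove the three parts in sequence, since (2) builds on (1) and (3) builds on (2). For part (1), the strategy is to use Lemma~\ref{lem:maximal_piece}. Observe that $(u_r^n)$ has a natural decomposition into $4n$ blocks, namely $n$ copies of the pattern $v_1 v_2 v_3 v_4$ coming from $u_r \equiv v_1 v_2 v_3 v_4$. When $k\ge 2$, Lemma~\ref{lem:maximal_piece}(2)(a) says no piece can contain $v_1$ or $v_3$; hence each of the $2n$ subwords $v_1$ and each of the $2n$ subwords $v_3$ must ``straddle'' a boundary between two consecutive pieces in any factorization of $(u_r^n)$ into pieces. This produces at least $4n$ forced break points around the cyclic word, so at least $4n$ pieces are needed; in particular one cannot write $(u_r^n)$ as a product of $t\le 4n-1$ pieces. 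The case $k=1$ is handled identically using Lemma~\ref{lem:maximal_piece}(1)(a), with $v_2,v_4$ playing the role of the unbreakable blocks. I would write this out carefully as a counting argument: if $(u_r^n)$ were a product of $t$ pieces, mark the $t$ cyclic cut points; each unbreakable block forces a distinct cut point in its interior or at its ends in a way that shows $t\ge 4n$.

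For part (2), suppose $w$ is a subword of $(u_r^n)$ which is a product of exactly $4n-1$ pieces and of no fewer. Using the same ``no piece contains $v_1$ or $v_3$'' (resp.\ $v_2$ or $v_4$) obstruction together with the sharp forms Lemma~\ref{lem:maximal_piece}(2)(b),(c) (resp.\ (1)(b),(c)) describing exactly which subwords are pieces, I would argue that a minimal piece-decomposition of $w$ must consist of $4n-1$ maximal pieces of the shape $v_{1e}v_2$, $v_2v_{3b}$, $v_{3e}v_4$, $v_4v_{1b}$ chained together. Counting lengths: $4n-1$ such maximal pieces, read consecutively, must span at least $2n-1$ full periods $v_1v_2v_3v_4$ plus a little extra (because each full period $S_1,S_2,S_1,S_2$ uses up four such overlapping pieces but with shared letters, the arithmetic gives that $4n-1$ maximal pieces cover more than $2n-1$ periods of $S$-blocks). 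Translating to $S$-sequences via Definition~\ref{def:alternating}, this says $S(w)$ contains either $((2n-1)\langle S_1,S_2\rangle,\ell)$ or the mirrored version $(\ell,(2n-1)\langle S_2,S_1\rangle)$ for some $\ell\in\ZZ_+$, which is the claimed subword $w'$. The main obstacle here is the bookkeeping: one must be precise about how the overlaps between successive pieces of the form $v_{ie}v_{i+1}$ and $v_{i+1}v_{(i+2)b}$ share the block $v_{i+1}$, so that the count of $4n-1$ pieces genuinely forces $2n-1$ repetitions of the $(S_1,S_2)$ pattern rather than $2n-2$ or $2n$. I expect to need a short separate lemma, or a careful induction on the number of pieces, to pin this down.

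For part (3), suppose $(u_s)$ with $0\le s\le 1$ contains a subword $w$ as in (2). First, $s\notin\{0,1\}$: by Remark~\ref{rem:epsilon} the words $u_{0/1}=ab$ and $u_{1/1}=ab^{-1}$ are too short to contain any such $w$ (for $n\ge 2$ the subword $w'$ has length growing with $n$ and at least exceeds $2$), so $0<s<1$. Now apply part (2) to $s$ in place of $r$: the subword $w$ contains $w'$ with $S(w')=((2n-1)\langle S_1,S_2\rangle,\ell)$ or $(\ell,(2n-1)\langle S_2,S_1\rangle)$, where now $S_1,S_2$ refer to the decomposition of $S(s)$. Hence $CS(s)$ contains $((2n-1)\langle S_1,S_2\rangle)$ or $((2n-1)\langle S_2,S_1\rangle)$ as a subsequence (in the no-leap sense), which is exactly assertion (b) when $k\ge 2$. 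For $k=1$, $s=1/p$ for some $p\ge 2$, and $S(s)=(p,p)$ by Lemma~\ref{lem:properties}(1) with $S_1=\emptyset$, $S_2=(p)$; here the pattern $(2n-1)\langle S_1,S_2\rangle$ degenerates, so I would instead re-run the counting argument of (2) directly using Lemma~\ref{lem:maximal_piece}(1): the unbreakable blocks are the $v_2,v_4$ subwords, each of $S$-length $p=m$, and $4n-1$ pieces force at least $2n-2$ interior full blocks of length $m$ sandwiched between consecutive break points, yielding $((2n-2)\langle m\rangle)$ as a subsequence of $CS(s)$. The only delicate point is the degenerate/short-slope edge cases ($s$ close to $0$ or $1$, and $k=1$), which I would dispose of by the explicit formulas in Lemma~\ref{presentation} and Lemma~\ref{lem:properties}(1).
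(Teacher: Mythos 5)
The main gap is in your treatment of part (3), which rests on a misreading of the statement. In (3) the integers $k$ and $m$ and the subsequences $S_1,S_2$ all refer to the fixed relator slope $r$, not to $s$, and the only hypothesis on $s$ is that $(u_s)$ contains, as a subword, the word $w$ produced in (2) --- a subword of $(u_r^n)$ that is a product of $4n-1$ pieces, where ``piece'' is defined relative to the symmetrized set generated by $u_r^n$. Your step ``apply part (2) to $s$ in place of $r$'' is therefore not available: nothing is assumed about piece decompositions relative to $u_s^n$, and even if it were, it would produce the pattern built from the decomposition of $S(s)$, which is not what (3b) asserts. The argument actually needed (and this is essentially the paper's entire proof, since (1) and (2) are quoted from \cite[Lemma~4.3]{lee_sakuma_7}) is a transfer of syllables: by (2), $w$ contains $w'$ with, say, $S(w')=((2n-1)\langle S_1,S_2\rangle,\ell)$, where $S_1,S_2$ come from $S(r)$; since $w'$ is also a subword of $(u_s)$, every term of $S(w')$ except possibly the first and last is a genuine term of $CS(s)$. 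For $k=1$ this gives $((2n-2)\langle m\rangle)$ at once. For $k\ge 2$ one must further promote the initial term $m+1$ of $S(w')$ to a full term of $CS(s)$; this uses Lemma~\ref{lem:properties} applied to $s$ (the terms of $CS(s)$ take at most two consecutive values), since otherwise the initial syllable of $w'$ would lie inside a longer syllable of $(u_s)$ and $CS(s)$ would contain a term at least $m+2$ together with terms $m$ and $m+1$. Nothing of this kind appears in your proposal, and your separate $k=1$ discussion again conflates the roles of $r$ and $s$.

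Your counting for (1) is also insufficient as sketched. The word $u_r^n=(v_1v_2v_3v_4)^n$ contains $n$ copies of each of $v_1$ and $v_3$, not $2n$, so the ``no piece contains $v_1$ or $v_3$'' argument forces only one cut point inside each of $2n$ unbreakable blocks, i.e.\ $t\ge 2n$, which does not rule out $t=4n-1$. To reach $4n$ you must also invoke Lemma~\ref{lem:maximal_piece}(2b) (resp.\ (1b) when $k=1$): the cut points lying inside the unbreakable blocks divide the cyclic word into exactly $2n$ bridging arcs of the form $v_{1e}v_2v_{3b}$ or $v_{3e}v_4v_{1b}$ (resp.\ $v_{2e}v_{4b}$ or $v_{4e}v_{2b}$), none of which can be a single piece, so each contributes at least one further cut point, giving $t\ge 2n+2n=4n$. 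Your sketch of (2) has an analogous, and acknowledged, bookkeeping gap. Note finally that the paper does not reprove (1) and (2) at all --- it cites them --- so the substance of your proposal lies in (3), where the misreading above is fatal.
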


\begin{proof}
The first two assertions are nothing other than \cite[Lemma~4.3]{lee_sakuma_7}.
The last assertion follows from the second assertion as follows.
Suppose that $(u_s)$ satisfies the assumption of the assertion.
Then, by the second assertion, $(u_s)$ contains a subword, $w'$,
such that $S(w')=((2n-1)\langle S_1,S_2\rangle, \ell)$
or $S(w')=(\ell, (2n-1)\langle S_2,S_1\rangle)$,
where $\ell\in\ZZ_+$.
By Remark~\ref{rem:epsilon}, we have $s\ne 0, 1$.
In the following, we assume $S(w')=((2n-1)\langle S_1,S_2\rangle, \ell)$.
(The other case can be treated similarly.)
If $k=1$, then $S_1=\emptyset$ and $S_2=(m)$ by Lemma~\ref{lem:sequence},
and therefore $S(w')=((2n-1)\langle m\rangle, \ell)$.
Since $w'$ is a subword of $(u_s)$,
the subsequence of $S(w')$ obtained by deleting the
first and the last components is a subsequence of $CS(s)$.
Hence $CS(s)$ contains $((2n-2) \langle m \rangle)$ as a subsequence.
If $k\ge 2$, then we see by Lemma~\ref{lem:sequence}
that $S(w')$ consists of $m$ and $m+1$,
and $S(w')$ begins with $m+1$.
On the other hand, $CS(s)$ consists of at most two integers
by Lemma~\ref{lem:properties}.
Hence, the first component, $m+1$, of $S(w')$ must be a component of $CS(s)$
and therefore $CS(s)$ contains $((2n-1)\langle S_1,S_2\rangle)$ as a subsequence.
\end{proof}

The following proposition enables us to
apply the small cancellation theory to our problem.

\begin{proposition}[{\cite[Proposition~4.4]{lee_sakuma_7}}]
\label{prop:small_cancellation_condition_heckoid}
Suppose that $r$ is a rational number with $0 < r< 1$
and that $n$ is an integer with $n \ge 2$.
Let $R$ be the symmetrized subset of $F(a, b)$ generated
by the single relator $u_{r}^n$ of the upper presentation of
$\Hecke(r;n)$.
Then $R$ satisfies $C(4n)$ and $T(4)$.
\end{proposition}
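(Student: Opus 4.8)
The plan is to verify the two conditions separately; $C(4n)$ is immediate and all the substance lies in $T(4)$. For $C(4n)$: every element of $R$ is a cyclic permutation of $u_r^{n}$ or of $u_r^{-n}=(u_r^{n})^{-1}$, so if $w\in R$ is a product of $t$ pieces, then the cyclic word $(w)$ is a product of at most $t$ pieces. Since $(w)$ is $(u_r^{n})$ or its inverse, and inversion carries pieces to pieces because $R$ is symmetrized, $(u_r^{n})$ is a product of at most $t$ pieces, and Lemma~\ref{lem:maximal_piece2}(1) forces $t\ge 4n$.

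For $T(4)$, I would argue by contradiction: suppose there are $w_1,\dots,w_t\in R$ with $t\in\{2,3\}$, no cyclically successive pair an inverse pair, and every product $w_iw_{i+1}$ (indices mod $t$) freely reducible. Let $c_i$ be the longest terminal segment of $w_i$ whose inverse begins $w_{i+1}$; since $w_{i+1}\ne w_i^{-1}$, each $c_i$ is a common subword of two distinct elements of $R$, hence a piece. First I would dispose of the degenerate configurations — some $c_i$ exhausting $w_i$ or $w_{i+1}$, or the two segments $c_i$ and $c_{i-1}^{-1}$ at the ends of $w_i$ overlapping — each of which would express a cyclic permutation of $u_r^{\pm n}$ as a product of at most three pieces, contradicting $C(4n)$ (recall $4n\ge 8$). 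Otherwise we may write $w_i\equiv c_{i-1}^{-1}B_ic_i$ with the $c_i$ nonempty pieces, the $B_i$ nonempty, and no cancellation at the $B_i\mid B_{i+1}$ junctions by maximality; geometrically this is a reduced van Kampen diagram over $\langle a, b \svert u_r^n\rangle$ with one interior vertex, of degree $t$, where the relator boundaries meet.

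Now the structure theory enters. A piece of $(u_r^{n})$ cannot contain a full $v_1$- or $v_3$-block when $k\ge 2$, nor a full $v_2$- or $v_4$-block when $k=1$ (this underlies Lemma~\ref{lem:maximal_piece2}, in parallel with Lemma~\ref{lem:maximal_piece}), so every piece has length less than $\tfrac12|u_r|$ and hence $|B_i|>(n-1)|u_r|\ge|u_r|$. Consequently $w_1\cdots w_t$ freely reduces to a cyclic conjugate of the cyclically reduced nonempty word $B_1\cdots B_t$, which therefore represents $1$ in $\Hecke(r;n)$. I would then confront this with the cyclic $S$-sequence of $B_1\cdots B_t$: it is alternating along each $B_i$ (each $w_i$ is cyclically alternating) but has a non-alternating defect at each junction — maximality of the cancellation forces the last letter of $B_i$ to equal the first letter of $B_{i+1}$ — so the long subwords of cyclic permutations of $u_r^{\pm n}$ forced to appear in $B_1\cdots B_t$ avoid these junctions; feeding them back through Lemma~\ref{lem:maximal_piece2}(2)--(3) and the decomposition $S(r)=(S_1,S_2,S_1,S_2)$ of Lemma~\ref{lem:sequence} is meant to force one of the pieces $c_i$ to contain a forbidden block. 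The case $t=2$ runs on the same lines, applied to $B_1B_2$ and $B_2B_1$.

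I expect the crux — the main obstacle — to be exactly this last step: turning the bare fact ``$B_1\cdots B_t=1$'' into a contradiction by showing that the relator boundaries meeting at the vertex, stripped of their short shared pieces, cannot be glued along pieces into a relator-labelled configuration. This demands a meticulous $S$-sequence bookkeeping around the degree-$t$ vertex, organized by the dichotomy $k=1$ versus $k\ge 2$ and with care for the several boundary sub-cases (a cancelled segment ending exactly at a block boundary, relators of type $u_r^{+n}$ versus $u_r^{-n}$, and so on). This is the content of \cite[Proposition~4.4]{lee_sakuma_7}, and its proof rests squarely on Lemmas~\ref{lem:maximal_piece} and \ref{lem:maximal_piece2}.
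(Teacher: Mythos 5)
Your treatment of $C(4n)$ is fine: once Lemma~\ref{lem:maximal_piece2}(1) is available, a decomposition of any $w\in R$ into $t$ pieces passes to the cyclic word $(u_r^{\pm n})$ and forces $t\ge 4n$. The genuine gap is in $T(4)$. You reduce to the configuration $w_i\equiv c_{i-1}^{-1}B_ic_i$ and then explicitly defer the decisive step to ``meticulous $S$-sequence bookkeeping,'' which you yourself identify as the crux; a proposal that stops at its crux is not a proof. Worse, the step you aim for is pointed in the wrong direction: $T(4)$ is a purely combinatorial condition about free cancellation between elements of $R$, so the observation that $B_1\cdots B_t$ represents $1$ in $\Hecke(r;n)$ carries no contradiction whatsoever (a great many nontrivial words represent $1$), and nothing in your outline extracts a combinatorial impossibility from the juxtaposition of the blocks $B_i$. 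There is no reason to expect that route to close up, and it is not how the cited source argues.

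The actual proof of $T(4)$ is a two-line parity argument using structure you never invoke, namely Definition~\ref{def:alternating}(3): $u_r$ is alternating of even length $2p$ with first letter a power of $a$ and last letter a power of $b$, hence $u_r^{n}$ is \emph{cyclically alternating} of even length, and so is every element of the symmetrized set $R$. Consequently, for each $w\in R$ the first and last letters of $w$ involve \emph{different} generators. Now take $w_1,w_2,w_3\in R$ and suppose $w_1w_2$ and $w_2w_3$ both admit cancellation. Then the last letter of $w_1$ involves the same generator as the first letter of $w_2$, and the last letter of $w_2$ the same generator as the first letter of $w_3$. Chasing generator types around the triple (three forced alternations within the $w_i$, two forced equalities at the cancelling junctions) shows that the last letter of $w_3$ and the first letter of $w_1$ involve different generators, so $w_3w_1$ is reduced without cancellation. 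No pieces, diagrams, or $S$-sequences are needed for this half; the paper itself signals the mechanism by repeatedly appealing to ``$(u_{1/p}^n)$ is alternating'' in the proofs of Lemmas~\ref{lem:simple_boundary} and \ref{lem:no_edge}. (Note also that the $t=2$ case you discuss is not part of the condition, which concerns $3\le t<q$.)
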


Now we want to investigate the geometric consequences of
Proposition~\ref{prop:small_cancellation_condition_heckoid}.
Let us begin with necessary definitions and notation following \cite{lyndon_schupp}.
A {\it map} $M$ is a finite $2$-dimensional cell complex
embedded in $\RR^2$.
To be precise, $M$ is a finite collection of vertices ($0$-cells), edges ($1$-cells),
and faces ($2$-cells) in $\RR^2$ satisfying the following conditions.
\begin{enumerate}[\indent \rm (i)]
\item A vertex is a point in $\RR^2$.

\item An edge $e$ is homeomorphic to an open interval
such that $\bar e=e\cup\{ a\}\cup \{b\}$,
where $a$ and $b$ are vertices of $M$ which are possibly identical.

\item For each face $D$ of $M$,
there is a continuous map $f$ from the
$2$-ball $B^2$ to $\RR^2$ such that
\begin{enumerate}
\item the restriction of $f$ to the interior of $B^2$
is a homeomorphism onto $D$, and

\item the image of $\partial B^2$ is equal to
$\cup_{i=1}^t \bar e_i$ for some set $\{e_1,\dots, e_t\}$ of edges of $M$.
\end{enumerate}
\end{enumerate}
The underlying space of $M$, i.e., the union of the cells in $M$,
is also denoted by the same symbol $M$.
The boundary (frontier), $\partial M$, of $M$ in $\RR^2$
is regarded as a $1$-dimensional subcomplex of $M$.
An edge may be traversed in either of two directions.
If $v$ is a vertex of a map $M$, $d_M(v)$, the {\it degree of $v$},
denotes the number of oriented edges in $M$ having $v$ as initial vertex.
A vertex $v$ of $M$ is called an {\it interior vertex}
if $v\not\in \partial M$, and an edge $e$ of $M$ is called
an {\it interior edge} if $e\not\subset \partial M$.

A {\it path} in $M$ is a sequence of oriented edges $e_1, \dots, e_t$ such that
the initial vertex of $e_{i+1}$ is the terminal vertex of $e_i$ for
every $1 \le i \le t-1$. A {\it cycle} is a closed path, namely
a path $e_1, \dots, e_t$
such that the initial vertex of $e_1$ is the terminal vertex of $e_t$.
If $D$ is a face of $M$, any cycle of minimal length which includes
all the edges of the boundary, $\partial D$, of $D$
going around once along the boundary of $D$
is called a {\it boundary cycle} of $D$.
To be precise it is defined as follows.
Let $f: B^2 \rightarrow D$ be a continuous map satisfying
condition (iii) above.
We may assume that $\partial B^2$ has a cellular structure
such that the restriction of $f$ to each cell is a homeomorphism.
Choose an arbitrary orientation of $\partial B^2$, and let
$\hat e_1, \dots, \hat e_t$ be the oriented edges of $\partial B^2$,
which are oriented in accordance with the orientation of $\partial B^2$
and which lie on $\partial B^2$ in this cyclic order with respect to the orientation of $\partial B^2$.
Let $e_i$ be the orientated edge $f(\hat e_i)$ of $M$.
Then the cycle $e_1, \dots, e_t$ is a boundary cycle of $D$.
By $d_M(D)$, the {\it degree of $D$}, we denote
the number of unoriented edges in a boundary cycle of $D$.

\begin{definition}
{\rm A non-empty map $M$ is called a {\it $[p, q]$-map} if the following conditions hold.
\begin{enumerate}[\indent \rm (i)]
\item $d_M(v) \ge p$ for every interior vertex $v$ in $M$.

\item $d_M(D) \ge q$ for every face $D$ in $M$.
\end{enumerate}
}
\end{definition}

\begin{definition}
{\rm Let $R$ be a symmetrized subset of $F(X)$. An {\it $R$-diagram} is
a pair $(M,\phi)$ of
a map $M$ and a function $\phi$ assigning to each oriented edge $e$ of $M$, as a {\it label},
a reduced word $\phi(e)$ in $X$ such that the following hold.
\begin{enumerate}[\indent \rm (i)]
\item If $e$ is an oriented edge of $M$ and $e^{-1}$ is the oppositely oriented edge,
then $\phi(e^{-1})=\phi(e)^{-1}$.

\item For any boundary cycle $\delta$ of any face of $M$,
$\phi(\delta)$ is a cyclically reduced word
representing an element of $R$.
(If $\alpha=e_1, \dots, e_t$ is a path in $M$, we define $\phi(\alpha) \equiv \phi(e_1) \cdots \phi(e_t)$.)
\end{enumerate}
We denote an $R$-diagram $(M,\phi)$ simply by $M$.
}
\end{definition}

Let $D_1$ and $D_2$ be faces (not necessarily distinct) of $M$
with an edge $e \subseteq \partial D_1 \cap \partial D_2$.
Let $e \delta_1$ and $\delta_2e^{-1}$ be boundary cycles of $D_1$ and $D_2$, respectively.
Let $\phi(\delta_1)=f_1$ and $\phi(\delta_2)=f_2$. An $R$-diagram $M$
is said to be {\it reduced}
if one never has $f_2=f_1^{-1}$.
It should be noted that if $M$ is reduced
then $\phi(e)$ is a piece for every interior edge $e$ of $M$.

As explained in \cite[Convention~1]{lee_sakuma},
we may assume the following convention.

\begin{convention}
\label{convention}
{\rm
Suppose that $r$ is a rational number with $0 < r< 1$
and that $n$ is an integer with $n \ge 2$.
Let $R$ be the symmetrized subset of $F(a, b)$
generated by the single relator $u_r^n$ of the upper presentation of $\Hecke(r;n)$.
For any reduced $R$-diagram $M$,
we assume that $M$ satisfies the following.
\begin{enumerate}[\indent \rm (1)]
\item $d_M(v) \ge 3$ for every interior vertex $v$ of $M$.

\item For every edge $e$ of $\partial M$,
the label $\phi(e)$ is a piece.

\item For a path $e_1, \dots, e_t$ in $\partial M$ of length $t\ge 2$
such that the vertex $\bar{e}_i\cap \bar{e}_{i+1}$
has degree $2$ for $i=1,2,\dots, t-1$,
$\phi(e_1) \phi(e_2) \cdots\phi(e_t)$ cannot be expressed
as a product of fewer than $t$ pieces.
\end{enumerate}
}
\end{convention}

The following corollary is immediate from Proposition~\ref{prop:small_cancellation_condition_heckoid}
and Convention~\ref{convention}.

\begin{corollary}[{\cite[Corollary~4.9]{lee_sakuma_7}}]
\label{small_cancellation_condition_2}
Suppose that $r$ is a rational number with $0<r<1$,
and that $n$ is an integer with $n \ge 2$.
Let $R$ be the symmetrized subset of $F(a, b)$
generated by the single relator $u_r^n$ of the upper presentation
of $\Hecke(r;n)$.
Then every reduced $R$-diagram is a $[4, 4n]$-map.
\end{corollary}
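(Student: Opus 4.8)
The assertion is the standard dictionary between the small cancellation hypotheses furnished by Proposition~\ref{prop:small_cancellation_condition_heckoid} and the combinatorics of reduced diagrams, so the plan is to run the usual Lyndon--Schupp argument, taking care along $\partial M$, where edge labels are no longer automatically pieces.

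Let $M$ be a reduced $R$-diagram, normalised as in Convention~\ref{convention}. For the face condition, recall that in a reduced diagram the label $\phi(e)$ of every interior edge is a piece, and that by Convention~\ref{convention}(2) the label of every edge of $\partial M$ is a piece too. Hence, for any face $D$ with boundary cycle $e_1,\dots,e_t$, the word $\phi(e_1)\phi(e_2)\cdots\phi(e_t)$ represents an element of $R$ and is displayed as a product of $t$ pieces; since $R$ satisfies $C(4n)$, this gives $t\ge 4n$, and since $t$ agrees with $d_M(D)$ (see below) one gets $d_M(D)\ge 4n$. For the interior vertex condition, Convention~\ref{convention}(1) gives $d_M(v)\ge 3$ at every interior vertex $v$, so it remains to exclude $d_M(v)=3$. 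This is the usual consequence of $T(4)$: if $v$ is interior of degree $3$ and $w_1,w_2,w_3\in R$ are the labels of boundary cycles of the three faces incident to $v$, each read starting just after $v$, then each product $w_iw_{i+1}$ (indices mod $3$) fails to be freely reduced without cancellation, because $w_i$ and $w_{i+1}$ run over the common edge of their two faces at $v$ with opposite orientations, while no consecutive pair $w_i,w_{i+1}$ is an inverse pair since $M$ is reduced across that edge; this contradicts $T(4)$. Hence $d_M(v)\ge 4$, and $M$ is a $[4,4n]$-map.

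The one point that needs genuine attention is the boundary bookkeeping behind the equality $t=d_M(D)$: an edge lying on $\partial M$ borders only one face, so the face bound cannot be read off from reducedness and has to be routed through Convention~\ref{convention}(2)--(3). Concretely, one uses Convention~\ref{convention}(3) to see that a stretch of $\partial D$ running through degree-$2$ vertices of $\partial M$ cannot be rewritten with fewer pieces than its length, and one must separately rule out a boundary cycle of a face that backtracks over an edge. I expect this to be routine once the normalisations built into Convention~\ref{convention} are in force; the two inequalities $d_M(D)\ge 4n$ and $d_M(v)\ge 4$ themselves are immediate from $C(4n)$ and $T(4)$ respectively.
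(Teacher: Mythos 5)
Your argument is correct and is essentially the paper's own route: the paper treats the corollary as immediate from Proposition~\ref{prop:small_cancellation_condition_heckoid} together with Convention~\ref{convention}, exactly the dictionary you spell out (interior edge labels are pieces by reducedness, boundary edge labels by Convention~\ref{convention}(2), so $C(4n)$ gives $d_M(D)\ge 4n$, while Convention~\ref{convention}(1) plus the standard $T(4)$ cancellation argument around an interior vertex rules out degree $3$). The boundary bookkeeping you flag is indeed routine under the stated conventions, so no further work is needed.
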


We turn to interpreting conjugacy in terms of diagrams.

\begin{definition}
\label{def:annular_map}
{\rm
An {\it annular map} $M$ is a connected map such that $\RR^2-M$ has exactly two connected components.
It is said to be {\it nontrivial}
if it contains a $2$-cell.
For a symmetrized subset $R$ of $F(a, b)$,
an {\it annular $R$-diagram} is an $R$-diagram
whose underlying map is an annular map.
}
\end{definition}

Let $M$ be an annular $R$-diagram, and let $K$ and $H$ be, respectively,
the unbounded and bounded components of $\RR^2-M$. We call
$\partial K (\subset \partial M)$
the {\it outer boundary} of $M$, while
$\partial H (\subset \partial M)$
is called the {\it inner boundary} of $M$.
Clearly, the {\it boundary} of $M$, $\partial M$, is the union of the outer boundary and the inner boundary.
A cycle of minimal length which contains all the edges in the outer (inner, resp.)
boundary of $M$ going around once along the boundary of $K$ ($H$, resp.)
is an {\it outer {\rm (}inner, {\rm resp.)} boundary cycle} of $M$.
An {\it outer {\rm (}inner, {\rm resp.)} boundary label of $M$} is defined to be a word $\phi(\alpha)$ in $X$
for $\alpha$ an outer (inner, resp.) boundary cycle of $M$.

\begin{convention}
\label{convention2}
{\rm
Since $M$ is embedded in $\RR^2$,
each $2$-cell of $M$ inherits an orientation of $\RR^2$.
Throughout this paper,
we assume, unlike the usual orientation convention, that
$\RR^2$ is oriented so that the boundary cycles of the $2$-cells of $M$
are clockwise.
Thus the outer boundary cycles are clockwise
and inner boundary cycles are counterclockwise,
unlike the convention in \cite[p.253]{lyndon_schupp}.
}
\end{convention}

The following lemma is a well-known classical result in combinatorial group theory.

\begin{lemma} [{\cite[Lemmas~V.5.1 and V.5.2]{lyndon_schupp}}]
\label{lem:lyndon_schupp}
Suppose $G=\langle X \,|\, R \, \rangle$ with $R$ being symmetrized.
Let $u, v$ be two cyclically reduced words in $X$
which are not trivial in $G$ and which are not conjugate in $F(X)$.
Then $u$ and $v$ represent conjugate elements in $G$ if and only if
there exists a reduced nontrivial annular $R$-diagram $M$ such that
$u$ is an outer boundary label and $v^{-1}$ is an inner boundary label of $M$.
\end{lemma}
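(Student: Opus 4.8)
The plan is to reduce the statement to the disk version of van Kampen's lemma (see \cite[Chapter~V]{lyndon_schupp}): a cyclically reduced word $w$ in $X$ is trivial in $G$ if and only if it occurs as a boundary label of some reduced disk $R$-diagram. I would freely use the standard diagram operations — subdividing an edge (which alters no path label up to visual equality) and the \emph{reduction step}, namely deleting a pair of faces that share an edge along which their boundary labels are mutually inverse and then folding the two bounding edge-paths together, which strictly lowers the face count and does not change a cyclically reduced boundary label.

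For the ``if'' direction, suppose a reduced nontrivial annular $R$-diagram $M$ with outer boundary label $u$ and inner boundary label $v^{-1}$ is given. Since $M$ is connected, I would join a vertex of the outer boundary to a vertex of the inner boundary by an edge-path $\gamma$, put $w:=\phi(\gamma)$, and slice $M$ open along $\gamma$, obtaining a disk $R$-diagram whose boundary cycle, read from a suitable basepoint and with the orientation of Convention~\ref{convention2}, is visually $u\,w\,v^{-1}\,w^{-1}$. Van Kampen's lemma then gives $uwv^{-1}w^{-1}=1$ in $G$, so $u=wvw^{-1}$ in $G$ and $u$, $v$ are conjugate in $G$.

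Conversely, suppose $u=zvz^{-1}$ in $G$ for some $z\in F(X)$, so that $uzv^{-1}z^{-1}$ lies in the normal closure of $R$ in $F(X)$. After the usual normalizations, van Kampen's lemma supplies a disk $R$-diagram whose boundary cycle splits into four consecutive arcs labeled, visually, $u$, $z$, $v^{-1}$, $z^{-1}$. Subdividing so that the arcs labeled $z$ and $z^{-1}$ carry matching cellular structures, I would glue them together by a label-preserving homeomorphism; a short check of basepoints shows the two remaining arcs become the two boundary circles of an annular $R$-diagram $M_0$, which (after suitably embedding the annulus in $\RR^2$) has outer boundary label $u$ and inner boundary label $v^{-1}$. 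Applying the reduction step until no cancelling pair remains yields a reduced annular $R$-diagram $M$ with the same two boundary labels, since $u$ and $v$ are cyclically reduced. Finally, if $M$ had no $2$-cell it would be a finite connected graph with non-simply-connected complement, and reading and freely reducing its outer and inner boundary cycles would exhibit $u$ and $v$ as conjugate in $F(X)$, contrary to hypothesis; hence $M$ is nontrivial.

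I expect the technical core to be the bookkeeping in the ``only if'' direction: producing a disk diagram whose boundary reads literally $uzv^{-1}z^{-1}$ with the two $z$-arcs cellularly compatible for gluing, correctly tracking which glued-up arc is the outer and which the inner boundary circle, and verifying that the reductions leave the labels $u$ and $v^{-1}$ intact verbatim rather than merely up to conjugacy. All of this is routine manipulation of van Kampen diagrams, and once it is in place the equivalence is formal. Note that the hypothesis that $u$ and $v$ are not conjugate in $F(X)$ plays no role in the logical equivalence itself; it is used only — but crucially — to promote the annular diagram of the ``only if'' direction to one actually containing a $2$-cell.
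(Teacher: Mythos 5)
The paper itself offers no proof of Lemma~\ref{lem:lyndon_schupp}: it is quoted from \cite[Lemmas~V.5.1 and V.5.2]{lyndon_schupp}, so the only meaningful comparison is with the classical argument, and your cut-and-glue sketch is indeed that argument in outline. Your ``if'' direction is fine (and your orientation bookkeeping is consistent with Convention~\ref{convention2}), as is your explanation of how non-conjugacy in $F(X)$ is used to guarantee a $2$-cell.

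There is, however, a genuine gap in the ``only if'' direction, located exactly at the two steps you dismiss as routine bookkeeping. A van Kampen diagram with boundary label $uzv^{-1}z^{-1}$ is only a connected, simply connected planar complex, not a topological disk: its boundary cycle may traverse bridge edges twice, and in particular the arc labelled $z$ and the arc labelled $z^{-1}$ may overlap (picture a dumbbell-shaped diagram, two lobes joined by an arc that both $z$-arcs run along). In such a configuration the proposed identification of the two $z$-arcs folds edges onto themselves and does not yield an annular map at all --- the hole is crushed. Similarly, the reduction step is not innocuous in the annular setting: cancelling a pair of faces whose union encircles the hole disconnects the diagram or destroys annularity, and even in the disk case cancelling faces that meet along more than one edge requires a surgery argument; the fact that $u$ and $v$ are cyclically reduced does not by itself guarantee that both boundary labels survive verbatim. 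Excluding these degenerations is precisely where the hypothesis that $u$ and $v$ are nontrivial in $G$ enters (in the cleanest dumbbell degeneration the two lobes are disk diagrams whose boundary labels are conjugates of $u$ and of $v^{-1}$, forcing both to be trivial in $G$), and handling them is the actual content of the cited lemmas. As written, your argument never invokes nontriviality in $G$ --- your closing remark in effect asserts that only the non-conjugacy hypothesis carries weight --- so the construction can fail; this needs to be repaired, or, as the paper does, simply delegated to \cite{lyndon_schupp}.
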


\subsection{Structure theorem and its corollary}

We recall the following lemma obtained
from the arguments of \cite[Theorem~V.3.1]{lyndon_schupp}.

\begin{lemma}
\label{lem:inequality}
Let $M$ be an arbitrary connected map, and let $h$ denote the number of holes of $M$. Then
\[
4-4h \le \sum_{v \in \partial M} (3-d_M(v))+ \sum_{v \in M -\partial M} (4-d_M(v))+ \sum_{D \in M} (4-d_M(D)).
\]
In particular, if $M$ is a $[4,4n]$-map, then
\[
4-4h \le \sum_{v \in \partial M} (3-d_M(v)) + \sum_{D \in M} (4-4n).
\]
\end{lemma}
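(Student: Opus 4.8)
The plan is to obtain both inequalities from Euler's formula applied to a cell decomposition of $S^2$, together with two elementary counting identities; this is the classical argument behind \cite[Theorem~V.3.1]{lyndon_schupp}, adjusted to the present normalization (degree $3$ tolerated at boundary vertices, degree $4$ required in the interior). We may assume $M$ contains at least one edge, which is the only case relevant below; write $V$, $E$ and $F$ for the numbers of vertices, edges and $2$-cells of $M$. Compactify $\RR^2$ to $S^2$ and fill in the unbounded complementary region together with each of the $h$ bounded complementary regions (the ``holes'') by a $2$-cell attached along a boundary cycle of that region. Since $M$ is connected, each complementary region is an open disk, so this yields a cell decomposition $\bar M$ of $S^2$ with the same vertices and edges as $M$ and with $F+h+1$ faces. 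Euler's formula then gives
\[
V-E+F=1-h,\qquad\text{i.e.}\qquad 4-4h=4V-4E+4F.
\]

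Next I would record the two counting identities
\[
\sum_{v}d_M(v)=2E \qquad\text{and}\qquad \sum_{D\in\bar M}d_{\bar M}(D)=2E,
\]
the first because each edge contributes $2$ to the sum of vertex degrees (a loop contributing $2$ at its single endpoint), and the second because in the closed-surface map $\bar M$ every edge is traversed exactly twice among all face boundary cycles. Letting $D_0,D_1,\dots,D_h$ be the faces of $\bar M$ that fill the $h+1$ complementary regions, the second identity gives
\[
\sum_{D\in M}d_M(D)=2E-\sum_{j=0}^{h}d_{\bar M}(D_j).
\]
Now expand the right-hand side of the asserted inequality, use $\sum_v d_M(v)=2E$ to rewrite the vertex-degree sums, and substitute $4-4h=4V-4E+4F$; after cancellation the inequality becomes equivalent to
\[
V_\partial+\sum_{D\in M}d_M(D)\le 2E,
\]
where $V_\partial$ denotes the number of boundary vertices of $M$. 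By the displayed formula this in turn is equivalent to $V_\partial\le\sum_{j=0}^{h}d_{\bar M}(D_j)$.

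This last inequality I would prove directly: its right-hand side is the total number of edge-traversals in the boundary cycles of $D_0,\dots,D_h$, hence also equals the total number of vertex-occurrences in those cycles (a closed edge-path has as many vertices as edges); and every boundary vertex of $M$, being on the frontier of $M$, lies in the closure of at least one complementary region and therefore occurs at least once in one of these boundary cycles. Distinct boundary vertices give distinct occurrences, so $V_\partial\le\sum_{j}d_{\bar M}(D_j)$, which proves the first inequality. For the ``in particular'' statement, if $M$ is a $[4,4n]$-map then $d_M(v)\ge 4$ at every interior vertex makes $\sum_{v\in M-\partial M}(4-d_M(v))\le 0$, so that term may be dropped, while $d_M(D)\ge 4n$ at every face gives $4-d_M(D)\le 4-4n$, so $\sum_{D\in M}(4-d_M(D))\le\sum_{D\in M}(4-4n)$; combining these with the first inequality gives the second.

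The only step that is not routine bookkeeping is the topological input used to set up Euler's formula, namely that the complementary regions of the connected planar complex $M$ are open $2$-cells, so that $\bar M$ is genuinely a cell decomposition of $S^2$; this is exactly where the connectedness hypothesis enters, and one must also be mildly careful with boundary cycles that traverse a pendant or pinching edge twice and with degenerate maps having no $2$-cells. None of this causes real difficulty, and the remainder is a mechanical combination of Euler's relation with the two $2E$-counts.
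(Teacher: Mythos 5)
Your proof is correct: the reduction via Euler's formula ($V-E+F=1-h$ after filling the complementary disks), the two $2E$-counts, and the final comparison $V_\partial\le\sum_j d_{\bar M}(D_j)$ reproduce exactly the classical curvature-formula argument of Lyndon--Schupp, Theorem~V.3.1, which is all the paper itself invokes (it gives no independent proof, only the citation). Your restriction to maps with at least one edge is also the right precaution, since the literal statement degenerates for a one-vertex map, and it is harmless in all of the paper's applications.
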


In the above lemma and throughout this paper,
the symbol $v \in X$ ($D \in X$, resp.)
under the symbol $\sum$, where $X$ is a map $M$ or a subspace of a map $M$,
means that the sum is over the vertices $v$ (the faces $D$, resp.)
of the map $M$ contained in the subspace $X$.

The following proposition will play an essential role in the proof of the structure theorem.

\begin{proposition}
\label{prop:key}
Let $M$ be an arbitrary connected $[4,4n]$-map.
Put
\[
\begin{aligned}
h= &\ \text{\rm the number of holes of $M$;} \\
V= &\ \text{\rm the number of vertices of $M$;} \\
E= &\ \text{\rm the number of (unoriented) edges of $M$;} \\
F= &\ \text{\rm the number of faces of $M$.}
\end{aligned}
\]
Also put
\[
\begin{aligned}
A= &\ \text{\rm the number of vertices $v$ in $\partial M$ such that $d_M(v)=2$;} \\
B= &\ \text{\rm the number of vertices $v$ in $\partial M$ such that $d_M(v) \ge 4$;} \\
C= &\ \text{\rm the number of vertices $v$ in $\partial M$ such that $d_M(v)=3$.}
\end{aligned}
\]
Then the following hold, where $\lceil x \rceil$ is the smallest integer not less than $x$.
\begin{enumerate} [\indent \rm (1)]
\item $F \ge B+ \lceil C/2 \rceil +1-h$.

\item $A \ge (4n-3)B+(4n-4) \lceil C/2 \rceil +4(1-h)n$.
\end{enumerate}

\end{proposition}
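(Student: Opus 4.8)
The plan is to derive both inequalities from the Euler characteristic / combinatorial counting underlying Lemma~\ref{lem:inequality}, together with the $[4,4n]$-map hypothesis. First I would set up the basic counting identities for the connected map $M$ with $h$ holes: the Euler formula gives $V-E+F=1-h$ (a connected planar complex with $h$ holes is homotopy equivalent to a wedge of $h$ circles). Next I would relate $E$ to the degree sums. Counting incidences of oriented edges with faces, $\sum_{D\in M} d_M(D)$ counts each interior edge twice and each boundary edge once, so $\sum_{D} d_M(D)=2E - (\text{number of boundary edges})$; and since each face has $d_M(D)\ge 4n$, this yields a lower bound $2E \ge 4n\,F + (\text{number of boundary edges})$. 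Meanwhile the number of boundary edges equals the number of boundary vertices (each boundary component is a cycle), and the boundary vertices are partitioned into the $A$ vertices of degree $2$, the $C$ vertices of degree $3$, and the $B$ vertices of degree $\ge 4$ — so the boundary has $A+B+C$ vertices and $A+B+C$ edges.

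For part (1), I would feed these relations into Lemma~\ref{lem:inequality}: the interior-vertex term is $\le 0$ and each face contributes $4-d_M(D)\le 4-4n\le -4n$ (using $n\ge 2$, so $4-4n\le -4$, in particular $\le -4$), while $\sum_{v\in\partial M}(3-d_M(v)) = A\cdot 1 + C\cdot 0 + \sum_{d_M(v)\ge 4}(3-d_M(v)) \le A - B$, since each vertex of degree $\ge 4$ contributes at most $-1$. Hence $4-4h \le A - B - 4nF$, i.e.\ $A \ge 4nF + B + 4(1-h) \cdot$\,(something); but to get (1) I actually want to eliminate $A$ and keep $F$, so instead I would argue directly. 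The cleaner route for (1): use that each face $D$ has at least $4n\ge 8$ edges on its boundary and at most $d_M(D)-$(number of its boundary edges lying in $\partial M$) of these can be shared with other faces; combined with a discharging-type count over the $B$ vertices of degree $\ge 4$ and the $C$ vertices of degree $3$ (each such vertex forces, locally, the existence of distinct faces meeting it), one shows $F \ge B + \lceil C/2\rceil + 1 - h$. I expect the precise bookkeeping here — making sure vertices of degree $3$ are counted with the factor $\lceil C/2\rceil$ rather than $C$, because two consecutive degree-$3$ boundary vertices can be shared by the same pair of faces — to be the main obstacle, and it is handled by carefully pairing up consecutive degree-$3$ vertices along each boundary component.

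For part (2), I would combine (1) with the face-degree bound. Since $M$ is a $[4,4n]$-map, every face has degree $\ge 4n$; counting edge-face incidences and subtracting boundary contributions as above, the total number of edge-slots on face boundaries is $\ge 4nF$, and the number of these slots that are NOT boundary edges of $M$ is $\ge 4nF - (A+B+C)$, each such interior edge being shared by exactly two face-slots. Re-examining Lemma~\ref{lem:inequality} in the strengthened form $4-4h \le (A-B) + F(4-4n)$ derived above, I solve for $A$: $A \ge B + F(4n-4) + 4(1-h)$. Now substitute the lower bound $F \ge B+\lceil C/2\rceil + 1 - h$ from part (1), valid because $4n-4>0$ for $n\ge2$: $A \ge B + (4n-4)(B + \lceil C/2\rceil + 1 - h) + 4(1-h) = (4n-3)B + (4n-4)\lceil C/2\rceil + (4n-4)(1-h) + 4(1-h) = (4n-3)B + (4n-4)\lceil C/2\rceil + 4n(1-h)$, which is exactly assertion (2). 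The only subtlety is checking that the inequality $F(4-4n)\le F\sum(4-d_M(D))/F$ direction is used correctly — i.e.\ that replacing $\sum_D(4-d_M(D))$ by $F(4-4n)$ weakens the inequality in the right direction — which holds since $4-d_M(D)\le 4-4n$. I would present part (2) as a short formal deduction once part (1) and the refined Euler inequality are in hand.
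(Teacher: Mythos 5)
Your part (2) is essentially the paper's own argument and is correct as written: apply Lemma~\ref{lem:inequality}, bound $\sum_{v\in\partial M}(3-d_M(v))\le A-B$ (degree-$2$ vertices give $+1$, degree-$3$ give $0$, degree $\ge 4$ give at most $-1$), solve for $A$, and substitute the bound on $F$ from part (1); the arithmetic $(B+\lceil C/2\rceil+1-h)(4n-4)+4(1-h)+B=(4n-3)B+(4n-4)\lceil C/2\rceil+4n(1-h)$ checks out. The genuine gap is part (1), on which (2) depends: you never prove it. After abandoning the Euler-inequality route (where, incidentally, the intermediate bound $4-4n\le -4n$ and the resulting ``$4-4h\le A-B-4nF$'' are false, though you discard them), you fall back on a ``discharging-type count'' over the degree-$3$ and degree-$\ge4$ boundary vertices, explicitly flag its bookkeeping as ``the main obstacle'', and leave it unresolved. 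The assertion that each such vertex ``forces, locally, the existence of distinct faces meeting it'' is unjustified---one face can meet many high-degree boundary vertices, and a degree-$3$ or degree-$4$ boundary vertex with several boundary edges need not contribute a new face---and your heuristic for the factor $\lceil C/2\rceil$ (pairing consecutive degree-$3$ vertices shared by one pair of faces) is not where that factor comes from.

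In fact (1) needs no discharging at all; it follows from the vertex-degree count you never wrote down, which is how the paper proves it. Since $M$ is a $[4,4n]$-map, every interior vertex has degree at least $4$, so
\[
2E=\sum_v d_M(v)\ \ge\ 2A+3C+4(V-A-C),
\qquad\text{i.e.}\qquad E\ \ge\ 2V-A-\tfrac{C}{2}.
\]
Combining with Euler's formula $V-E+F=1-h$ gives
\[
F\ \ge\ V-A-\tfrac{C}{2}+1-h\ \ge\ (A+B+C)-A-\tfrac{C}{2}+1-h\ =\ B+\tfrac{C}{2}+1-h,
\]
and integrality of $F$ yields $F\ge B+\lceil C/2\rceil+1-h$; the $\lceil C/2\rceil$ arises simply because each degree-$3$ boundary vertex falls short of degree $4$ by one, i.e.\ a deficit of $1/2$ in the edge count, not from any pairing of consecutive degree-$3$ vertices. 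With (1) established this way, your deduction of (2) goes through verbatim and coincides with the paper's proof.
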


\begin{proof}
(1) Since $M$ is a $[4,4n]$-map, every interior vertex of $M$ has degree at least $4$.
So we have
\[
E \ge 1/2\{2A+3C+4(V-A-C)\}=2V-A-C/2.
\]
This inequality together with Euler's formula $V-E+F=1-h$ yields
$V+F \ge 2V-A-C/2+1-h$, so that
\[
\begin{aligned}
F &\ge V-A-C/2+1-h \\
&\ge (A+B+C)-A-C/2+1-h\\
&=B+C/2+1-h.
\end{aligned}
\]
Since $F$ is an integer, we finally have
\[
F \ge B+\lceil C/2 \rceil +1-h,
\]
as required.

(2) By Lemma~\ref{lem:inequality}, we have
\[
\begin{aligned}
4-4h &\le \sum_{v \in \partial M} (3-d_M(v)) + \sum_{D \in M} (4-4n) \\
&=\sum_{v \in \partial M} (3-d_M(v)) + F(4-4n),
\end{aligned}
\]
so that
\[
\sum_{v \in \partial M} (3-d_M(v)) \ge F(4n-4)+4-4h.
\]
Here, since $A-B \ge \sum_{v \in \partial M} (3-d_M(v))$,
we have, by (1),
\[
\begin{aligned}
A-B &\ge F(4n-4)+4-4h \\
& \ge (B+\lceil C/2 \rceil +1-h)(4n-4)+4-4h \\
&=(4n-4)B+(4n-4)\lceil C/2 \rceil + 4(1-h)n,
\end{aligned}
\]
so that $A \ge (4n-3)B+(4n-4)\lceil C/2 \rceil+4(1-h)n$,
as required.
\end{proof}

\begin{remark}
\label{rem:big_degree_vertex}
{\rm
In Proposition~\ref{prop:key}(2), if the equality holds,
then the following hold.
\begin{enumerate}
\item $V=A+B+C$, that is, there is no vertex of $M$ of degree $1$
and every vertex of $M$ lies in $\partial M$;

\item $\sum_{v \in \partial M} (3-d_M(v))=A-B$, that is,
every vertex of $\partial M$ has degree $2$, $3$ or $4$;

\item $d_M(D)=4n$ for every face $D$ of $M$.
\end{enumerate}
}
\end{remark}

Now we obtain the following strong structure theorem.

\begin{theorem}[Structure Theorem]
\label{thm:annular_structure}
Suppose that $r=[m_1, \dots, m_k]$ is a rational number with $0<r<1$,
and that $n$ is an integer with $n \ge 2$.
Let $R$ be the symmetrized subset of $F(a, b)$
generated by the single relator $u_r^n$ of the upper presentation
of $\Hecke(r;n)$,
and let $S(r)=(S_1, S_2, S_1, S_2)$ be as in Lemma~{\rm \ref{lem:sequence}}.
Suppose that $M$ is a reduced nontrivial annular $R$-diagram such that,
for $\alpha$ and $\delta$ which are, respectively, arbitrary
outer and inner boundary cycles of $M$,
\begin{enumerate}[\indent \rm (i)]
\item
$(\phi(\alpha))\equiv (u_s)$ and $(\phi(\delta))\equiv (u_{s'}^{\pm 1})$
for some rational numbers $s$ and $s'$ with $0 \le s, s' \le 1$,
\item
if $k=1$, then $CS(\phi(\alpha))$ and $CS(\phi(\delta))$
do not contain $((2n-2) \langle m \rangle)$ as a subsequence,
whereas if
$k \ge 2$, then $CS(\phi(\alpha))$ and $CS(\phi(\delta))$
do not contain $((2n-1) \langle S_1, S_2 \rangle)$ nor
$((2n-1) \langle S_2, S_1 \rangle)$ as a subsequence.
\end{enumerate}
Let the outer and inner boundaries of $M$ be denoted by
$\sigma$ and $\tau$, respectively.
Then the following hold.
\begin{enumerate} [\indent \rm (1)]
\item The outer and inner boundaries $\sigma$ and $\tau$ are simple,
i.e., they are homeomorphic to the circle,
and there is no edge contained in $\sigma \cap \tau$.

\item Every vertex of $M$ lies in $\partial M$.

\item $d_M(v)=2$ or $4$ for every vertex $v$ of $\partial M$.

\item In particular, if $\sigma \cap \tau = \emptyset$,
then between any two vertices of degree $4$
there should occur exactly $4n-3$ vertices of degree $2$
on both $\sigma$ and $\tau$,
and $d_M(D)=4n$ for every face $D$ of $M$.
\end{enumerate}
\end{theorem}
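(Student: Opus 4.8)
The plan is to derive everything from the combinatorial inequalities of Proposition~\ref{prop:key} together with the small cancellation input of Lemma~\ref{lem:maximal_piece2}. First I would observe that by Corollary~\ref{small_cancellation_condition_2}, $M$ is a $[4,4n]$-map, and since $M$ is annular it has $h=1$ hole. Thus Proposition~\ref{prop:key}(2) becomes $A\ge (4n-3)B+(4n-4)\lceil C/2\rceil$, where $A,B,C$ count boundary vertices of degree $2$, $\ge 4$, and $3$ respectively. The crux is to show that $A$ cannot be too large relative to $B$ and $C$: a long run of degree-$2$ vertices along a boundary path forces the label along that path to coincide with a long subword of the cyclic relator $(u_r^n)$, which by Convention~\ref{convention}(3) must be a product of as many pieces as the number of edges. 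By Lemma~\ref{lem:maximal_piece2}(2), once such a run has at least $4n-1$ edges the corresponding subword contains a forbidden subword $w'$ with $S(w')=((2n-1)\langle S_1,S_2\rangle,\ell)$ or $(\ell,(2n-1)\langle S_2,S_1\rangle)$; since this subword sits inside $(\phi(\alpha))\equiv(u_s)$ or $(\phi(\delta))\equiv(u_{s'}^{\pm1})$, Lemma~\ref{lem:maximal_piece2}(3) says $CS(\phi(\alpha))$ or $CS(\phi(\delta))$ contains the subsequence prohibited by hypothesis~(ii). This contradiction bounds the length of each maximal degree-$2$ run on $\partial M$ by $4n-2$ edges, hence by $4n-3$ interior degree-$2$ vertices between consecutive vertices of degree $\ge 3$.

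Next I would feed this local bound back into the counting. Between any two consecutive ``non-degree-$2$'' vertices (degree $\ge 3$) along a boundary component we get at most $4n-3$ degree-$2$ vertices; summing over all such gaps on both boundary circles gives $A\le (4n-3)(B+C)$ at the crudest level, but we need the sharper comparison against $A\ge(4n-3)B+(4n-4)\lceil C/2\rceil+4n$. Plugging the upper bound into the lower bound of Proposition~\ref{prop:key}(2) and simplifying should force $C$ to be small and pin down the structure: the slack term $4(1-h)n=0$ since $h=1$ disappears only when every inequality in the proof of Proposition~\ref{prop:key} is tight, which is exactly the content of Remark~\ref{rem:big_degree_vertex}. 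So I expect the argument to show that equality must hold everywhere in Proposition~\ref{prop:key}(2), whence by Remark~\ref{rem:big_degree_vertex} we get: no vertices of degree $1$ and every vertex in $\partial M$ (giving~(2)); every boundary vertex has degree $2$, $3$, or $4$; and $d_M(D)=4n$ for every face (half of~(4)). To upgrade ``degree $2,3,4$'' to ``degree $2$ or $4$'' as in~(3), I would argue that a degree-$3$ boundary vertex would break the parity/symmetry of the two symmetric halves $S_1,S_2$ of the relator forced by the tightness, or more directly use that at a degree-$3$ vertex the boundary label forces an $S$-sequence incompatible with Lemma~\ref{lem:properties} unless the extra edge is absorbed — a case analysis on the three edges meeting there, using Lemma~\ref{lem:maximal_piece} on which subwords can be pieces.

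For part~(1), simplicity of $\sigma$ and $\tau$ and the absence of a shared edge, I would argue by contradiction: a non-simple boundary (a cut vertex, or a spur/edge traversed twice) would create a sub-configuration violating the degree bounds just established, or would let one apply the $[4,4n]$-map inequality to a piece of $M$ with more holes, contradicting tightness. Concretely, if $\sigma\cap\tau$ contained an edge, or if $\sigma$ were not simple, one can cut $M$ along it to produce a map whose Euler-characteristic bookkeeping contradicts the already-forced equality in Proposition~\ref{prop:key}; alternatively an edge in $\sigma\cap\tau$ would be a boundary edge on both sides, and Convention~\ref{convention}(2) plus the run-length bound would be violated where that edge meets the rest of $\partial M$. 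The main obstacle I anticipate is part~(3): ruling out degree-$3$ vertices on the boundary. The inequality counting only yields ``degree $\le 4$'', and excluding degree $3$ genuinely needs the fine structure of $u_r$ — specifically that the decomposition $u_r\equiv v_1v_2v_3v_4$ with the symmetric blocks $S_1,S_2$ (Lemma~\ref{lem:sequence}) constrains precisely which concatenations of pieces can occur, via Lemma~\ref{lem:maximal_piece}. I would handle this by a careful case analysis of the boundary label in a neighborhood of a hypothetical degree-$3$ vertex, showing it would force either a forbidden subsequence in $CS$ (contradicting~(ii)) or a violation of the piece structure; and the fact that, in the equality case, between degree-$4$ vertices there are exactly $4n-3$ degree-$2$ vertices (the first clause of~(4)) falls out once~(3) holds by redoing the count with $C=0$.
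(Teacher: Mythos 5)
Your overall strategy (small cancellation via Corollary~\ref{small_cancellation_condition_2}, the run-length bound on consecutive degree-$2$ boundary vertices coming from Convention~\ref{convention}(3), Lemma~\ref{lem:maximal_piece2}(2)--(3) and hypothesis~(ii), and then the counting of Proposition~\ref{prop:key}(2) pushed to the equality case so that Remark~\ref{rem:big_degree_vertex} applies) is exactly the paper's strategy. But there is a genuine gap at the point you yourself flag as the main obstacle, and it is fatal to the chain of deductions as you set it up. The exclusion of degree-$3$ (and degree-$1$) boundary vertices is not a consequence of the counting, nor of the symmetry of $S_1,S_2$, nor of a case analysis with Lemma~\ref{lem:maximal_piece}: in the paper it comes for free, \emph{before} any counting, from the fact that the relator $u_r^n$ and the boundary labels $(u_s)$, $(u_{s'}^{\pm 1})$ are all cyclically alternating words in $\{a,b\}$, which forces every boundary vertex to have degree $2$ or at least $4$ (this is the content of the sentence ``By assumption (i), there is no vertex of degree $1$ nor $3$ in $\partial M$'' in the proof of Lemma~\ref{lem:degree2_vertices}). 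Without knowing $C=0$ in advance, your comparison of $A\le (4n-3)(B+C)$ with $A\ge (4n-3)B+(4n-4)\lceil C/2\rceil$ (the constant term vanishes since $h=1$) produces neither a contradiction nor forced equality: one checks that $(4n-3)C\ge(4n-4)\lceil C/2\rceil$ for every $C\ge 0$ and $n\ge2$, so the two inequalities are compatible for all values of $C$, and the step ``equality must hold everywhere, hence Remark~\ref{rem:big_degree_vertex} applies'' does not go through. Once $C=0$ is known, the paper gets equality by the sharper per-component bounds $A_1\le(4n-3)B_1$ and $A_2\le(4n-3)B_2$ on $\sigma$ and $\tau$ separately, which is what you need to make your argument rigorous.

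A second substantive omission is the case $\sigma\cap\tau\neq\emptyset$. Your counting is carried out on the annular map with $h=1$ and implicitly assumes the two-layer picture, but conclusions (2) and (3) must also be proved when the two boundaries share vertices (sharing an edge has to be excluded separately). The paper handles this by cutting $M$ at the shared vertices into simply connected submaps (or cutting open at a single shared vertex $v_0$ into $v_0',v_0''$) and then proving, via three further counting claims, that such a piece consists of a single face; this is a different argument from the equality analysis and is not recoverable from your sketch. Likewise, simplicity of $\sigma$ and $\tau$ and the non-existence of an edge in $\sigma\cap\tau$ are proved in the paper by extremal-disk style arguments with a case analysis on the degrees of the one or two attaching vertices (Lemma~\ref{lem:degree2_vertices} and Case~2(1) of the proof), applied \emph{before} the structural conclusions are available; your plan to derive them from ``the degree bounds just established'' is circular, since those bounds are exactly what the equality case was supposed to give you.
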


Before proving Theorem~\ref{thm:annular_structure}, we prepare the following lemma.

\begin{lemma}
\label{lem:degree2_vertices}
Under the assumption of Theorem~{\rm \ref{thm:annular_structure}},
the outer and inner boundaries $\sigma$ and $\tau$ are simple.
\end{lemma}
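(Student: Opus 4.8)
The plan is to argue by contradiction and to use the small‑cancellation machinery already assembled. Suppose that the outer boundary $\sigma$ is not simple; the inner boundary $\tau$ will be handled identically with $s'$ in place of $s$. The idea is to locate inside $M$ a reduced nontrivial \emph{disk} subdiagram whose boundary label is a subword of the cyclic word $(u_s)$, and then to produce inside it a subword of $(u_s)$ forbidden by hypothesis~(ii) of Theorem~\ref{thm:annular_structure}. For the reduction step: since $(\phi(\alpha))\equiv(u_s)$ and $u_s$ is cyclically reduced, every outer boundary cycle of $M$ reads a cyclic permutation of $u_s$, so $\sigma$ has no spur and no immediate backtrack. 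Hence, if $\sigma$ is not simple, some outer boundary cycle passes through a vertex $v$ at least twice, and splitting it at $v$ produces two closed subpaths, exactly one of which, say $\sigma_1$, bounds a subcomplex $M_0$ of $M$ not containing the hole. Then $M_0$ is a disk subdiagram with $\partial M_0=\sigma_1\subseteq\sigma$, and $\phi(\partial M_0)$ is a nonempty reduced proper subword of a cyclic permutation of $u_s$; were $M_0$ trivial, this word would equal $1$ in the free group $F(a,b)$, which is impossible, so $M_0$ is nontrivial. Choosing $\sigma_1$ innermost among all such subpaths, we may further assume $\partial M_0$ is a simple closed curve, while $\phi(\partial M_0)$ is still a subword of $(u_s)$.

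Next I analyze $M_0$ with small‑cancellation theory. By Corollary~\ref{small_cancellation_condition_2}, $M_0$ is a reduced $[4,4n]$‑map, and since it is a disk, Lemma~\ref{lem:inequality} (with $h=0$) and Proposition~\ref{prop:key} force $\partial M_0$ to carry a controlled, large supply of degree‑$2$ vertices. Combining this with the fact that pieces of $R$ are short relative to $u_r$ (Lemma~\ref{lem:maximal_piece}) and with Convention~\ref{convention}, one shows that $M_0$ has a boundary face $D$ whose exterior arc $\partial D\cap\partial M_0$ runs through at least $4n-2$ consecutive vertices of degree $2$. Let $\gamma$ be a subpath of this arc consisting of exactly $4n-1$ edges; all of its edges border the single face $D$, so $\phi(\gamma)$ is a subword of a cyclic permutation of $u_r^{\pm n}$, hence a subword of the cyclic word $(u_r^{n})$ (after replacing $\phi(\gamma)$ by its inverse if the boundary label of $D$ is a conjugate of $u_r^{-n}$, using that $R$ is symmetrized). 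Each edge of $\gamma$ has a piece as its label by Convention~\ref{convention}(2), and by Convention~\ref{convention}(3) the word $\phi(\gamma)$ cannot be expressed as a product of fewer than $4n-1$ pieces.

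Thus $\phi(\gamma)$ is a subword of $(u_r^{n})$ which is a product of $4n-1$ pieces but not of fewer, so by Lemma~\ref{lem:maximal_piece2}(2) it contains a subword $w'$ with $S(w')=((2n-1)\langle S_1,S_2\rangle,\ell)$ or $S(w')=(\ell,(2n-1)\langle S_2,S_1\rangle)$ for some $\ell\in\ZZ_+$. On the other hand $\gamma\subseteq\partial M_0\subseteq\sigma$, so $\phi(\gamma)$, and hence $w'$, is a subword of the cyclic word $(\phi(\alpha))\equiv(u_s)$. Applying Lemma~\ref{lem:maximal_piece2}(3) to the rational number $s$ then shows that $CS(s)=CS(\phi(\alpha))$ contains $((2n-2)\langle m\rangle)$ as a subsequence when $k=1$, and contains $((2n-1)\langle S_1,S_2\rangle)$ or $((2n-1)\langle S_2,S_1\rangle)$ as a subsequence when $k\ge2$; this contradicts hypothesis~(ii) of Theorem~\ref{thm:annular_structure}. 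Hence $\sigma$ is simple. The identical argument applied to $\tau$ and an inner boundary cycle $\delta$ produces a subword of $(u_{s'}^{\pm1})$ forbidden by~(ii); here one uses that $CS(\phi(\delta))$ equals $CS(s')$ or its reverse and that the forbidden subsequences are invariant under reversal, because the $S_i$ are symmetric by Lemma~\ref{lem:sequence}(1). Therefore $\tau$ is simple as well.

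The main obstacle is the extraction in the second paragraph: the averaging estimate read off directly from Proposition~\ref{prop:key} controls only the \emph{average} length of a maximal run of degree‑$2$ vertices along $\partial M_0$, and by itself is not strong enough to guarantee a run of length $\ge 4n-2$ lying inside a single face when $\partial M_0$ has many vertices of degree $3$. Overcoming this requires exploiting the rigidity of how $4n$‑gonal faces can be packed around $\partial M_0$, which in turn rests on the bound on the lengths of pieces of $R$ in Lemma~\ref{lem:maximal_piece}; this is precisely the type of argument already carried out in \cite{lee_sakuma_7}. Everything else — the innermost‑cut reduction, the inversions needed to reduce to $(u_r^n)$, and the reversal‑invariance used for $\tau$ — is routine bookkeeping.
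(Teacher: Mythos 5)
Your overall strategy coincides with the paper's: cut off a simply connected subdiagram at a repeated boundary vertex, and play hypothesis (ii) of Theorem~\ref{thm:annular_structure} (via Lemma~\ref{lem:maximal_piece2}(2)--(3) and Convention~\ref{convention}(2)--(3)) against the $[4,4n]$ counting of Proposition~\ref{prop:key}. But the pivotal quantitative step is not carried out. Your second paragraph asserts that the disk subdiagram $M_0$ has a boundary face whose exterior arc passes through at least $4n-2$ consecutive degree-$2$ vertices, and you yourself flag this as ``the main obstacle,'' deferring it to a face-packing rigidity argument ``of the type carried out in \cite{lee_sakuma_7}.'' Since the entire contradiction rests on producing that run, and you neither prove it nor cite a statement that yields it, this is a genuine gap as written.

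The gap closes once you use hypothesis (i), which you never invoke at this point: because the relator $u_r^n$ and the boundary labels $(u_s)$, $(u_{s'}^{\pm1})$ are cyclically alternating, $\partial M$ has no vertex of degree $1$ or $3$. Hence, on the boundary of the extremal disk $J$ (the paper's version of your $M_0$, attached to the rest of $M$ at a single vertex $v_0$), every vertex other than $v_0$ has degree $2$ or at least $4$. If no $4n-2$ degree-$2$ vertices occurred consecutively on $\partial J-\{v_0\}$, then $A\le(4n-3)(B+1)$, where $A$ and $B$ count the degree-$2$ and degree-$\ge 4$ vertices of $\partial J-\{v_0\}$; but Proposition~\ref{prop:key}(2) applied to $J$, with the short case analysis $d_J(v_0)=2,3,\ge4$ (the $\lceil C/2\rceil$ term absorbing the one possible degree-$3$ vertex $v_0$), gives $A\ge(4n-3)(B+1)+2$ in every case --- a contradiction. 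This pigeonhole is exactly the paper's proof, run in the direction opposite to yours (the paper deduces from (i) and (ii) that no long run exists and contradicts Proposition~\ref{prop:key}(2), rather than extracting a long run and contradicting (ii)); no packing rigidity and no appeal to \cite{lee_sakuma_7} are needed, and your worry about ``many vertices of degree $3$'' on $\partial M_0$ does not arise precisely because of (i). The rest of your outline --- each boundary edge labelled by a piece, Convention~\ref{convention}(3), Lemma~\ref{lem:maximal_piece2}(2)--(3), and the symmetric treatment of $\tau$ --- matches the paper and is fine.
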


\begin{proof}
Suppose on the contrary that $\sigma$ or $\tau$ is not simple.
Then there is an extremal disk, say $J$, which is properly
contained in $M$ and connected to the rest of $M$ by a single vertex,
say $v_0$.
Here, recall that an {\it extremal disk} of a map $M$
is a submap of $M$ which is topologically a disk
and which has a boundary cycle $e_1, \dots, e_t$
such that the edges $e_1, \dots, e_t$ occur in order
in some boundary cycle of the whole map $M$.
By Corollary~\ref{small_cancellation_condition_2},
$M$ is a connected annular $[4,4n]$-map.
Then $J$ is a connected and simply-connected $[4,4n]$-map.

By assumption (i),
there is no vertex of degree $1$ nor $3$ in $\partial M$.
So every vertex in $\partial J-\{v_0\} \subseteq \partial M$ has degree $2$ or at least $4$.
Put
\[
\begin{aligned}
A= &\ \text{\rm the number of vertices $v$ in $\partial J-\{v_0\}$ such that $d_J(v)=2$;} \\
B= &\ \text{\rm the number of vertices $v$ in $\partial J-\{v_0\}$ such that $d_J(v) \ge 4$.}
\end{aligned}
\]
By assumptions (i) and (ii)
together with Lemma~\ref{lem:maximal_piece2}(3),
the word $\phi(\partial J |_{v_0})$
does not contain any subword of the cyclic word $(u_r^n)$
which is a product of $4n-1$ pieces
but is not a product of less than $4n-1$ pieces,
where $\partial J|_{v_0}$ denotes a boundary cycle of $J$
starting from the vertex $v_0$.
This implies by Convention~\ref{convention}(3)
that there are no $4n-2$ consecutive degree $2$ vertices in $\partial J-\{v_0\}$.
Hence we have $A \le (4n-3)(B+1)$.
We will derive a contradiction to this inequality
using Proposition~\ref{prop:key}(2) applied to $J$.

Clearly $d_J(v_0) \ge 2$.
First if $d_J(v_0)=2$, then
\[
A+1 \ge (4n-3)B+4n=(4n-3)(B+1)+3,
\]
so that $A \ge (4n-3)(B+1)+2$,
contrary to $A \le (4n-3)(B+1)$.
Next if $d_J(v_0)=3$, then
\[
\begin{aligned}
A &\ge (4n-3)B+(4n-4)\lceil 1/2 \rceil+4n=(4n-3)B+(4n-4)+4n \\
&=(4n-3)(B+1)+4n-1,
\end{aligned}
\]
contrary to $A \le (4n-3)(B+1)$.
Finally if $d_J(v_0) \ge 4$, then
\[
A \ge (4n-3)(B+1)+4n,
\]
contrary to $A \le (4n-3)(B+1)$.
\end{proof}

\begin{proof}[Proof of Theorem~{\rm \ref{thm:annular_structure}}]
Arguing as in the proof of Lemma~\ref{lem:degree2_vertices},
$M$ is a connected annular $[4,4n]$-map
such that every vertex in $\partial M$ has degree $2$ or at least $4$.
We divide into two cases.

\medskip
\noindent {\bf Case 1.} $\sigma \cap \tau= \emptyset$.
\medskip

In this case, (1) follows immediately
from Lemma~\ref{lem:degree2_vertices}.

(2)--(4)
Put
\[
\begin{aligned}
A_1= &\ \text{\rm the number of vertices $v$ in $\sigma$ such that $d_M(v)=2$;} \\
A_2= &\ \text{\rm the number of vertices $v$ in $\tau$ such that $d_M(v)=2$;}\\
B_1= &\ \text{\rm the number of vertices $v$ in $\sigma$ such that $d_M(v) \ge 4$;} \\
B_2= &\ \text{\rm the number of vertices $v$ in $\tau$ such that $d_M(v) \ge 4$.} \\
\end{aligned}
\]
Since $\partial M$ is the disjoint union of $\sigma$ and $\tau$,
Proposition~\ref{prop:key}(2) applied to $M$ yields
\[
A_1+A_2 \ge (4n-3)(B_1+B_2)=(4n-3)B_1+(4n-3)B_2.
\]
Here, we observe that there are no $4n-2$ consecutive degree $2$ vertices on
$\sigma$ nor on $\tau$.
In fact, if this is not the case,
then by Convention~\ref{convention}(3),
$(\phi(\alpha))$ or $(\phi(\delta))$
contains a subword, $w$, which is a product of $4n-1$ pieces
but is not a product of $t$ pieces with $t<4n-1$.
But, this contradicts
assumptions (i) and (ii)
by Lemma~\ref{lem:maximal_piece2}(3).
Hence we have $A_1 \le (4n-3)B_1$ and $A_2 \le (4n-3)B_2$.
Thus the above inequality is actually an equality.
By Remark~\ref{rem:big_degree_vertex},
every vertex of $M$ lies in $\partial M$,
every vertex in $\partial M$ has degree $2$ or $4$,
and $d_M(D)=4n$ for every face $D$ of $M$.
Moreover, since $A_1=(4n-3)B_1$ and $A_2=(4n-3)B_2$
and since there are no $4n-2$ consecutive degree $2$ vertices on both $\sigma$ and $\tau$,
between any two vertices of degree $4$
there should occur exactly $4n-3$ vertices of degree $2$
on both $\sigma$ and $\tau$.
Therefore (2)--(4) hold.

\medskip
\noindent {\bf Case 2.} $\sigma \cap \tau \neq \emptyset$.
\medskip

(1) Suppose on the contrary that $\sigma \cap \tau$ contains an edge.
As illustrated in Figure~\ref{fig.island},
there is a submap $J$ of $M$ such that
\begin{enumerate}[\indent \rm (i)]
\item $J$ is bounded by a simple
closed path of the form $\sigma_1 \tau_1$, where $\sigma_1 \subseteq \sigma$
and $\tau_1 \subseteq \tau$;

\item $J$ is connected to the rest of $M$ by two distinct vertices, say $v_1$ and $v_2$,
where $\sigma_1 \cap \tau_1=\{v_1, v_2 \}$
and $v_1$ is an endpoint of an edge contained in $\sigma \cap \tau$.
Note that $d_J(v_1)=d_M(v_1)-1 \ge 3$ and $d_J(v_2)\ge 2$.
\end{enumerate}
Then $J$ is a connected and simply connected $[4,4n]$-map
such that every vertex in $\partial J-\{v_1, v_2\}$ has degree $2$ or at least $4$.
Put
\[
\begin{aligned}
A= &\ \text{\rm the number of vertices $v$ in $\partial J-\{v_1,v_2\}$ such that $d_J(v)=2$;}\\
B= &\ \text{\rm the number of vertices $v$ in $\partial J-\{v_1,v_2\}$ such that $d_J(v) \ge 4$.}
\end{aligned}
\]
Arguing as in Case~1,
there are no $4n-2$ consecutive degree $2$ vertices
on $\sigma_1-\{v_1, v_2\}$ nor on $\tau_1-\{v_1, v_2\}$.
Hence we have $A \le (4n-3)(B+2)$.
We will derive a contradiction to this inequality
using Proposition~\ref{prop:key}(2) applied to $J$.

First if $d_J(v_1)=3$ and $d_J(v_2)=2$, then
\[
\begin{aligned}
A+1 & \ge (4n-3)B+(4n-4)\lceil 1/2 \rceil+4n=(4n-3)B+(4n-4)+4n \\
& =(4n-3)(B+2)+2,
\end{aligned}
\]
so that $A \ge (4n-3)(B+2)+1$,
contrary to $A \le (4n-3)(B+2)$.
Second if $d_J(v_1) \ge 4$ and $d_J(v_2)=2$, then
\[
A+1 \ge (4n-3)(B+1)+4n=(4n-3)(B+2)+3,
\]
so that $A \ge (4n-3)(B+2)+2$,
contrary to $A \le (4n-3)(B+2)$.
Third if $d_J(v_1)=3$ and $d_J(v_2)=3$, then
\[
\begin{aligned}
A & \ge (4n-3)B+(4n-4)\lceil 1 \rceil+4n=(4n-3)B+(4n-4)+4n \\
& =(4n-3)(B+2)+2,
\end{aligned}
\]
contrary to $A \le (4n-3)(B+2)$.
Fourth either if $d_J(v_1)\ge 4$ and $d_J(v_2)=3$
or if $d_J(v_1)=3$ and $d_J(v_2)\ge 4$, then
\[
\begin{aligned}
A & \ge (4n-3)(B+1)+(4n-4)\lceil 1/2 \rceil+4n=(4n-3)(B+1)+(4n-4)+4n \\
& =(4n-3)(B+2)+4n-1,
\end{aligned}
\]
contrary to $A \le (4n-3)(B+2)$.
Finally if $d_J(v_1)\ge 4$ and $d_J(v_2) \ge 4$, then
\[
A \ge (4n-3)(B+2)+4n,
\]
contrary to $A \le (4n-3)(B+2)$.

\begin{figure}[h]
\begin{center}
\includegraphics{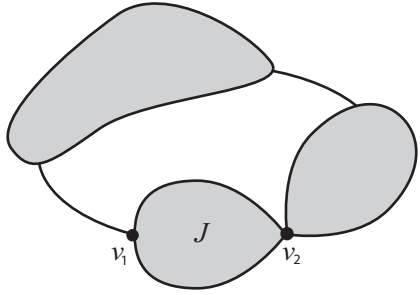}
\end{center}
\caption{\label{fig.island}
A possible annular map $M$ when $\sigma \cap \tau$ contains an edge.}
\end{figure}

(2)--(3) By (1), $\sigma \cap \tau$ consists of finitely many vertices in $M$.
First suppose that $\sigma \cap \tau$ consists of at least two vertices,
say $v_1, \dots, v_t$, where $t \ge 2$ and where these vertices are indexed
so that there is a submap $J_i$ of $M$ for every $i=1, \dots, t$ such that
\begin{enumerate}[\indent \rm (i)]
\item $J_i$ is bounded by a simple
closed path of the form $\sigma_i \tau_i$, where $\sigma_i \subseteq \sigma$
and $\tau_i \subseteq \tau$;

\item $J_i$ is connected to the rest of
$M$ by two distinct vertices, say $v_i$ and $v_{i+1}$,
where $\sigma_i \cap \tau_i=\{v_i, v_{i+1} \}$ and where
$d_{J_i}(v_i), d_{J_i}(v_{i+1}) \ge 2$ and
$d_{J_i}(v_{i+1})+d_{J_{i+1}}(v_{i+1})=d_M(v_{i+1})$
(taking the indices modulo $n$).
\end{enumerate}
Then each $J_i$ is a connected and simply connected $[4, 4n]$-map
such that $M=J_1 \cup \cdots \cup J_{t}$.
Moreover $\sigma=\sigma_1 \cup \cdots \cup \sigma_{t}$
and $\tau=\tau_1 \cup \cdots \cup \tau_{t}$.
The same argument as for $(M', v_0', v_0'')$ below applies to each
$(J_i, v_i, v_{i+1})$ to prove the assertions.

Next suppose that $\sigma \cap \tau$ consists of a single vertex,
say $v_0$. Cut $M$ open at $v_0$ to get a connected and simply connected
$[4,4n]$-map $M'$. In this process, the vertex $v_0$ is separated into two distinct vertices,
say $v_0'$ and $v_0''$, in $M'$ such that $d_{M'}(v_0'), d_{M'}(v_0'') \ge 2$
and $d_{M'}(v_0')+d_{M'}(v_0'')=d_M(v_0)$. Then $M'$ is bounded by a simple
closed path of the form $\sigma_0 \tau_0$, where $\sigma_0 \cap \tau_0=\{v_0', v_0''\}$.
Put
\[
\begin{aligned}
F= &\ \text{\rm the number of faces of $M'$;} \\
A= &\ \text{\rm the number of vertices $v$ in $\partial M'-\{v_0', v_0''\}$ such that $d_{M'}(v)=2$;} \\
B= &\ \text{\rm the number of vertices $v$ in $\partial M'-\{v_0', v_0''\}$ such that $d_{M'}(v) \ge 4$.}
\end{aligned}
\]

\medskip
\noindent {\bf Claim 1.} $d_{M'}(v_0')=d_{M'}(v_0'')=2$.

\begin{proof}[Proof of Claim~{\rm 1}]
Since every vertex in $\partial M'-\{v_0', v_0''\}$ has degree $2$ or at least $4$
and since $4n-2$ vertices of degree $2$ do not occur consecutively
on $\sigma_0-\{v_0', v_0''\}$ nor on $\tau_0-\{v_0', v_0''\}$,
we have $A \le (4n-3)(B+2)$.
Suppose on the contrary that $d_{M'}(v_0') \ge 3$ or $d_{M'}(v_0'') \ge 3$.
Without loss of generality, assume that $d_{M'}(v_0') \ge 3$.
Then repeating the same arguments as in the proof of (1)
replacing $J$, $v_1$ and $v_2$
with $M'$, $v_0'$ and $v_0''$, respectively, we obtain a
contradiction to $A \le (4n-3)(B+2)$.
\end{proof}

By Claim~1, $d_{M'}(v_0')=d_{M'}(v_0'')=2$.
Hence there exist unique $2$-cells $D_1$ and $D_2$ in $M'$
such that $v_0' \in \partial D_1$ and $v_0'' \in \partial D_2$.

\medskip
\noindent {\bf Claim 2.} $D_1=D_2$.

\begin{proof}[Proof of Claim~{\rm 2}]
Suppose on the contrary that $D_1 \neq D_2$.
Then note that the number of vertices of degree $2$ on
$(\partial D_1 \cup \partial D_2) \cap (\sigma_0 \cup \tau_0) -\{v_0', v_0''\}$
is less than or equal to $d_{M'}(D_1)+ d_{M'}(D_2)-6$.
Since every vertex in $\partial M'-\{v_0', v_0''\}$ has degree $2$ or at least $4$
and since $4n-2$ vertices of degree $2$ do not occur consecutively
on $\sigma_0-\{v_0', v_0''\}$ nor on $\tau_0-\{v_0', v_0''\}$,
we have
\[
A \le (4n-3)(B-2)+d_{M'}(D_1)+ d_{M'}(D_2)-6.
\]

On the other hand, by Lemma~\ref{lem:inequality},
\[
\begin{aligned}
4 &\le \sum_{v \in \partial M'} (3-d_{M'}(v)) + \sum_{D \in M'} (4-d_{M'}(D)) \\
& \le \sum_{v \in \partial M'} (3-d_{M'}(v)) +\sum_{D \in M'-\{D_1, D_2\}} (4-4n)+(4-d_{M'}(D_1))+(4-d_{M'}(D_2)),
\end{aligned}
\]
so that
\[
4+\sum_{D \in M'-\{D_1, D_2\}} (4n-4)
\le \sum_{v \in \partial M'} (3-d_{M'}(v))+(4-d_{M'}(D_1))+(4-d_{M'}(D_2)).
\]
By Proposition~\ref{prop:key}(1),
\[
(B-1)(4n-4) \le (F-2)(4n-4) = \sum_{D \in M'-\{D_1, D_2\}} (4n-4);
\]
hence
\[
4+(B-1)(4n-4) \le \sum_{v \in \partial M'} (3-d_{M'}(v))+(4-d_{M'}(D_1))+(4-d_{M'}(D_2)).
\]
Note that $d_{M'}(v_0')=d_{M'}(v_0'')=2$ and so
$\sum_{v \in \partial M'} (3-d_{M'}(v)) \le A+2-B$.
Hence the above inequality implies
\[
4+(B-1)(4n-4) \le (A+2-B)+(4-d_{M'}(D_1))+(4-d_{M'}(D_2)).
\]
Thus
\[
\begin{aligned}
A & \ge (B-1)(4n-4)+B-6+d_{M'}(D_1)+d_{M'}(D_2) \\
&=(4n-3)(B-2)+d_{M'}(D_1)+ d_{M'}(D_2)+4n-8,
\end{aligned}
\]
contrary to $A \le (4n-3)(B-2)+d_{M'}(D_1)+ d_{M'}(D_2)-6$.
\end{proof}

\noindent {\bf Claim 3.} $B=0$.

\begin{proof}[Proof of Claim~{\rm 3}]
Suppose on the contrary that $B \ge 1$.
Then note that the number of vertices of degree $2$ in
$\partial D_1 \cap (\sigma_0 \cup \tau_0)-\{v_0', v_0''\}$
is less than or equal to $d_{M'}(D_1)-4$.
Since every vertex in $\partial M'-\{v_0', v_0''\}$ has degree $2$ or at least $4$
and since $4n-2$ vertices of degree $2$ do not occur consecutively
on $\sigma_0-\{v_0', v_0''\}$ nor on $\tau_0-\{v_0', v_0''\}$,
we have
\[
A \le (4n-3)(B-1)+d_{M'}(D_1)-4.
\]
On the other hand, by Lemma~\ref{lem:inequality}, we have
\[
\begin{aligned}
4 &\le \sum_{v \in \partial M'} (3-d_{M'}(v)) + \sum_{D \in M'} (4-d_{M'}(D)) \\
& \le \sum_{v \in \partial M'} (3-d_{M'}(v)) +\sum_{D \in M'-\{D_1\}} (4-4n)+(4-d_{M'}(D_1)),
\end{aligned}
\]
so that
\[
4+\sum_{D \in M'-\{D_1\}} (4n-4)
\le \sum_{v \in \partial M'} (3-d_{M'}(v))+(4-d_{M'}(D_1)).
\]
By Proposition~\ref{prop:key}(1),
\[
B(4n-4) \le (F-1)(4n-4) = \sum_{D \in M'-\{D_1\}} (4n-4);
\]
hence
\[
4+B(4n-4) \le \sum_{v \in \partial M'} (3-d_{M'}(v))+(4-d_{M'}(D_1)).
\]
Here, since $d_{M'}(v_0')=d_{M'}(v_0'')=2$,
we have $\sum_{v \in \partial M'} (3-d_{M'}(v)) \le A+2-B$, so that
\[
4+B(4n-4) \le (A+2-B)+(4-d_{M'}(D_1)).
\]
Thus
\[
A \ge B(4n-4)+B-2+d_{M'}(D_1)=(4n-3)(B-1)+d_{M'}(D_1)+4n-5,
\]
contrary to
$A \le (4n-3)(B-1)+d_{M'}(D_1)-4$.
\end{proof}

By Claim~3, $M$ consists of only one $2$-cell,
thus proving (2) and (3).
\end{proof}

We define the {\it outer boundary layer} of
an annular map $M$ to be the submap of $M$
consisting of all faces $D$
such that the intersection of $\partial D$ with the outer boundary, $\sigma$,
of $M$ contains an edge,
together with the edges and vertices contained in $\partial D$.
The {\it inner boundary layer} of $M$ is defined similarly by
using the inner boundary, $\tau$, of $M$.

\begin{corollary}
\label{cor:structure}
Let $M$ be a reduced nontrivial annular
diagram over $\Hecke(r;n)=\langle a, b \svert u_r^n \rangle$
satisfying the assumptions of Theorem~{\rm \ref{thm:annular_structure}}.
Then Figure~{\rm \ref{fig.layer}} illustrates the only two possible shapes of $M$.
In particular, the following hold.
\begin{enumerate} [\indent \rm (1)]
\item
If $\sigma\cap\tau\ne\emptyset$,
then $M$ consists of a single layer,
namely, the outer and inner boundary layers coincide.
Moreover,
the number of faces of $M$ is equal to the number of degree $4$ vertices of $M$.
Here the number of faces is variable.
\item
If $\sigma\cap\tau=\emptyset$, then $M$ consists of two layers,
namely, the intersection of the outer and inner boundary layers of $M$ is a circle,
and $M$ is the union of these two layers along the circle.
Moreover,
every vertex of $M$ lies in $\partial M$, and
the number of faces of the outer {\rm (}inner, respectively{\rm )} boundary layer is equal to the number of degree
$4$ vertices contained in $\sigma$ {\rm (}$\tau$, respectively{\rm )}.
Here the number of faces per layer is variable.
\end{enumerate}
\end{corollary}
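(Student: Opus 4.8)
The plan is to read off both pictures directly from Theorem~\ref{thm:annular_structure}, distinguishing the cases $\sigma\cap\tau\ne\emptyset$ and $\sigma\cap\tau=\emptyset$, and in each case combining the qualitative conclusions (2)--(4) of that theorem with a little extra combinatorics on how the faces fit together.

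For $\sigma\cap\tau\ne\emptyset$ I would re-examine Case~2 of the proof of Theorem~\ref{thm:annular_structure}, which already exhibits $M$ as a cyclic union of submaps $J_1,\dots,J_t$, where $t$ is the number of vertices of $\sigma\cap\tau$, each $J_i$ is a single $2$-cell, $J_i$ is bounded by a simple closed path $\sigma_i\tau_i$ with $\sigma_i\subseteq\sigma$ and $\tau_i\subseteq\tau$, and $J_i$ meets $J_{i\pm1}$ in exactly one vertex of $\sigma\cap\tau$. Since the two endpoints of $\sigma_i$ (equivalently of $\tau_i$) are distinct, both $\sigma_i$ and $\tau_i$ contain an edge, so the unique face of $J_i$ lies in both the outer and the inner boundary layer; hence $L_\sigma=L_\tau=M$ and $M$ is a single layer. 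The argument of Claim~1 in that proof, applied to each $(J_i,v_i,v_{i+1})$, gives degree $2$ at each gluing vertex inside the two adjacent $J$'s, so every vertex of $\sigma\cap\tau$ has degree $2+2=4$ in $M$, while by Theorem~\ref{thm:annular_structure}(2),(3) every other vertex has degree $2$. Thus the number of degree-$4$ vertices equals $t$, the number of faces, and $t$ is the free parameter; this is assertion (1).

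For $\sigma\cap\tau=\emptyset$ I would use the full strength of Theorem~\ref{thm:annular_structure}(2)--(4): every vertex lies on $\partial M$, has degree $2$ or $4$, between consecutive degree-$4$ vertices of $\sigma$ (resp.\ of $\tau$) there are exactly $4n-3$ degree-$2$ vertices, and every face has degree $4n$. The local key is that at a degree-$2$ vertex $v\in\sigma$ both incident edges lie on $\sigma$, since $\sigma$ already occupies both edge-ends at $v$; likewise on $\tau$. Hence, if the boundary cycle of a face $D$ meets $\sigma$ in an edge, the maximal subpath of $\partial D$ on $\sigma$ cannot terminate at a degree-$2$ vertex, so it is a full arc from one degree-$4$ vertex to the next, of length $4n-2$; the rest of $\partial D$ then has only $2$ edges, leaving no room for a second $\sigma$-arc nor for any $\tau$-arc. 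So each face meets at most one of $\sigma,\tau$, and when it meets one it does so in a single standard arc; in particular no face lies in both boundary layers, and (since a degree-$2$ vertex has both its edges on $\partial M$) no face can have all its boundary edges interior, so every face meets $\sigma$ or $\tau$. Consequently the faces meeting $\sigma$ are cyclically arranged around $\sigma$ and form an annular submap $L_\sigma$ with one face per $\sigma$-arc, i.e.\ with as many faces as there are degree-$4$ vertices on $\sigma$; similarly for $L_\tau$; the two layers share no face, $M=L_\sigma\cup L_\tau$, and the locus where they meet --- the inner boundary of the outer layer, equivalently the outer boundary of the inner layer --- is a single circle along which $M$ is their union. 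Together with Theorem~\ref{thm:annular_structure}(2) this gives assertion (2), the free parameter being the number of faces per layer.

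The main obstacle is precisely the bookkeeping in the disjoint case: one must rule out a face touching both boundary components and a face touching neither, and then verify that the two families of faces genuinely assemble into two annular layers glued along exactly one circle, rather than into some more intricate configuration. All of this is forced by the rigid numerology ($d_M(D)=4n$, the $4n-3$ spacing, and the absence of interior vertices) supplied by Theorem~\ref{thm:annular_structure}, but turning that rigidity into the clean necklace picture of Figure~\ref{fig.layer} is where the care is needed.
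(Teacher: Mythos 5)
Your overall route is the same as the paper's implicit one: the corollary is meant to be read off from Theorem~\ref{thm:annular_structure} together with its proof, and your treatment of the case $\sigma\cap\tau\ne\emptyset$ does exactly that correctly --- Case~2 of the theorem's proof shows that each submap $J_i$ (or $M'$, when $\sigma\cap\tau$ is a single vertex, a subcase you should mention) is a single $2$-cell with $d_{J_i}(v_i)=d_{J_i}(v_{i+1})=2$, so the gluing vertices are precisely the degree-$4$ vertices, their number equals the number of faces, and every face meets both $\sigma$ and $\tau$ in edges, i.e.\ the two layers coincide.

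In the case $\sigma\cap\tau=\emptyset$, however, two of your steps are not actually proved. First, your parenthetical reason for ``no face can have all its boundary edges interior'' does not work: since every vertex lies on $\partial M$ and has degree $2$ or $4$, a hypothetical face of this kind would simply pass only through degree-$4$ boundary vertices, using at each of them the two interior edge-ends; the observation that degree-$2$ vertices have both edges on $\partial M$ says nothing against this. The statement is true, but one needs a count: each degree-$4$ vertex of $\sigma$ (resp.\ $\tau$) carries exactly two interior edge-ends, so with $k_\sigma,k_\tau$ the numbers of degree-$4$ vertices one gets $V=(4n-2)(k_\sigma+k_\tau)$, $E=(4n-1)(k_\sigma+k_\tau)$, hence $F=k_\sigma+k_\tau$ from $V-E+F=0$, and these faces are already exhausted by the $k_\sigma+k_\tau$ faces carrying the full boundary arcs. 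Second, the claim that the two layers are glued along a \emph{single circle} is asserted rather than derived: nothing in your argument rules out that the two non-$\sigma$ edges of an outer-layer face return to a vertex of $\sigma$ instead of ending at a degree-$4$ vertex of $\tau$, which would destroy the necklace interface. Excluding this needs a further argument, e.g.\ such a configuration cuts off a simply connected submap attached to the rest of $M$ along one edge and two vertices, which contradicts Proposition~\ref{prop:key}(2) exactly as in Lemma~\ref{lem:degree2_vertices} and Case~2 of the theorem's proof; one then checks that every interior edge bounds one outer-layer and one inner-layer face, so the interior edges form a single alternating cycle. You flagged this assembly step yourself as ``where the care is needed,'' but the care is not supplied, and that, together with the faulty justification above, is the substantive gap in the proposal.
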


\begin{figure}[h]
\begin{center}
\includegraphics{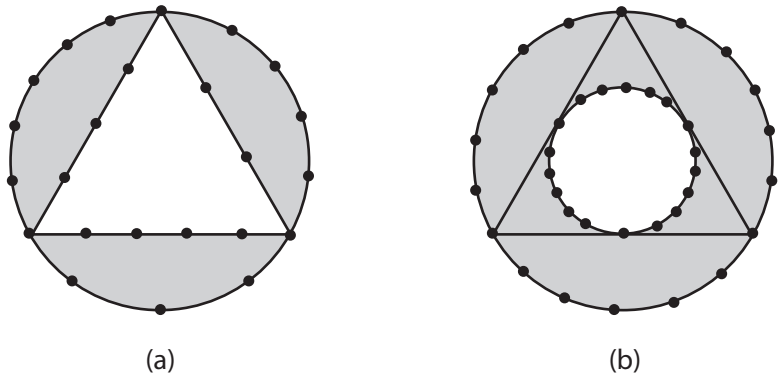}
\end{center}
\caption{\label{fig.layer}
Two possible shapes of $M$ when $n=2$}
\end{figure}

The following notation will be used in
Section~\ref{sec:proof of main theorem(1) for the case when $r=1/m$}
and \cite[Section~3]{lee_sakuma_10}.

\begin{notation}
\label{notation:cell_boundary}
{\rm
Suppose that $M$ is a connected annular map as in Figure~\ref{fig.layer},
and let $J$ be the outer boundary layer of $M$.
Choose a vertex, say $v_0$, lying in both the outer and inner boundaries of $J$,
and let $\alpha$ and $\beta$ be, respectively,
the outer and inner boundary cycles of $J$ starting from $v_0$,
where $\alpha$ is read clockwise and $\beta$ is read counterclockwise.
Let $D_1,\dots, D_t$ be the $2$-cells of $J$,
such that $\alpha$ goes through their boundaries in this order.
By the symbol $\partial D_i^{\pm}$, we denote an oriented edge path
contained in $\partial D_i$, such that
\[
\begin{aligned}
\alpha&=\partial D_1^+\cdots \partial D_t^+, \\
\beta^{-1}&=\partial D_1^-\cdots \partial D_t^-.
\end{aligned}
\]
}
\end{notation}

\section{Technical Lemmas}
\label{sec:technical_lemmas}

In the remainder of this paper,
we study the even Heckoid group $\Hecke(1/p;n)$,
where $p$ and $n$ are integers greater than $1$.
Recall that the region, $R$, bounded by a pair of
Farey edges with an endpoint $\infty$
and a pair of Farey edges with an endpoint $1/p$
forms a fundamental domain for the action of $\RGPC{1/p}{n}$ on $\HH^2$
(see Figure~\ref{fig.fd_orbifold}).
Let $I_1(1/p;n)$ and $I_2(1/p;n)$ be the (closed or half-closed) intervals in $\RR$
defined as follows:
\[
\begin{aligned}
I_1(1/p;n) &=[0, r_1), \ \mbox{where} \ r_1=[p, 2n-2], \\
I_2(1/p;n) &=[r_2, 1], \ \mbox{where} \ r_2=[p-1, 2].
\end{aligned}
\]
Then we may choose a fundamental domain $R$ so that
the intersection of $\bar R$ with $\partial \HH^2$ is equal to
the union $\bar I_1(1/p;n) \cup \bar I_2(1/p;n)\cup \{\infty,1/p\}$.

\begin{lemma}
\label{lem:connection}
For any rational number $s \in I_1(1/p;n) \cup I_2(1/p;n)$,
$CS(s)$ does not contain $((2n-2) \langle p \rangle)$ as a subsequence.
\end{lemma}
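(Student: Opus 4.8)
The plan is to analyze the cyclic $S$-sequences of the slopes lying in $I_1(1/p;n)$ and $I_2(1/p;n)$ directly, using the continued fraction descriptions $r_1=[p,2n-2]$ and $r_2=[p-1,2]$ together with the structural results of Section~\ref{group_presentation}. Since we are in the torus-link case $r=1/p$, we have $S(r)=(p,p)$ by Lemma~\ref{lem:properties}(1), so $m=p$ and $k=1$; the forbidden pattern $((2n-2)\langle p\rangle)$ is a run of $2n-2$ consecutive $p$'s in $CS(s)$. First I would treat the two intervals separately. For $s\in I_1(1/p;n)=[0,r_1)$ with $r_1=[p,2n-2]$, I would argue that $s$ can be written as a continued fraction $[p,\dots]$ beginning with $p$, and analyze how $CS(s)$ is built up: by Lemma~\ref{lem:properties}(2), every term of $S(s)$ is $p$ or $p+1$, and the maximal runs of $p$'s are controlled by the $T$-sequence, hence (via Lemma~\ref{lem:induction1}, $CS(\tilde r)=CT(r)$) by the tail $\tilde s$ of the continued fraction. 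The key point is that passing below the Farey edge bounding $I_1$ at $r_1=[p,2n-2]$ forces the relevant entries of the $T$-sequence — equivalently the terms of $CS(\tilde s)$ — to be at most $2n-3$ in the appropriate places, so that no run of $2n-2$ successive $p$'s can appear. Concretely, I expect that $s\in[0,r_1)$ forces $s=[p,m_2,\dots]$ with either $m_2\ge 2n-1$ excluded (that would push $s\ge r_1$) or the structure of $CS(\tilde s)$ bounding runs; in the case $m_2=1$ Corollary~\ref{cor:cor-to-relation}(1) and Lemma~\ref{lem:relation} give that the runs of $p$'s correspond to entries of $T(s)$, which are themselves bounded.

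For $s\in I_2(1/p;n)=[r_2,1]$ with $r_2=[p-1,2]$, I would similarly use that such $s$ has a continued fraction expansion beginning with $p-1$ (or is $1/(p-1)$-adjacent). Here I would invoke Lemma~\ref{lemma:natural-homeomprphisms} or the explicit Farey combinatorics: $s\in[r_2,1]$ means $1-s$ or a Möbius image of $s$ lands in a controlled region, and the $S$-sequence of such $s$ again consists of $p$'s and $p+1$'s but now the maximal run of $p$'s is bounded because $s\ge[p-1,2]$ prevents the long all-$p$ stretch that would correspond to passing the Farey edge at $r_2$. I would make this precise by computing $CS(r_2)$ from Lemma~\ref{presentation} (noting $[p-1,2]=2/(2p-1)$, whose $u_r$ is short and explicit) and showing that any $s$ with a longer run of $p$'s than $r_2$ permits would have slope $<r_2$, contradicting $s\in I_2$. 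Throughout, Remark~\ref{remark:recovering the slope} is the bookkeeping tool: the slope is recovered from the sums of the terms and lengths of $S_1,S_2$, which lets me convert statements about run-lengths in $CS(s)$ into inequalities on $s$ itself.

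The main obstacle, I expect, is the boundary bookkeeping at the endpoints $r_1=[p,2n-2]$ and $r_2=[p-1,2]$: one must check that the half-open nature of the intervals ($I_1$ excludes $r_1$, $I_2$ includes $r_2$) is exactly what makes the statement true, i.e., that $CS(r_1)$ does contain $((2n-2)\langle p\rangle)$ (so $r_1$ must be excluded) while $CS(r_2)$ does not contain it (so $r_2$ may be included), and that for every interior rational the run-length strictly drops below $2n-2$. This requires carefully relating the "depth" of a vertex $s$ below the bounding Farey edges to the length of the maximal monochromatic run in its $S$-sequence, which is the genuinely combinatorial heart of the argument. Once the correspondence "run of $j$ consecutive $p$'s in $CS(s)$ $\leftrightarrow$ entry $j$ somewhere in $T(s)$ $\leftrightarrow$ entry $j$ in $CS(\tilde s)$" is set up cleanly via Lemmas~\ref{lem:induction1} and \ref{lem:relation}, the inequality $s<r_1$ (resp.\ $s\ge r_2$) translates into "all relevant entries $\le 2n-3$", and the lemma follows. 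I would also note that the case $n=2$, where $2n-2=2$, should be checked as a sanity case, since then the forbidden pattern is just $(p,p)$ and the statement becomes that $CS(s)$ has no two consecutive $p$'s, which should match the $m_2\ge 2$ branch of Lemma~\ref{lem:properties}(2).
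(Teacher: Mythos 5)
Your overall strategy is sound, and it is in fact essentially the argument behind the result: the paper itself does not reprove this lemma but simply quotes \cite[Proposition~5.1(1)]{lee_sakuma_7}, and the analysis you sketch (reduce a run of $p$'s in $CS(s)$ to a term of $CT(s)=CS(\tilde s)$ via Lemmas~\ref{lem:properties} and \ref{lem:induction1}, then convert the run-length bound into the inequality $s<r_1$, resp.\ $s\ge r_2$) is the same continued-fraction bookkeeping used there and in Lemmas~\ref{lem:outside_orbit} and \ref{lem:outside_orbit2} of this paper. So the route is fine; what needs repair are three concrete details. First, it is not true that every $s\in I_1(1/p;n)$ has an expansion beginning with $p$: if $s=0$ or $s\le 1/(p+1)$ the first coefficient is absent or at least $p+1$; these cases are harmless (then $CS(s)$ contains no term $p$ at all, or is $\lp 2\rp$), but they must be mentioned, and symmetrically for $I_2$ one must dispose of $s=1$, $s=1/(p-1)$ and first coefficient at most $p-2$ before restricting to leading coefficient $p-1$. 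Second, your cutoff is off by one: for $s=[p,m_2,\dots]$ the condition $s<r_1=[p,2n-2]$ excludes $m_2\ge 2n-2$, not merely $m_2\ge 2n-1$ (indeed $[p,2n-2,m_3,\dots]>r_1$). This matters: if $m_2=2n-2$ were allowed, the runs of $p$'s, being terms of $CS([m_2-1,m_3,\dots])$, could have length $m_2=2n-2$ and your conclusion would fail; with the correct bound $m_2\le 2n-3$ the runs have length at most $m_2\le 2n-3$, which is exactly the statement you announce earlier in your own sketch (``all relevant entries $\le 2n-3$''), so the two sentences of your proposal contradict each other and the stronger one is the right one.

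Third, the ``boundary bookkeeping'' you flag as the main obstacle would come out the opposite of what you expect: $CS(r_1)$ does \emph{not} contain $((2n-2)\langle p\rangle)$. By Lemma~\ref{lem:relation}(3), $S([p,2n-2])=(p+1,(2n-3)\langle p\rangle,p+1,(2n-3)\langle p\rangle)$, so the maximal run at the excluded endpoint has length $2n-3$; the first slopes exhibiting a run of length $2n-2$ (such as $[p,2n-1]$ or $[p,2n-2,m_3,\dots]$) lie strictly beyond $r_1$. The endpoint $r_1$ is removed from $I(1/p;n)$ not because the forbidden pattern appears there, but because $r_1$ and $r_2$ lie in the same $\RGPC{1/p}{n}$-orbit (one is carried to the other by the generator of $C_{1/p}(2n)$), so only one of them may be retained in a fundamental interval; Lemma~\ref{lem:connection} would in fact remain true with $r_1$ included. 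Relatedly, your $n=2$ sanity check mixes the branches of Lemma~\ref{lem:properties}(2): for $s\in I_1$ with leading coefficient $p$ the relevant case is $m_2=1$ (no $(p,p)=(m,m)$), while for $s\in I_2$ with leading coefficient $p-1$ it is $m_2\ge 2$ (no $(p,p)=(m+1,m+1)$). With these corrections the proof closes exactly as you intend.
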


\begin{proof}
This is nothing other than \cite[Proposition~5.1(1)]{lee_sakuma_7}.
\end{proof}

As an easy consequence of
Lemmas~\ref{lem:maximal_piece2}(3)
and \ref{lem:connection},
we obtain the following.

\begin{corollary}
\label{cor:consecutive_vertices}
For any rational number $s \in I_1(1/p;n)\cup I_2(1/p;n)$,
the cyclic word $(u_s)$ cannot contain a subword
$w$ of the cyclic word $(u_{1/p}^{\pm n})$
which is a product of $4n-1$ pieces
but is not a product of less than $4n-1$ pieces.
\end{corollary}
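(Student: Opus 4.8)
The statement is an immediate corollary of the two cited results, so the plan is simply to chain them together and observe that the hypothesis of Lemma~\ref{lem:maximal_piece2}(3) is met. Concretely, suppose for contradiction that there is a rational number $s \in I_1(1/p;n) \cup I_2(1/p;n)$ such that the cyclic word $(u_s)$ contains a subword $w$ of the cyclic word $(u_{1/p}^{\pm n})$ which is a product of $4n-1$ pieces but not a product of fewer than $4n-1$ pieces. (Here we apply the discussion to the slope $r=1/p$, so $k=1$ and $S(r)=(m,m)$ with $m=p$ by Lemma~\ref{lem:properties}(1); note $0<1/p<1$ since $p\ge 2$, so the standing hypothesis $0<r<1$ of Lemma~\ref{lem:maximal_piece2} is satisfied.) Since $0\le s\le 1$, such a $w$ is exactly a subword as in Lemma~\ref{lem:maximal_piece2}(2) sitting inside $(u_s)$, so Lemma~\ref{lem:maximal_piece2}(3)(a) applies and yields that $CS(s)$ contains $((2n-2)\langle p\rangle)$ as a subsequence.

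But this directly contradicts Lemma~\ref{lem:connection}, which asserts that for $s \in I_1(1/p;n) \cup I_2(1/p;n)$ the cyclic $S$-sequence $CS(s)$ does \emph{not} contain $((2n-2)\langle p\rangle)$ as a subsequence. Hence no such $s$ and $w$ can exist, proving the corollary.

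There is essentially no obstacle here: the content has been pushed entirely into Lemmas~\ref{lem:maximal_piece2} and \ref{lem:connection}, and the only thing to check is the bookkeeping that $r=1/p$ falls under the $k=1$ case (so that one uses part (a) rather than part (b) of Lemma~\ref{lem:maximal_piece2}(3)), and that the subsequence that Lemma~\ref{lem:maximal_piece2}(3)(a) produces, namely $((2n-2)\langle m\rangle)=((2n-2)\langle p\rangle)$, is verbatim the forbidden pattern in Lemma~\ref{lem:connection}. The only mild subtlety worth a sentence is making explicit that ``$w$ is a subword of the cyclic word $(u_{1/p}^{\pm n})$ which is a product of $4n-1$ pieces but not of fewer'' is the same condition as ``$w$ is a subword as in Lemma~\ref{lem:maximal_piece2}(2)'' — this is true because the pieces in question are pieces with respect to the symmetrized set $R$ generated by $u_{1/p}^{n}$, and $(u_{1/p}^{-n})$ contributes the same set of (unoriented) subwords as $(u_{1/p}^{n})$ up to inversion, so the piece-decomposition length is unchanged; hence the hypotheses of Lemma~\ref{lem:maximal_piece2}(3) with slope $s$ are literally in force. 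After that observation the argument is a two-line syllogism, which is why the statement is labelled a corollary.
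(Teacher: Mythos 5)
Your proposal is correct and matches the paper's intended argument: the paper offers no separate proof, stating the corollary as an easy consequence of exactly the two results you chain together, namely Lemma~\ref{lem:maximal_piece2}(3) (in its $k=1$ case with $m=p$) and Lemma~\ref{lem:connection}. Your extra remark about the $\pm$ exponent and the symmetrized relator set is a fine piece of bookkeeping that the paper leaves implicit.
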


If $\RGP{1/p}$ is the group of automorphisms of
the Farey tessellation $\DD$ generated by reflections in the edges of $\DD$ with an endpoint $1/p$,
and $\RGPP{1/p}$
is the group generated by $\RGP{1/p}$ and $\RGP{\infty}$,
then the region, $Q$, bounded by a pair of Farey edges with an endpoint $\infty$
and a pair of Farey edges with an endpoint
$1/p$ forms a fundamental domain of the action of
$\RGPP{1/p}$ on $\HH^2$.
Let $I_1(1/p)$ and $I_2(1/p)$ be the closed intervals in $\RRR$
obtained as the intersection with $\RRR$ of the closure of $Q$.
Then the intervals $I_1(1/p)$ and $I_2(1/p)$ are given by
$I_1(1/p)=\{0\}$ and $I_2(1/p)=[\frac{1}{p-1},1]$.
Clearly $I_1(1/p) \subsetneq I_1(1/p;n)$ and $I_2(1/p) \subsetneq I_2(1/p;n)$.
It was shown in {\cite[Proposition~4.6]{Ohtsuki-Riley-Sakuma}} that
if two elements $s$ and $s'$ of $\QQQ$ belong to the same $\RGPP{1/p}$-orbit,
then the unoriented loops $\alpha_s$ and $\alpha_{s'}$ are homotopic in $S^3-K(1/p)$.

\begin{lemma}
\label{lem:inside_orbit}
For any rational number $s \in I_1(1/p) \cup I_2(1/p)$,
if $s\ne 0$, then every term of $CS(s)$ is less than $p$.
\end{lemma}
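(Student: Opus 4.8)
The plan is to reduce the statement to a computation with continued fractions via the structural results already in hand. First I would note that the rational numbers in $I_1(1/p)\cup I_2(1/p)=\{0\}\cup[\tfrac{1}{p-1},1]$ that are not equal to $0$ are exactly those $s\in(0,1]$ with $1/s\ge p-1$, equivalently those whose continued fraction expansion $s=[m_1,\dots,m_k]$ has $m_1\ge p-1$; the case $s=1$ (i.e. $[1]$) is a trivial exception handled separately since $CS(1)=\lp 1\rp$ and $p\ge 2$. So the heart of the matter is: if $s=[m_1,\dots,m_k]$ with $m_1\ge p-1$, then every term of $CS(s)$ is $\le p-1<p$.

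The key step is to invoke Lemma~\ref{lem:properties}, which says that the terms of $S(s)$ (hence of $CS(s)$) are either $m$ or $m+1$ when $k\ge 2$, and that $S(s)=(m,m)$ when $k=1$, where $m=m_1$. Thus every term of $CS(s)$ is at most $m_1+1$. To finish I would need $m_1+1\le p$, i.e. $m_1\le p-1$, but the hypothesis only gives $m_1\ge p-1$; so the delicate point is to show $m_1=p-1$ forces the term $m+1=p$ not to actually occur, while $m_1\ge p$ must be excluded altogether. For $m_1\ge p$ one has $s=[m_1,\dots,m_k]<1/(p-1)$ strictly when $m_1\ge p$ unless... — more carefully, $s=[m_1,\dots,m_k]\ge \tfrac{1}{p-1}$ together with $m_1\ge p$ would force $m_1=p-1$ after all (since $[m_1,\dots,m_k]\le 1/m_1\le 1/p<1/(p-1)$ when $m_1\ge p$, contradiction), so in fact the hypothesis $s\in[\tfrac1{p-1},1]$, $s\ne 1$ pins down $m_1=p-1$ exactly. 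Then when $k=1$ we get $s=[p-1]=1/(p-1)$ and $CS(s)=\lp p-1,p-1\rp$, all terms $<p$. When $k\ge 2$, the terms are $m_1=p-1$ or $m_1+1=p$, and I must rule out $p$: but Lemma~\ref{lem:properties}(2) says $S(s)$ ends with $m=p-1$, and the only way to get a term $m+1=p$ is through the $T$-sequence structure; here I would use that $[p-1,m_2,\dots,m_k]\ge \tfrac{1}{p-1}$ forces the continued fraction to reduce, and indeed $[p-1,m_2,\dots]<[p-1]=\tfrac1{p-1}$ whenever the tail $[m_2,\dots,m_k]$ is finite and positive — contradicting $s\ge\tfrac1{p-1}$. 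Hence $k\ge 2$ is impossible, and only $k=1$ survives.

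So the cleaner argument is: the hypothesis $s\in I_2(1/p)=[\tfrac1{p-1},1]$ with $s\ne 0$ (automatic) and $s\ne 1$ handled trivially, combined with $s\le 1/m_1$ always and $[m_1,\dots,m_k]<1/m_1$ strictly when $k\ge 2$, forces $k=1$ and $m_1=p-1$, whence $CS(s)=CS(1/(p-1))=\lp p-1,p-1\rp$ by Lemma~\ref{lem:properties}(1), all of whose terms are $<p$. The main obstacle I anticipate is handling the boundary/degenerate cases cleanly — namely $s=1$ and the precise inequality $[m_1,\dots,m_k]<1/m_1$ for $k\ge2$ versus the possibility of equality — and making sure the reduction of $I_1(1/p)\cup I_2(1/p)$ to "$k=1$, $m_1=p-1$ or $s=1$" is airtight; once that is done the conclusion is immediate from Lemma~\ref{lem:properties}.
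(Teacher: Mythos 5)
Your argument contains a genuine error at its very first step: you convert the hypothesis $s\in I_2(1/p)=[\tfrac{1}{p-1},1]$ into the condition $1/s\ge p-1$, i.e.\ $m_1\ge p-1$, but taking reciprocals reverses the inequality. The correct statement is $1/s\le p-1$, hence $m_1\le p-1$, and in fact $m_1\le p-2$ whenever $k\ge 2$ (since then $1/s=m_1+[m_2,\dots,m_k]>m_1$). Because of this flipped inequality, your subsequent ``reduction'' --- that every rational in $I_2(1/p)\setminus\{1\}$ must have $k=1$ and $m_1=p-1$, so that $CS(s)=\lp p-1,p-1\rp$ --- is false: for $p=3$ the interval is $[\tfrac12,1]$ and contains, e.g., $s=2/3=[1,2]$, for which $CS(s)=\lp 2,1,2,1\rp$, not $\lp 2,2\rp$. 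So as written the proof does not establish the lemma, even though the lemma is true for these examples.

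The overall strategy (bound the terms of $CS(s)$ by $m_1+1$ via Lemma~\ref{lem:properties} and then bound $m_1$ using membership in $I_2(1/p)$) is sound and easily repaired: from $s\ge\tfrac{1}{p-1}$ one gets $m_1\le p-1$ with $k=1$, or $m_1\le p-2$ with $k\ge 2$; in the first case every term of $CS(s)$ equals $m_1\le p-1<p$, and in the second case every term is $m_1$ or $m_1+1\le p-1<p$, which is exactly the assertion. Note that the paper itself does not argue this at all but simply cites \cite[Proposition~3.19]{lee_sakuma_2}, so a corrected version of your elementary continued-fraction argument would be a legitimate self-contained alternative --- but the version you wrote, with the reversed inequality and the resulting false classification of the rationals in $I_2(1/p)$, does not stand.
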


\begin{proof}
The assertion is nothing other than \cite[Proposition~3.19]{lee_sakuma_2}.
\end{proof}

\begin{lemma}
\label{lem:outside_orbit}
For any rational number $s \in I_1(1/p;n) \setminus I_1(1/p)$,
$CS(s)$ contains a subsequence
$(p+c, d \langle p \rangle, p+c')$ for some $c, c'\ge 1$ and $0 \le d \le 2n-4$.
\end{lemma}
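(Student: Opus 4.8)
The plan is to translate the hypothesis into explicit conditions on the continued fraction expansion of $s$ and then read off the required block of $CS(s)$ from Lemmas~\ref{lem:properties}, \ref{lem:induction1} and \ref{lem:relation}. Since $I_1(1/p)=\{0\}$, the hypothesis means $0<s<r_1=[p,2n-2]$. Write $s=[m_1,\dots,m_k]$ and $m=m_1$. First I would note that $r_1<1/p$ and that $m\le p-1$ would force $s>\frac{1}{m+1}\ge\frac1p>r_1$ (using $[m_2,\dots,m_k]<1$ when $k\ge2$); hence $m\ge p$. If $m\ge p+1$, then every term of $CS(s)$ is $\ge m>p$, and since $CS(s)$ has at least two terms (Lemma~\ref{lem:sequence}), any two consecutive terms give a subsequence $(p+c,p+c')$ with $c,c'\ge1$, which is the assertion with $d=0$.

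The substance is the case $m=p$, where necessarily $k\ge2$ since $s=1/p=[p]$ satisfies $s>r_1$ and is excluded. Rewriting $s=1/(p+[m_2,\dots,m_k])$ and $r_1=1/(p+1/(2n-2))$ shows that $s<r_1$ is equivalent to $[m_2,\dots,m_k]>1/(2n-2)$. I would then split on $m_2$. If $m_2=1$ (forcing $k\ge3$), Lemma~\ref{lem:properties}(2a) gives $CS(s)=\lp t_1\langle p+1\rangle,p,\dots,t_\ell\langle p+1\rangle,p\rp$ with $(t_1,\dots,t_\ell)=T(s)$, while Lemma~\ref{lem:induction1} gives $CT(s)=CS(\tilde{s})$ with $\tilde{s}=[m_3,\dots,m_k]$. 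By Lemma~\ref{lem:properties}, $CS(\tilde{s})$ contains a term $\ge2$ (it is $m_3+1$ when $\tilde{s}$ has at least two partial quotients, and it is $m_3\ge2$ when $\tilde{s}=[m_3]$), so some $t_i\ge2$; hence $CS(s)$ contains two consecutive $(p+1)$'s, which is the assertion with $c=c'=1$ and $d=0$.

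If $m_2\ge2$, Lemma~\ref{lem:properties}(2b) gives $CS(s)=\lp p+1,t_1\langle p\rangle,\dots,p+1,t_\ell\langle p\rangle\rp$ with $(t_1,\dots,t_\ell)=T(s)$, so the numbers of $p$'s occurring between consecutive $(p+1)$-terms are precisely $t_1,\dots,t_\ell$. From $[m_2,\dots,m_k]>1/(2n-2)$ together with $[m_2,\dots,m_k]\le1/m_2$ one gets $m_2\le2n-3$. Moreover, by Lemma~\ref{lem:induction1} together with Lemma~\ref{lem:properties} (or directly by Lemma~\ref{lem:relation}(3)--(4)), the smallest term of $CT(s)=CS(\tilde{s})$, where $\tilde{s}=[m_2-1,m_3,\dots,m_k]$, is its first partial quotient $m_2-1$ (because $S(\tilde{s})$ ends with $m_2-1$). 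Hence some $t_i=m_2-1\le2n-4$, and the block $(p+1,t_i\langle p\rangle,p+1)$ of $CS(s)$ is the required subsequence, with $c=c'=1$ and $d=m_2-1\in\{1,\dots,2n-4\}$.

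I expect the main obstacle to be not conceptual but the bookkeeping for the degenerate continued fractions: $k=2$ (so $\tilde{s}=[m_2-1]$, possibly $\tilde{s}=[1]=1$), $m_2=2$ (so $\tilde{s}$ begins with the partial quotient $1$ and $CS(\tilde{s})$ uses the values $1$ and $2$), and small $n$ (where $2n-4$ may be $0$). In each of these I would check that the hypothesis $s<r_1$ genuinely excludes the problematic configurations; for example, when $n=2$ the subcase $m_2\ge2$ does not occur at all, since then $[m_2,\dots,m_k]<1/2=1/(2n-2)$, leaving only the $m_2=1$ subcase, which was settled above with $d=0$. One also repeatedly uses that $CS(s)$ and $T(s)$ have length at least $2$, which follows from the $(S_1,S_2,S_1,S_2)$-decomposition of Lemma~\ref{lem:sequence}.
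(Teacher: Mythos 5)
Your proof is correct and follows essentially the same route as the paper's: the same trichotomy on the leading partial quotients forced by $0<s<[p,2n-2]$ (namely $m_1\ge p+1$; $m_1=p$, $m_2=1$; $m_1=p$, $2\le m_2\le 2n-3$), with the $d=0$ cases read off from Lemma~\ref{lem:properties} and the last case handled by extracting the term $m_2-1$ from $CT(s)=CS(\tilde{s})$ via Lemma~\ref{lem:induction1}. Your explicit treatment of the degenerate subcases ($k=2$, $m_2=2$, $n=2$) only spells out what the paper leaves implicit.
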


\begin{proof}
Any rational number $s \in I_1(1/p;n) \setminus I_1(1/p)$,
i.e., $0 < s < [p, 2n-2]$,
has a continued fraction expansion $s=[l_1, \dots, l_t]$,
where $t \ge 1$, $(l_1, \dots, l_t) \in (\ZZ_+)^t$ and $l_t \ge 2$,
such that
\begin{enumerate}[\indent \rm (i)]
\item $t \ge 3$, $l_1=p$ and $l_2=1$;
or

\item $t \ge 2$, $l_1=p$ and $2 \le l_2 \le 2n-3$; or

\item $t \ge 1$ and $l_1 \ge p+1$.
\end{enumerate}
If (i) happens, $CS(s)$ contains a subsequence
$(p+1, p+1)$, so the assertion holds with $d=0$.
If (ii) happens, then $\tilde s=[l_2-1,l_3,\dots,l_t]$,
where $\tilde{s}$ denotes the rational number defined as in Lemma~\ref{lem:induction1}
for the rational number $s$ so that $CS(\tilde{s})=CT(s)$
and therefore $CT(s)=CS(\tilde s)$ contains $l_2-1$ by Lemma~\ref{lem:properties}.
Hence $CS(s)$ contains a subsequence
$(p+1, d \langle p \rangle, p+1)$ with $d=l_2-1$.
Since $1 \le d=l_2-1 \le 2n-4$, the assertion holds.
If (iii) happens, $CS(s)$ contains a subsequence
$(p+c, p+c')$ for some $c, c' \ge 1$, so the assertion holds
with $d=0$.
\end{proof}

\begin{lemma}
\label{lem:outside_orbit2}
For any rational number $s \in I_2(1/p;n) \setminus I_2(1/p)$,
$CS(s)$ contains $(p-1, p, p-1)$ as a subsequence
and does not contain $(p,p)$ as a subsequence.
\end{lemma}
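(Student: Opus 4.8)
The plan is to analyze the continued fraction expansion of $s\in I_2(1/p;n)\setminus I_2(1/p)$ and apply the inductive structure of $S$-sequences encoded in Lemmas~\ref{lem:induction1}--\ref{lem:relation}. First I would note that $I_2(1/p)=[\tfrac1{p-1},1]$ while $I_2(1/p;n)=[r_2,1]$ with $r_2=[p-1,2]=\tfrac{2}{2p-1}$, so a rational number $s$ in the difference satisfies $\tfrac{2}{2p-1}\le s<\tfrac1{p-1}$. Writing $s=[l_1,\dots,l_t]$ with $l_t\ge 2$ (and $l_t\ge 2$ unless $t=1$), the condition $s<1/(p-1)$ forces $l_1\ge p-1$, and the condition $s\ge [p-1,2]$ together with $s\notin I_2(1/p)$ should pin down $l_1=p-1$ and constrain $l_2$; I expect the outcome to be $l_1=p-1$ and either $t\ge 2$ with $l_2\ge 2$, or the small boundary case $s=[p-1,2]$ itself (whose $S$-sequence is computed directly from Lemma~\ref{lem:relation}(3)). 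So with $m:=l_1=p-1$, Lemma~\ref{lem:properties}(2) tells us every term of $S(s)$ is $m$ or $m+1=p$, i.e.\ $CS(s)$ uses only the values $p-1$ and $p$; in particular $(p,p)$ can only appear in $CS(s)$ if two consecutive terms equal $p=m+1$.

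Next I would rule out $(p,p)$. Since $m_2=l_2\ge 2$, Lemma~\ref{lem:properties}(2)(b) says no two consecutive terms of $S(s)$ equal $m+1=p$, hence $(p,p)$ is not a subsequence of $S(s)$; because the two ends of $S(s)$ are $m+1$ and $m$ respectively (again Lemma~\ref{lem:properties}(2)), wrapping around cyclically also never produces two consecutive $p$'s, so $(p,p)$ is not a subsequence of $CS(s)$ either. This is the easy half.

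For the positive assertion that $(p-1,p,p-1)=(m,m+1,m)$ occurs in $CS(s)$, I would use Lemma~\ref{lem:properties}(2)(b): $CS(s)=\lp m+1, t_1\langle m\rangle, m+1, t_2\langle m\rangle,\dots\rp$ with each $t_i\ge 1$ (here the $t_i$ are the terms of $CT(s)$). Any block $m+1, t_i\langle m\rangle, m+1$ with $t_i\ge 2$ already contains $(m,m+1,m)$ read the other way, i.e.\ contains $m+1,m,m$; but I actually want $m,m+1,m$, so the cleaner route is: since $t_i\ge 1$ for every $i$, the cyclic sequence contains the pattern $\dots m, m+1, m\dots$ precisely when some $t_i=1$, OR when two blocks abut so that after a run of $m$'s we see $m+1$ then (the start of the next block, a run of $m$'s). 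Concretely, between two consecutive $m+1$'s there is at least one $m$, and after that $m+1$ there is again at least one $m$; so the three-term window consisting of the last $m$ of one block, the following $m+1$, and the first $m$ of the next block is exactly $(m,m+1,m)=(p-1,p,p-1)$, provided there are at least two $m+1$'s in $CS(s)$, equivalently $s\ne 2/(2p-1)=[p-1,2]$ — and that exceptional case is handled by direct computation ($S([p-1,2])=(p,\,(p-1),(p-1),\,p,\,(p-1),(p-1))$ by Lemma~\ref{lem:relation}(3), which visibly contains $p-1,p,p-1$ cyclically). The main obstacle is the bookkeeping at the boundary: carefully confirming that the hypothesis $s\in I_2(1/p;n)\setminus I_2(1/p)$ translates exactly into $l_1=p-1$ with $l_2\ge 2$ (or $s=[p-1,2]$), since this is where the specific endpoints $r_2=[p-1,2]$ and $1/(p-1)=[p-1]$ enter; once the continued-fraction normal form is secured, both conclusions follow mechanically from Lemma~\ref{lem:properties}. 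I would also double-check, using Remark~\ref{remark:recovering the slope} or Lemma~\ref{lem:induction1}, that no degenerate $t_i$ appears that would break the three-term window argument.
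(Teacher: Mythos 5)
Your proposal follows essentially the same route as the paper: the hypothesis $[p-1,2]\le s<[p-1]$ pins down the continued fraction $s=[l_1,\dots,l_t]$ with $l_1=p-1$, $t\ge 2$ and $l_2\ge 2$, after which both conclusions are read off from Lemma~\ref{lem:properties}(2b), exactly as you do. The only (harmless) slip is in your exceptional case: by Lemma~\ref{lem:relation}(3) one has $S([p-1,2])=(p,\,p-1,\,p,\,p-1)$, not $(p,\,p-1,\,p-1,\,p,\,p-1,\,p-1)$, and in fact no exceptional case is needed, since the decomposition $S(s)=(S_1,S_2,S_1,S_2)$ always provides at least two terms equal to $p$, so your three-term window argument applies to every $s$ in question.
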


\begin{proof}
Any rational number $s \in I_2(1/p;n) \setminus I_2(1/p)$,
i.e., $[p-1,2] \le s < [p-1]$,
has a continued fraction expansion $s=[l_1, \dots, l_t]$,
where $t \ge 2$, $(l_1, \dots, l_t) \in (\ZZ_+)^t$,
$l_1=p-1$, $l_2 \ge 2$ and $l_t \ge 2$.
Then by Lemma~\ref{lem:properties}(2b),
$CS(s)$ consists of $p-1$ and $p$ without two consecutive terms $(p, p)$.
It then follows that $CS(s)$ contains $(p-1, p, p-1)$ as a subsequence.
\end{proof}

\section{Proof of Main Theorem~\ref{thm:conjugacy}(1)
for the case when $r=1/p$}
\label{sec:proof of main theorem(1) for the case when $r=1/m$}

Suppose on the contrary that there exist two distinct rational numbers $s$ and $s'$
in $I_1(1/p;n)\cup I_2(1/p;n)$ for which the simple loops $\alpha_s$ and $\alpha_{s'}$
are homotopic in $\orbs(1/p;n)$. Then $u_s$ and $u_{s'}^{\pm 1}$ are conjugate in $\Hecke(1/p;n)$.
By Lemma~\ref{lem:lyndon_schupp}, there is a reduced nontrivial annular
diagram $M$ over $\Hecke(1/p;n)=\langle a, b \svert u_{1/p}^n \rangle$ with
$(\phi(\alpha)) \equiv (u_s)$ and $(\phi(\delta)) \equiv (u_{s'}^{\pm 1})$,
where $\alpha$ and $\delta$ are, respectively, outer and inner boundary cycles of $M$.
Since $s, s' \in I_1(1/p;n)\cup I_2(1/p;n)$,
we see by Lemma~\ref{lem:connection}
that $CS(\phi(\alpha))$ and $CS(\phi(\delta))$
do not contain $((2n-2) \langle p \rangle)$ as a subsequence.
So by Corollary~\ref{cor:structure}, $M$ is shaped as
in Figure~\ref{fig.layer}(a) or Figure~\ref{fig.layer}(b).

\begin{lemma}
\label{lem:claim1}
$M$ is shaped as in Figure~{\rm \ref{fig.layer}(a)}, that is,
$M$ satisfies the conclusion of Corollary~{\rm \ref{cor:structure}(1)}.
\end{lemma}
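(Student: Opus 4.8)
The plan is to argue by contradiction, ruling out the shape of Figure~\ref{fig.layer}(b); so I would assume $\sigma\cap\tau=\emptyset$ and produce a forbidden subsequence in $CS(s)$, contradicting Lemma~\ref{lem:connection}.

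First I would unpack the combinatorial data furnished by the Structure Theorem. Since $s,s'\in I_1(1/p;n)\cup I_2(1/p;n)$, Lemma~\ref{lem:connection} guarantees that $M$ satisfies hypothesis~(ii) of Theorem~\ref{thm:annular_structure} in the case $k=1$ (where $S_1=\emptyset$ and $S_2=(p)$), so Theorem~\ref{thm:annular_structure}(4) and Corollary~\ref{cor:structure}(2) apply: every vertex of $M$ lies on $\partial M$ and has degree $2$ or $4$; $d_M(D)=4n$ for every face $D$; along $\sigma$ any two consecutive degree-$4$ vertices are separated by exactly $4n-3$ degree-$2$ vertices; and the number $t$ of faces of the outer boundary layer equals the number of degree-$4$ vertices lying on $\sigma$. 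A short bookkeeping argument then produces, for each face $D$ of the outer layer, a single maximal sub-path $\rho_D$ of $\sigma$ lying on $\partial D$: the endpoints of such a sub-path are shared with a neighbouring face, hence have degree $\ge 3$, hence degree $4$; the $\rho_D$ are pairwise edge-disjoint and cover $\sigma$; and since there are $t$ of them and $t$ degree-$4$ vertices on $\sigma$, each $\rho_D$ runs between two consecutive degree-$4$ vertices, and so has exactly $4n-2$ edges, all of whose intermediate vertices have degree $2$.

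Next I would fix one such arc $\rho=\rho_D$ and use its two roles. On the one hand $\phi(\rho)$ is a subword of $\phi(\partial D)$, which is a cyclic permutation of $u_{1/p}^{\pm n}$; since $CS(1/p)=(p,p)$ by Lemma~\ref{lem:properties}(1), every syllable of the cyclic word $(u_{1/p}^{\pm n})$ has length exactly $p$, so, writing $j$ for the number of such full-length syllables entirely contained in $\rho$, the sequence $S(\phi(\rho))$ consists of a block of $j$ consecutive $p$'s, possibly preceded and followed by one shorter term. On the other hand $\phi(\rho)$ is a subword of $\phi(\sigma)$, a cyclic permutation of $u_s$, so this block of $j$ consecutive $p$'s occurs as consecutive terms of a linear representative of $CS(s)$. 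Now comes the piece count: by Convention~\ref{convention}(2) each of the $4n-2$ edge labels composing $\phi(\rho)$ is a piece, while by Convention~\ref{convention}(3) (legitimate because the intermediate vertices of $\rho$ have degree $2$) $\phi(\rho)$ cannot be written as a product of fewer than $4n-2$ pieces. However, Lemma~\ref{lem:maximal_piece}(1) shows every piece of $R$ is a proper subword of a single length-$p$ syllable, and Lemma~\ref{lem:maximal_piece}(1c) shows each such syllable is a product of two pieces and each proper initial or terminal subword of a syllable is a single piece; hence $\phi(\rho)$ is a product of at most $2j+2$ pieces. Combining, $4n-2\le 2j+2$, so $j\ge 2n-2$, and therefore $CS(s)$ contains $((2n-2)\langle p\rangle)$ as a subsequence. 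Since $s\in I_1(1/p;n)\cup I_2(1/p;n)$, this contradicts Lemma~\ref{lem:connection}; hence $M$ cannot have the shape of Figure~\ref{fig.layer}(b), so it has the shape of Figure~\ref{fig.layer}(a), which is the assertion of the lemma.

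I expect the main obstacle to be the bookkeeping step: extracting cleanly from Figure~\ref{fig.layer}(b) the statement that $\sigma$ is the concatenation of arcs of length exactly $4n-2$, each contained in the boundary cycle of one outer-layer face, and disposing along the way of the few degenerate possibilities (for instance $t\le 1$, or $s=0$ where $u_s$ is too short to contain a reduced sub-path of length $4n-2$), all of which are easily excluded. Once this dictionary between the diagram and the sequences $CS(s)$ is in place, the piece count and the appeal to Lemma~\ref{lem:connection} are routine.
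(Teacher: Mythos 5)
Your reduction to ruling out Figure~\ref{fig.layer}(b), and the bookkeeping showing that each outer-layer face $D$ contributes an arc $\rho_D$ of $\sigma$ with exactly $4n-2$ edges whose label is a product of $4n-2$ pieces but of no fewer, are sound and match the paper's starting point. The gap is in the final transfer to $CS(s)$. By Lemma~\ref{lem:maximal_piece}(1), every piece is a proper subword of a single length-$p$ syllable of $(u_{1/p}^{\pm n})$, so the minimal piece count of $\phi(\rho_D)$ is $2j+e$, where $j$ is the number of full syllables it contains and $e\in\{0,1,2\}$ is the number of nonempty partial syllables at its two ends. Since $4n-2$ is even, either $e=2$ (whence $j=2n-2$) or $e=0$ (whence $j=2n-1$). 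In the first case all $2n-2$ full syllables are interior terms of $S(\phi(\rho_D))$, hence genuine terms of $CS(s)$, and your contradiction with Lemma~\ref{lem:connection} goes through. But in the second case $S(\phi(\rho_D))=((2n-1)\langle p\rangle)$ begins and ends exactly at syllable boundaries of the relator, and its first and last terms may merge with adjacent letters of $(u_s)$ into longer syllables; only the $2n-3$ interior terms are guaranteed to be terms of $CS(s)$. For $n=2$ this yields only a single term $p$ in $CS(s)$, which Lemma~\ref{lem:connection} does not forbid (e.g.\ $s=[p,1,2]\in I_1(1/p;2)$ has $CS(s)$ containing $p$ but not $(p,p)$). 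So your asserted conclusion that $CS(s)$ contains $((2n-2)\langle p\rangle)$ does not follow, and the proof breaks in precisely this case.

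The paper's proof is organized differently so as to avoid this. It extracts only the weaker consequence that $CS(s)$ contains a term $p$ and has more than two terms, which forces $s\ne 0$ and, by Lemma~\ref{lem:inside_orbit}, $s\notin I_1(1/p)\cup I_2(1/p)$. It then invokes Lemmas~\ref{lem:outside_orbit} and \ref{lem:outside_orbit2} to produce an explicit subword $w$ of $(u_s)$ with $S(w)=(p+c,d\langle p\rangle,p+c')$ or $(p-1,p,p-1)$, shows that some outer-layer face $D$ has $\phi(\partial D^+)$ contained in $w$, and obtains the contradiction on the \emph{inner} side: $S(\phi(\partial D^-))$ would then contain $(p,p)$ or $(\ell,(2n-3)\langle p\rangle,\ell')$, so by Lemma~\ref{lem:maximal_piece}(1) it cannot be written as a product of two pieces, contradicting the two-layer picture. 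To repair your argument you would need to treat the $e=0$ arcs separately, e.g.\ by analysing how the end syllables of consecutive arcs interact along $\sigma$, which essentially pushes you back into a case analysis of the paper's type.
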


\begin{proof}
Suppose on the contrary that $M$ is shaped as in Figure~\ref{fig.layer}(b).
Then $(\phi(\alpha)) \equiv (u_s)$ contains a subword of the cyclic word
$(u_{1/p}^{\pm n})$ which is a product of $4n-2$ pieces
but is not a product of less than $4n-2$ pieces
(see Convention~\ref{convention}(3) and Theorem~\ref{thm:annular_structure}(4)).
Since $4n-2 \ge 6$, this
together with Lemma~\ref{lem:maximal_piece}(1c)
implies that $CS(\phi(\alpha))=CS(s)$ contains a term $p$
and consists of more than two terms.
Thus we have $s \neq 0$, because $CS(u_0)=\lp 2 \rp$
by Remark~\ref{rem:epsilon}.
Then by Lemma~\ref{lem:inside_orbit}, $s \notin I_1(1/p) \cup I_2(1/p)$.
By Lemmas~\ref{lem:outside_orbit} and \ref{lem:outside_orbit2},
the cyclic word $(u_s)$ contains a subword $w$ for which $S(w)$ is a subsequence of $CS(s)$
such that
\[
S(w)=
\begin{cases}
(p+c, d \langle p\rangle, p+c') & \text{if $s \in I_1(1/p;n)\backslash I_1(1/p)$};\\
(p-1, p, p-1) & \text{if $s \in I_2(1/p;n)\backslash I_2(1/p)$},
\end{cases}
\]
where $c, c' \ge 1$ and $0 \le d \le 2n-4$.

\medskip
\noindent {\bf Claim.} {\it There is a face $D$ in the outer boundary layer of $M$ such that
$\phi(\partial D^+)$ is a subword of $w$ {\rm (}recall Notation~{\rm \ref{notation:cell_boundary})}.}

\begin{proof}[Proof of Claim]
Suppose that there is no such face.
Then either (i) there is a face, $D$, in the outer boundary layer of $M$ such that
$\phi(\partial D^+)\equiv uwv$ for some words $u$ and $v$
such that at least one of them is nonempty, or
(ii) there are two successive faces, say $D_1$ and $D_2$,
in the outer boundary layer of $M$
such that $\phi(\partial D^+_1)\equiv uw_1$ and
$\phi(\partial D^+_2)\equiv w_2v$,
where $u$, $v$, $w_1$ and $w_2$ are nonempty words such that
$w\equiv w_1w_2$.
If (i) holds, then by using the fact that $S(w)$ is a subsequence of $CS(s)$,
we see that
the first or the last component of $S(w)$ is also a component of
$CS(\phi(\partial D))=\lp 2n \langle p \rangle \rp$, a contradiction.
If (ii) holds, then again by using the fact that $S(w)$ is a subsequence of $CS(s)$,
we see that either
the first component of $S(w)$ is also a component of
$CS(\phi(\partial D_1))=\lp 2n \langle p \rangle \rp$
or the last component of $S(w)$ is also a component of
$CS(\phi(\partial D_2))=\lp 2n \langle p \rangle \rp$, a contradiction.
\end{proof}

For such a face $D$ as in the statement of the above claim,
since $CS(\phi(\partial D))=\lp 2n \langle p \rangle \rp$,
this claim
yields that $S(\phi(\partial D^-))$ must contain
$(p, p)$ or $(\ell, (2n-3) \langle p \rangle, \ell')$ as a subsequence
for some $\ell, \ell' \in \ZZ_+$.
But then by Lemma~\ref{lem:maximal_piece}(1),
the word $\phi(\partial D^-)$ cannot be expressed as a product of
$2$ pieces of $(u_r^{\pm 1})$,
contradicting Figure~\ref{fig.layer}(b)
(cf. Corollary~\ref{cor:structure}(2)).
\end{proof}

\begin{lemma}
\label{lem:after_claim1}
For every face $D$ in $M$, $S(\phi(\partial D^{\pm}))$ contains a term $p$.
\end{lemma}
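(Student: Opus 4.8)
The statement to prove is Lemma~\ref{lem:after_claim1}: every face $D$ of $M$ has $S(\phi(\partial D^{\pm}))$ containing a term $p$. By Lemma~\ref{lem:claim1}, $M$ is shaped as in Figure~\ref{fig.layer}(a), so it is a single layer of faces $D_1,\dots,D_t$ (in the cyclic order of Notation~\ref{notation:cell_boundary}), and each $\partial D_i$ decomposes as $\partial D_i^+$ along the outer boundary $\sigma$ and $\partial D_i^-$ along the inner boundary $\tau$. Since $(\phi(\partial D_i)) \equiv (u_{1/p}^{\pm n})$ and $CS(u_{1/p}^{n}) = \lp 2n\langle p\rangle\rp$ (recall $S(1/p)=(p,p)$ by Lemma~\ref{lem:properties}(1), so $CS(u_{1/p}^n)$ is $2n$ copies of $p$), every term of $CS(\phi(\partial D_i))$ equals $p$. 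The content of the lemma is therefore that neither $\partial D_i^+$ nor $\partial D_i^-$ is so short that its associated $S$-sequence — read as a sub\emph{word} of the cyclic word, hence possibly starting and ending mid-syllable — contains no full syllable of length $p$.

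\textbf{Key steps.} First I would invoke Notation~\ref{notation:cell_boundary} and the Structure Theorem: in Figure~\ref{fig.layer}(a), $\partial D_i$ is divided into two arcs, the outer arc $\partial D_i^+ \subseteq \sigma$ and the inner arc $\partial D_i^- \subseteq \tau$, meeting at two degree-$4$ vertices of $M$; all other vertices of $\partial D_i$ have degree $2$. Second, I would translate the piece structure: by Convention~\ref{convention} each maximal subpath of $\partial D_i^{\pm}$ between consecutive degree-$4$ vertices cannot be written as a product of fewer pieces than its number of edges, and since $d_M(D_i)=4n$ (Theorem~\ref{thm:annular_structure}(4), Corollary~\ref{cor:structure}(1)), the two arcs $\partial D_i^+$ and $\partial D_i^-$ together consist of $4n$ pieces. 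Third — the crux — I would use Lemma~\ref{lem:maximal_piece}(1) for the case $k=1$, $r=1/p$: with $u_{1/p} \equiv v_1 v_2 v_3 v_4$ where $S(v_1)=S(v_3)=S_1=\emptyset$ and $S(v_2)=S(v_4)=S_2=(p)$, so actually $u_{1/p}\equiv v_2 v_4$ with $|v_2|=|v_4|=p$. Lemma~\ref{lem:maximal_piece}(1a) says no piece contains $v_2$ or $v_4$; (1b) says no piece is of the form $v_{2e}v_{4b}$ or $v_{4e}v_{2b}$. The first assertion forces each $\partial D_i^{\pm}$, being a product of at least two pieces (it runs along at least... actually needs care, since a priori $\partial D_i^-$ could be a single piece), to contain a full syllable-boundary crossing; more precisely, a single piece of $(u_{1/p}^{\pm n})$ has $S$-sequence a proper subsequence of a sub-word of $\lp 2n\langle p\rangle\rp$ and by (1a)--(1b) cannot reach from the interior of one $p$-block across a full adjacent $p$-block, so any piece has length at most $2p-1$; but I need the complementary bound. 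I would argue: if $S(\phi(\partial D_i^+))$ contained no term equal to $p$, then $\partial D_i^+$ would be an initial and terminal fragment meeting $v_2$ or $v_4$ in at most $p-1$ letters on each end with nothing between, forcing $\partial D_i^+$ itself to be of the form $v_{2e}v_{4b}$ or $v_{4e}v_{2b}$ (or shorter), contradicting Lemma~\ref{lem:maximal_piece}(1b) together with the fact that $\partial D_i^+$ is not a single piece — and the non-single-piece claim follows from $d_M(D_i)=4n \ge 8 > 2$ combined with the degree-$4$ vertex placement (each arc carries roughly $2n$ pieces). The symmetric argument handles $\partial D_i^-$.

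\textbf{The main obstacle.} The delicate point is ruling out degenerate short arcs: one must confirm that in Figure~\ref{fig.layer}(a), \emph{both} $\partial D_i^+$ and $\partial D_i^-$ are genuinely products of two or more pieces and span enough of $(u_{1/p}^{\pm n})$ that the syllable-length bookkeeping bites — i.e. that the $4n$ pieces of $\partial D_i$ are distributed so that neither arc degenerates to something avoidable by staying inside one $p$-syllable. This is exactly where Theorem~\ref{thm:annular_structure}(4)'s count ``$4n-3$ degree-$2$ vertices between consecutive degree-$4$ vertices'' is needed: it pins down that each arc of $\partial D_i$ between degree-$4$ vertices has $4n-2 \ge 6$ edges, hence genuinely more than $p$... (well, not directly $p$, but enough pieces that the piece-structure Lemma~\ref{lem:maximal_piece}(1b) applies nontrivially). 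I would phrase the final contradiction as: a term-$p$-free $S$-sequence of a subword of $\lp 2n\langle p\rangle\rp$ that is not a single piece must, after peeling the at-most-$(p-1)$-letter end fragments, be empty, making the whole arc a forbidden $v_{2e}v_{4b}$-type piece or shorter — impossible. The routine verification that the end fragments are pieces (Lemma~\ref{lem:maximal_piece}(1c)) and that stitching two of them is forbidden ((1b)) I would leave as the straightforward bookkeeping it is.
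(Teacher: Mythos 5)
There is a genuine gap, and it sits exactly at the point you flag as ``the main obstacle.'' Your contradiction needs each arc $\partial D_i^{\pm}$ to consist of at least three edges (so that, via Convention~\ref{convention}(3), its label cannot be a product of the at most two pieces that a $p$-free subword of $(u_{1/p}^{\pm n})$ admits). To get this you invoke Theorem~\ref{thm:annular_structure}(4) ($d_M(D)=4n$ and exactly $4n-3$ degree-$2$ vertices between consecutive degree-$4$ vertices). But assertion (4) is stated, and proved, only under the hypothesis $\sigma\cap\tau=\emptyset$, i.e.\ for the two-layer shape of Figure~\ref{fig.layer}(b) --- which is precisely the shape that Lemma~\ref{lem:claim1} has already excluded. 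In the single-layer case (a), Corollary~\ref{cor:structure}(1) gives no control on $d_M(D)$ beyond $d_M(D)\ge 4n$ and, more importantly, no control whatsoever on how the edges of $\partial D$ are distributed between the outer arc and the inner arc: a priori a face may meet $\sigma$ in a single edge (a single piece) or in two edges, with almost the entire relator $u_{1/p}^{\pm n}$ carried by its inner arc. Nothing in the diagram combinatorics alone rules this out, so the step ``each arc carries roughly $2n$ pieces'' is unjustified, and with it the whole contradiction.

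Indeed the configuration you are trying to exclude structurally can only be excluded by using the hypothesis on the \emph{other} boundary label, which your proposal never invokes. The paper's argument is: if $S(\phi(\partial D^+))$ has no term $p$, then it is $(\ell)$ or $(\ell_1,\ell_2)$ with all entries at most $p-1$, and the complementary arc is then forced to satisfy $S(\phi(\partial D^-))=(y,(2n-1)\langle p\rangle,x)$ or $(p-\ell_1,(2n-2)\langle p\rangle,p-\ell_2)$; since $\phi(\partial D^-)$ is a subword of $(u_{s'}^{\pm 1})$ with $s'\in I_1(1/p;n)\cup I_2(1/p;n)$, the first alternative contradicts Corollary~\ref{cor:consecutive_vertices} (it is a product of $4n-1$ pieces but of no fewer) and the second contradicts Lemma~\ref{lem:connection} (it would put $((2n-2)\langle p\rangle)$ inside $CS(s')$), and symmetrically for $\partial D^-$. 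So the lemma is really a statement about both boundary slopes lying in $I_1\cup I_2$, not a purely local fact about faces of a $[4,4n]$-map; to repair your proof you would have to replace the piece-count on the same arc by this analysis of the complementary arc.
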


\begin{proof}
Suppose that $S(\phi(\partial D^+))$ does not contain a term $p$.
Then $S(\phi(\partial D^+))$ is of the form either
$(\ell)$ with $1 \le \ell \le p-1$ or
$(\ell_1, \ell_2)$ with $1 \le \ell_1, \ell_2 \le p-1$.
In the first case, $\phi(\partial D^-)$ is a product of $4n-1$ pieces,
but is not a product of less than $4n-1$ pieces.
But since $\phi(\partial D^-)$ is a subword of
$(\phi(\delta)) \equiv (u_{s'}^{\pm 1})$, this gives a contradiction
to Corollary~\ref{cor:consecutive_vertices}.
In the second case,
$S(\phi(\partial D^-))=(p-\ell_1, (2n-2)\langle p \rangle, p-\ell_2)$,
which implies that $CS(\phi(\delta))=CS(s')$ contains $((2n-2)\langle p \rangle)$
as a subsequence, contrary to Lemma~\ref{lem:connection}.

The same argument applies to $S(\phi(\partial D^-))$.
\end{proof}

\begin{lemma}
\label{lem:after_claim2}
$s, s' \notin I_1(1/p) \cup I_2(1/p)$.
\end{lemma}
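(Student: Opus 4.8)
The assertion is that $s,s'\notin I_1(1/p)\cup I_2(1/p)=\{0\}\cup[1/(p-1),1]$, so we must rule out $s,s'\in[1/(p-1),1]$ and $s,s'=0$. By the symmetry between the outer boundary $\alpha$ (carrying $u_s$) and the inner boundary $\delta$ (carrying $u_{s'}^{\pm 1}$), it suffices to treat $s$; the argument for $s'$ is obtained by replacing $\alpha,\partial D_i^{+}$ by $\delta,\partial D_i^{-}$ throughout. The plan is: first, using the single-layer shape of $M$ (Lemma~\ref{lem:claim1}) together with Lemma~\ref{lem:after_claim1}, to show that $CS(s)$ contains a term $\ge p$; then to play this against Lemma~\ref{lem:inside_orbit} to kill $I_2(1/p)=[1/(p-1),1]$, and against Lemma~\ref{lem:properties} to kill the value $0$.

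\textbf{Extracting a large term of $CS(s)$.}
Since $M$ is shaped as in Figure~\ref{fig.layer}(a), Notation~\ref{notation:cell_boundary} applies: the outer boundary label is the cyclically reduced word $\phi(\alpha)\equiv\phi(\partial D_1^{+})\cdots\phi(\partial D_t^{+})$ with $(\phi(\alpha))\equiv(u_s)$, and each $\partial D_i^{+}$ is a nonempty edge path. By Lemma~\ref{lem:after_claim1}, $S(\phi(\partial D_i^{+}))$ contains the term $p$. Passing from the factors $\phi(\partial D_i^{+})$ to the cyclic word $(\phi(\alpha))$ can only merge blocks of the $S$-sequences (same-sign blocks meeting across a junction), never split them, so $CS(\phi(\alpha))=CS(s)$ contains a term $\ge p$. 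If $s\in[1/(p-1),1]=I_2(1/p)$ then $s\ne 0$, and Lemma~\ref{lem:inside_orbit} forces every term of $CS(s)$ to be strictly less than $p$, a contradiction; hence $s\notin I_2(1/p)$.

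\textbf{Ruling out $s=0$.}
Suppose $s=0$. Then $|\phi(\alpha)|=|u_0|=2$ by Remark~\ref{rem:epsilon}, while $\phi(\alpha)$ is a concatenation of $t\ge 1$ words $\phi(\partial D_i^{+})$ each of length $\ge p\ge 2$; hence $t=1$, $p=2$, and $\phi(\partial D_1^{+})$ is a length-$2$ subword of the cyclic word $(u_{1/2}^{\pm n})=\bigl((aba^{-1}b^{-1})^{\pm n}\bigr)$ whose cyclic word equals $(u_0)\equiv(ab)$. Inspecting the four length-$2$ subwords of $(aba^{-1}b^{-1})^{\pm n}$ forces $\phi(\partial D_1^{+})\in\{ab,ba\}$, and in either case the complementary arc $\partial D_1^{-}$ carries a word that is, up to inversion, a cyclic conjugate of $a^{-1}b^{-1}(aba^{-1}b^{-1})^{n-1}$, whose cyclic $S$-sequence is $\lp 4,(2n-3)\langle 2\rangle\rp$. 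Since $n\ge 2$, this cyclic sequence contains both a $4$ and a $2$. But the inner boundary label of $M$ is $\phi(\partial D_1^{-})^{-1}$, whose cyclic word is $(u_{s'}^{\pm 1})$, so $CS(s')$ would contain two terms differing by $2$, contradicting Lemma~\ref{lem:properties}, by which the terms of the cyclic $S$-sequence of any slope in $[0,1]$ are either all equal or two consecutive integers. Hence $s\ne 0$, and the identical reasoning applied to the inner boundary gives $s'\ne 0$.

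\textbf{Where the difficulty lies.}
The merging observation for $CS(s)$ and the short computation of $CS\bigl(a^{-1}b^{-1}(aba^{-1}b^{-1})^{n-1}\bigr)$ are routine. I expect the genuinely delicate point to be the degenerate single-face case $t=1$: one must verify that it can occur only when $p=2$ and then pin down $\phi(\partial D_1^{\pm})$ exactly, so that the long constant-sign blocks forced by Lemma~\ref{lem:after_claim1} collide with the fact that $|u_0|=2$. Once that case is disposed of, Lemma~\ref{lem:after_claim2} follows immediately from Lemmas~\ref{lem:claim1}, \ref{lem:after_claim1}, \ref{lem:inside_orbit} and \ref{lem:properties}.
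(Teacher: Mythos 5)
Your proof is correct and follows essentially the same route as the paper's: Lemma~\ref{lem:after_claim1} together with the block-merging observation gives a term $\ge p$ in $CS(s)$ and $CS(s')$, Lemma~\ref{lem:inside_orbit} then disposes of all nonzero slopes in $I_1(1/p)\cup I_2(1/p)$, and the case $s=0$ is excluded by collapsing $M$ to a single face with $p=2$ and reading off the inner boundary label. The only (harmless) divergence is the endgame of that last step: you correctly account for the cyclic merging of the first and last blocks of $\phi(\partial D_1^-)$, obtaining $\lp 4,(2n-3)\langle 2\rangle\rp$ and contradicting Lemma~\ref{lem:properties}, whereas the paper records the linear sequence $\lp (2n-1)\langle p\rangle\rp$ and invokes a contradiction from there; your version is in fact the more careful one.
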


\begin{proof}
By Lemma~\ref{lem:after_claim1}, both $CS(s)$ and $CS(s')$
contain a term bigger than or equal to $p$.
If $s, s' \neq 0$, then the assertion follows from Lemma~\ref{lem:inside_orbit}.
In the remainder, we show that $s, s' \neq 0$.
Suppose that this does not hold, say $s=0$.
Then, since $CS(u_0)=\lp 2 \rp$ by Remark~\ref{rem:epsilon}
and since both $CS(s)$ and $CS(s')$ contain a term
bigger than or equal to $p$,
we see that the annular diagram consists of only one $2$-cell, $D$,
and $CS(\phi(\alpha))=CS(\phi(\partial D^+))=\lp p \rp$, where $p=2$.
Then $CS(\phi(\delta))=CS(\phi(\partial D^-))=\lp (2n-1) \langle p\rangle \rp$, a contradiction.
\end{proof}

\begin{lemma}
\label{lem:claim2}
$s, s' \notin I_2(1/p;n)$.
\end{lemma}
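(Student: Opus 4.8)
Suppose, for a contradiction, that $s\in I_2(1/p;n)$; the case $s'\in I_2(1/p;n)$ is handled in the same way after interchanging the roles of the outer and inner boundary components of $M$ (both $s$ and $s'$ lie in $I_1(1/p;n)\cup I_2(1/p;n)$, so Lemmas~\ref{lem:claim1}, \ref{lem:after_claim1}, \ref{lem:after_claim2} apply symmetrically). By Lemma~\ref{lem:after_claim2} we then have $s\in I_2(1/p;n)\setminus I_2(1/p)$, so Lemma~\ref{lem:outside_orbit2} gives that $CS(s)$ consists only of the terms $p-1$ and $p$, contains the subsequence $(p-1,p,p-1)$, and does \emph{not} contain $(p,p)$. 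By Lemma~\ref{lem:claim1} the diagram $M$ is shaped as in Figure~\ref{fig.layer}(a): a cyclic family of faces $D_1,\dots,D_t$ glued consecutively at the degree-$4$ vertices, with $\phi(\partial D_i)$ cyclically equal to $u_{1/p}^{\pm n}$; since $S(1/p)=(p,p)$, i.e. $CS(u_{1/p}^n)=\lp 2n\langle p\rangle\rp$, every maximal run of each $\phi(\partial D_i)$ has length exactly $p$. Here $\phi(\partial D_i^+)$ is a subword of $(u_s)$, $\phi(\partial D_i^-)$ a subword of $(u_{s'}^{\pm1})$, and $|\phi(\partial D_i^+)|+|\phi(\partial D_i^-)|=2np$ (Notation~\ref{notation:cell_boundary}, Corollary~\ref{cor:structure}).

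First I would bound the outer arcs. Every term of $S(\phi(\partial D_i^+))$ other than the first and the last is a maximal run of both $\phi(\partial D_i)$ and $u_s$, hence has length exactly $p$; two such consecutive terms would put $(p,p)$ into $CS(s)$, which is impossible, and a first or last term equal to $p$ is likewise a genuine length-$p$ run of $u_s$ (as $u_s$ has no run of length $>p$) so cannot be adjacent to an interior $p$. Combined with Lemma~\ref{lem:after_claim1} (which forces a term $p$ to occur), $S(\phi(\partial D_i^+))$ is one of $(p)$, $(a,p)$, $(p,b)$, $(a,p,b)$ with $a,b\le p-1$; in every case $|\phi(\partial D_i^+)|\le 3p-2$. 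Dually $|\phi(\partial D_i^-)|\ge 2np-(3p-2)=(2n-3)p+2$, so $\phi(\partial D_i^-)$ has at least $2n-2$ maximal runs, all but possibly the first and last being full length-$p$ runs that are genuine maximal runs of $u_{s'}$. Here I would also invoke the piece structure: by Lemma~\ref{lem:maximal_piece}(1) every piece of $u_{1/p}^n$ lies inside a single length-$p$ run and has length $\le p-1$, and by Convention~\ref{convention}(3) this constrains how the runs of $\phi(\partial D_i^{\pm})$ can be split at the degree-$4$ vertices.

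The endgame is to turn this into a contradiction with Lemma~\ref{lem:connection}, which forbids $CS(s')$ from containing $((2n-2)\langle p\rangle)$ as a subsequence. The crude count above only yields $((2n-4)\langle p\rangle)$ in $CS(s')$ from the interior of a single inner arc, so one has to recover the two missing runs by analyzing the two extreme runs of $\phi(\partial D_i^-)$ at the degree-$4$ vertices $v_i,v_{i+1}$: using the label data there (equivalently, the piece structure) to show that at the junctions these truncated runs combine with the neighbouring inner arcs' end runs into full length-$p$ runs of $u_{s'}$, so that the block of consecutive $p$'s in $CS(s')$ actually has length $\ge 2n-2$. Where this local merging instead produces a run of length $>p$, one gets that $CS(s')$ has a term $\ge p+1$, which (with $s'\in I_2(1/p;n)$ excluded since $CS(s')$ would then have only terms $\le p$) forces $s'\in I_1(1/p;n)\setminus I_1(1/p)$, and one then plays the $((2n-4)\langle p\rangle)$ block against Lemma~\ref{lem:outside_orbit} (and, when $s'$ falls in its case (iii) with $m'\ge p+1$ so that no $p$ can appear, directly against the presence of a term $p$), with the residual small cases $p=2$ and $n=2$ treated by hand via the slope-recovery rule of Remark~\ref{remark:recovering the slope}. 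The main obstacle is precisely this last paragraph: closing the gap of $2$ between the $2n-4$ consecutive $p$'s that the length estimate gives for free and the $2n-2$ that Lemma~\ref{lem:connection} excludes, which cannot be done by counting alone and requires the fine combinatorics of pieces and of the degree-$4$ vertices — and this is exactly where small $n$ and small $p$ become genuinely delicate.
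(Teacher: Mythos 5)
Your setup and first half are sound and essentially match the paper's: reducing to $s\in I_2(1/p;n)\setminus I_2(1/p)$, using Lemma~\ref{lem:outside_orbit2} to get the subword $w$ with $S(w)=(p-1,p,p-1)$ and no $(p,p)$ in $CS(s)$, locating a face $D$ with $\phi(\partial D^+)$ inside $w$ (so $S(\phi(\partial D^+))$ has at most three terms), and concluding that $S(\phi(\partial D^-))$ contains a block $((2n-3)\langle p\rangle)$ of genuine maximal runs of $(u_{s'})$ (your length count gives only $2n-4$; counting runs of $\partial D$ directly gives $2n-3$). The genuine gap is the endgame. Your route (a) — merging the truncated end runs of $\phi(\partial D^-)$ with the neighbours' to reach $((2n-2)\langle p\rangle)$ and contradict Lemma~\ref{lem:connection} — fails in general: the merged term at a degree-$4$ vertex equals $2p-a-a'$ where $a,a'$ are the adjacent end terms of the two outer arcs, and since $CS(s')$ consists of $p$ and $p+1$ only, this merged term is allowed to be $p+1$, in which case the block of $p$'s stops at exactly $2n-3$ and Lemma~\ref{lem:connection} is not violated. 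Your route (b) — playing the block against Lemma~\ref{lem:outside_orbit} — also does not close: that lemma asserts the \emph{existence} of some subsequence $(p+c,d\langle p\rangle,p+c')$ with $d\le 2n-4$, not that every maximal block of $p$'s has length $\le 2n-4$, so a block of length $2n-3$ or $2n-4$ in $CS(s')$ is not in itself contradictory. You correctly identify this as the obstacle but do not supply the missing idea.

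The paper's resolution is a two-step pivot you do not have. First, since $CS(s')$ consists of $p$ and $p+1$ and is not constant, the block sits as $(p+1,d\langle p\rangle,p+1)$ with $d\ge 2n-3$; the continued-fraction machinery (Lemmas~\ref{lem:properties} and~\ref{lem:induction1}) together with $s'\in I_1(1/p;n)$, i.e.\ $s'<[p,2n-2]$, forces $d=2n-3$ and $s'=[p,2n-3,a_3,\dots,a_t]$ with $t\ge 3$. Second — and this is the key reversal — for such $s'$ the cyclic sequence $CS(s')$ must \emph{also} contain a block $(p+1,(2n-4)\langle p\rangle,p+1)$; the same ``no face can straddle it'' argument then produces a face whose \emph{inner} arc lies inside the corresponding word, whence its outer arc fully contains two consecutive runs of $\partial D$ and $CS(s)$ acquires $(p,p)$, contradicting Lemma~\ref{lem:outside_orbit2}. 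So the contradiction is ultimately bounced back to $s$, not extracted from $s'$ via Lemma~\ref{lem:connection}; without this second step (or an equivalent), your argument does not terminate, and no separate treatment of small $p$ or $n$ is needed once it is in place.
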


\begin{proof}
Suppose on the contrary that $s$ or $s'$ is contained in $I_2(1/p;n)$.
Without loss of generality, assume that $s \in I_2(1/p;n)$.
Then by Lemma~\ref{lem:after_claim2}, $s \in I_2(1/p;n) \setminus I_2(1/p)$.
By Lemma~\ref{lem:outside_orbit2},
$CS(\phi(\alpha))=CS(s)$ contains $(p-1, p, p-1)$ as a subsequence
and does not contain $(p,p)$ as a subsequence.
Let $w$ be the subword of $(u_s)$ such that
$S(w)=(p-1, p, p-1)$.
Then arguing as in the proof of the claim in the proof of Lemma~\ref{lem:claim1},
we see that there is a face $D$ such that $\phi(\partial D^+)$ is a subword of $w$,
so that $S(\phi(\partial D^-))$ contains $(\ell, (2n-3) \langle p\rangle, \ell')$
as a subsequence for some $\ell, \ell' \in \ZZ_+$.
This implies that $CS(\phi(\delta))=CS(s')$ contains $(p+1, d \langle p \rangle, p+1)$
as a subsequence for some $d \ge 2n-3$.
Let $s'=[a_1,a_2,\dots, a_t]$ be a continued fraction expansion.
Then since $CS(s')$ consists of $p$ and $p+1$, we see $a_1=p$
by Lemma~\ref{lem:properties}.
If $a_2 \ge 2$, then since $CT(s')$ contains $d$ as a component,
we see by using Lemmas~\ref{lem:properties} and \ref{lem:induction1}
that $a_2-1=d$ or both $a_2-1=d-1$
and $t \ge 3$, i.e.,
$a_2=d+1$ or both $a_2=d$ and $t \ge 3$.
Since $s' \in I_1(1/p;n)\cup I_2(1/p;n)$, we must have $a_2\le 2n-3$,
and therefore
$a_2=d=2n-3$ and $t \ge 3$.
Also if $a_2=1$, then by Lemma~\ref{lem:properties},
$CS(s')$ does not contain $(p,p)$ as a subsequence.
So we have $d=1$, and hence $a_2=d=2n-3$.
In this case, clearly $t \ge 3$.
Thus in either case, $s'=[p,2n-3, a_3, \dots, a_t]$ with $t \ge 3$,
i.e., $[p, 2n-3] < s'< [p, 2n-2]$.
Thus, by Lemmas~\ref{lem:properties} and \ref{lem:induction1},
$CS(s')$ contains $(p+1, (2n-4) \langle p \rangle, p+1)$ too
as a subsequence.
Arguing similarly
as in the proof of the claim in the proof of Lemma~\ref{lem:claim1},
we see that there is a face $D$ such that $\phi(\partial D^-)$ is a subword of
the word corresponding to the subsequence $(p+1, (2n-4) \langle p \rangle, p+1)$.
Since $CS(\phi(\partial D))=\lp 2n \langle p \rangle \rp$,
this yields that $S(\phi(\partial D^+))$ must contain
$(p, p)$ as a subsequence,
a contradiction.
\end{proof}

\begin{lemma}
\label{lem:every_term}
Every term of $CS(s)$ and $CS(s')$ is greater than or equal to $p$.
\end{lemma}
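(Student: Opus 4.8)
The plan is to reduce the statement to an elementary fact about continued fractions, using the location of $s$ and $s'$ already pinned down in Lemmas~\ref{lem:after_claim2} and \ref{lem:claim2}. First I would observe that, since $s,s'\in I_1(1/p;n)\cup I_2(1/p;n)$ by hypothesis, since $s,s'\notin I_1(1/p)\cup I_2(1/p)$ by Lemma~\ref{lem:after_claim2}, since $s,s'\notin I_2(1/p;n)$ by Lemma~\ref{lem:claim2}, and since $I_1(1/p)=\{0\}$, both $s$ and $s'$ must lie in $I_1(1/p;n)\setminus I_1(1/p)$; that is, $0<s,s'<[p,2n-2]$.

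Next I would show that the first partial quotient of $s$ is at least $p$ (the same argument applies to $s'$). Writing $s=[l_1,\dots,l_t]$, we have $s\ge 1/(l_1+1)$, so if $l_1\le p-1$ then $s\ge 1/p$. But $1/p=(2n-2)/\bigl(p(2n-2)\bigr)>(2n-2)/\bigl(p(2n-2)+1\bigr)=[p,2n-2]$, contradicting $s<[p,2n-2]$; hence $l_1\ge p$. (Alternatively, this is immediate from the continued-fraction trichotomy recorded in the proof of Lemma~\ref{lem:outside_orbit}, each of whose three cases has $l_1\ge p$.)

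Finally I would invoke Lemma~\ref{lem:properties}: for $s=[l_1,\dots,l_t]$ with $0<s<1$, the sequence $CS(s)$ equals $(l_1,l_1)$ when $t=1$, and when $t\ge 2$ every term of $CS(s)$ is $l_1$ or $l_1+1$; in either case every term of $CS(s)$ is $\ge l_1\ge p$. The same reasoning applied to $s'$ yields the assertion. I do not foresee a genuine difficulty here; the one point deserving a moment's attention is the inequality $1/p>[p,2n-2]$, which is exactly what guarantees that the half-open interval $I_1(1/p;n)=[0,[p,2n-2])$ excludes every slope whose first partial quotient is at most $p-1$, and everything else is a direct appeal to Lemmas~\ref{lem:properties}, \ref{lem:after_claim2} and \ref{lem:claim2}.
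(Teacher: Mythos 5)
Your proof is correct and follows essentially the same route as the paper: both arguments first pin $s,s'$ into $I_1(1/p;n)\setminus I_1(1/p)$ via Lemmas~\ref{lem:after_claim2} and \ref{lem:claim2}, and then conclude from Lemma~\ref{lem:properties} that all terms of $CS(s)$ and $CS(s')$ are at least $p$. The only (harmless) difference is the middle step: the paper cites Lemma~\ref{lem:outside_orbit} to produce a term greater than $p$, whereas you verify directly, by the elementary inequality $s\ge 1/(l_1+1)$ and $1/p>[p,2n-2]$, that the first partial quotient is at least $p$ --- which, as you note, is also immediate from the trichotomy in the proof of Lemma~\ref{lem:outside_orbit}.
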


\begin{proof}
By Lemmas~\ref{lem:after_claim2} and \ref{lem:claim2},
$s, s' \in I_1(1/p;n) \setminus I_1(1/p)$.
Then by Lemma~\ref{lem:outside_orbit},
$CS(s)$ and $CS(s')$ contain a term greater than $p$.
So by Lemma~\ref{lem:properties},
every term of $CS(s)$ and $CS(s')$ is greater than or equal to $p$.
\end{proof}

\begin{lemma}
\label{lem:ending_cliam1}
Neither $CS(s)$ nor $CS(s')$ can contain
a term of the form $p+c$ with $1 \le c \le p-1$.
\end{lemma}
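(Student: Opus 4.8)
The plan is to argue by contradiction. Since conjugacy is symmetric, we may work with a reduced nontrivial annular $R$-diagram whose outer boundary label is $u_s$ and inner boundary label is $u_{s'}^{\pm1}$, or with one where the roles of $s$ and $s'$ are reversed; so it suffices to suppose that $CS(s)$ contains a term $p+c$ with $1\le c\le p-1$ and derive a contradiction, the case of $CS(s')$ being entirely analogous (interchanging the roles of the outer arcs $\partial D_i^+$ and the inner arcs $\partial D_i^-$). By Lemma~\ref{lem:claim1} the diagram $M$ is shaped as in Figure~\ref{fig.layer}(a); write its $2$-cells as $D_1,\dots,D_t$ with outer and inner boundary cycles $\alpha=\partial D_1^+\cdots\partial D_t^+$ and $\beta^{-1}=\partial D_1^-\cdots\partial D_t^-$ as in Notation~\ref{notation:cell_boundary}, so that $\phi(\partial D_i)\equiv\phi(\partial D_i^+)\phi(\partial D_i^-)^{-1}$ has cyclic $S$-sequence $\lp 2n\langle p\rangle\rp$ for every $i$, and recall from Lemma~\ref{lem:after_claim1} that $S(\phi(\partial D_i^{\pm}))$ always contains a term $p$, so every one of these arcs is long enough to contain a full $p$-block. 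Let $B$ be the maximal same-exponent block of length $p+c$ occurring in $(u_s)\equiv(\phi(\alpha))$.

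First I would show $B$ must spill over a vertex joining two consecutive cells: if $B$ were contained in a single $\phi(\partial D_i^+)$, then $\phi(\partial D_i)$ would contain a same-exponent run of length $p+c>p$, impossible since every block of $\phi(\partial D_i)$ has length $p$. Let $D_i,D_{i+1},\dots,D_j$ (cyclic indices) be the cells whose outer arcs meet $B$. Any $D_\ell$ strictly between $D_i$ and $D_j$ has $\phi(\partial D_\ell^+)\subseteq B$, hence $\phi(\partial D_\ell^+)$ is a single run and so, by Lemma~\ref{lem:after_claim1}, a full $p$-block; writing $a,a'\ (\ge1)$ for the lengths of the runs of $\phi(\partial D_i^+)$ and $\phi(\partial D_j^+)$ lying in $B$, one gets $|B|=a+(j-i-1)p+a'=p+c$, and since $c\le p-1$ this forces $j-i\in\{1,2\}$. (The degenerate configurations in which $t$ is so small that $B$ wraps around are handled separately, using Lemma~\ref{lem:outside_orbit}, which forces $CS(s)$ to have at least two terms exceeding $p$ and hence rules out $u_s$ arising as a closed-up single arc.)

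The heart of the argument is a local analysis at each vertex $w$ where $B$ passes from one cell to the next. If $\phi(\partial D_\ell^+)$ ends (resp.\ begins) at $w$ with a run of length $e<p$ contained in $B$, then, this run being maximal in $\phi(\partial D_\ell^+)$ and properly contained in it, the block of $\phi(\partial D_\ell)$ through it must be completed inside $\phi(\partial D_\ell^-)^{-1}$, so the run of $\phi(\partial D_\ell^-)$ adjacent to $w$ has length exactly $p-e$ and the exponent opposite to that of $B$; if instead $e=p$, that run is a complete block of $\phi(\partial D_\ell)$ and the adjacent run of $\phi(\partial D_\ell^-)$ is then a complete $p$-block with exponent that of $B$. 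Carrying this out at the ends of $B$ and reading the inner boundary $\phi(\beta^{-1})=\phi(\partial D_1^-)\cdots\phi(\partial D_t^-)$ across the crossing vertices, I expect exactly two outcomes. When $j-i=2$, the middle arc $\phi(\partial D_{i+1}^-)$ begins with a $p$-block whose exponent is opposite to the run of $\phi(\partial D_i^-)$ preceding it along $\tau$, so that block together with the next $2n-3$ interior blocks are genuine blocks of $(u_{s'}^{\pm1})$; hence $CS(s')$ contains $((2n-2)\langle p\rangle)$ as a subsequence, contradicting Lemma~\ref{lem:connection}. When $j-i=1$, comparing the runs of $\phi(\partial D_i^-)$ and $\phi(\partial D_{i+1}^-)$ meeting at the crossing vertex: either both have length $<p$ and the same exponent and merge into a genuine block of $(u_{s'}^{\pm1})$ of length $(p-a)+(p-b)=p-c<p$, or one of them is itself a genuine block of length $<p$; either way this contradicts Lemma~\ref{lem:every_term}.

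The step I expect to be the main obstacle is precisely this local bookkeeping: one must keep straight which runs are maximal in which word ($\phi(\partial D_\ell^+)$, $\phi(\partial D_\ell^-)$, $\phi(\partial D_\ell)$, $(u_s)$, $(u_{s'}^{\pm1})$) and which are genuinely complete blocks of $(u_{s'}^{\pm1})$ — this is exactly where Lemma~\ref{lem:after_claim1}, guaranteeing each arc contains a full $p$-block, is indispensable — and one must verify that the case split $j-i=1$ versus $j-i=2$, refined by $a=p$ versus $a<p$ (and similarly for $a'$), is exhaustive, absorbing the small-$t$ and wrapping configurations into this scheme or dispatching them by the separate count mentioned above.
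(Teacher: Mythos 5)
Your proposal is correct and follows essentially the same route as the paper: locate the cells whose outer arcs the overlong block spans, use Lemma~\ref{lem:after_claim1} and the fact that each $CS(\phi(\partial D))=\lp 2n\langle p\rangle\rp$ to read off the complementary runs on the inner boundary, and contradict Lemma~\ref{lem:every_term} (or Lemma~\ref{lem:connection}). The only difference is that you explicitly treat the configuration where a middle cell contributes a full $p$-block to the run (so that the block spans three cells), a case the paper's condition that the merged term equal $\ell_1+\ell_2$ quietly absorbs into its $\ell_2=p$ subcase; your extra care there is sound, not a divergence of method.
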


\begin{proof}
Suppose on the contrary that $CS(\phi(\alpha))=CS(s)$ or $CS(\phi(\delta))=CS(s')$,
say $CS(s)$, contains a term of the form $p+c$ with $1 \le c \le p-1$.
Then there exist two $2$-cells $D_1$ and $D_2$ in $M$
as illustrated in Figure~\ref{fig.theorem6_1_1}(a),
which follows Convention~\ref{con:figure} below,
such that
\begin{align*}
&\text{(i) $\partial D_1^+ \partial D_2^+$ is a subpath of an outer boundary cycle of $M$;} \\
&\text{(ii) $S(\phi(\partial D_1^+))=(\dots, \ell_1)$,
where $1 \le \ell_1 \le p$;} \\
&\text{(iii) $S(\phi(\partial D_2^+))=(\ell_2, \dots)$,
where $1 \le \ell_2 \le p$; and}\\
&\text{(iv) $S(\phi(\partial D_1^+\partial D_2^+))=(\dots, \ell_1+\ell_2, \dots)$,
where $\ell_1+\ell_2=p+c$.}
\end{align*}
Since $p+c < 2p$, $\ell_1<p$ or $\ell_2<p$.
Here, if $\ell_1<p$ and $\ell_2<p$, then
since both $S(\phi(\partial D_1^-))$ and $S(\phi(\partial D_2^-))$
contain a term $p$ by Lemma~\ref{lem:after_claim1},
we see that $S(\phi(\partial D_1^-\partial D_2^-))$ contains a subsequence $(p, 2p-(\ell_1+\ell_2), p)$
as shown in Figure~\ref{fig.theorem6_1_1}(b).
So $CS(\phi(\delta))=CS(s')$ contains a term $2p-(\ell_1+\ell_2)=p-c<p$,
which contradicts Lemma~\ref{lem:every_term}.
On the other hand,
if $\ell_1<p$ and $\ell_2=p$, then
$S(\phi(\partial D_1^-\partial D_2^-))$ contains a subsequence $(p, p-\ell_1, p)$
as shown in Figure~\ref{fig.theorem6_1_1}(c).
So $CS(\phi(\delta))=CS(s')$ contains a term $p-\ell_1<p$,
again a contradiction to Lemma~\ref{lem:every_term}.
Obviously a similar contradiction is obtained if $\ell_1=p$ and $\ell_2<p$
as shown in Figure~\ref{fig.theorem6_1_1}(d).
\end{proof}

\begin{convention}
\label{con:figure}
{\rm
Recall that $M$ is shaped as in Figure~\ref{fig.layer}(a).
In Figures~\ref{fig.theorem6_1_1} and \ref{fig.theorem6_1_2},
the upper complementary region is regarded as the unbounded region
of $\RR^2-M$. Thus an outer boundary cycle runs the upper boundary from left to right.
Also the change of directions of consecutive arrowheads
represents the change from positive (negative, resp.) words
to negative (positive, resp.) words, and
a dot represents a vertex whose position is clearly identified.
Furthermore, a number such as $\ell_1$, $\ell_2$, $p$, etc
represents the length of the corresponding positive
(or negative) word.
}
\end{convention}

\begin{figure}[h]
\includegraphics{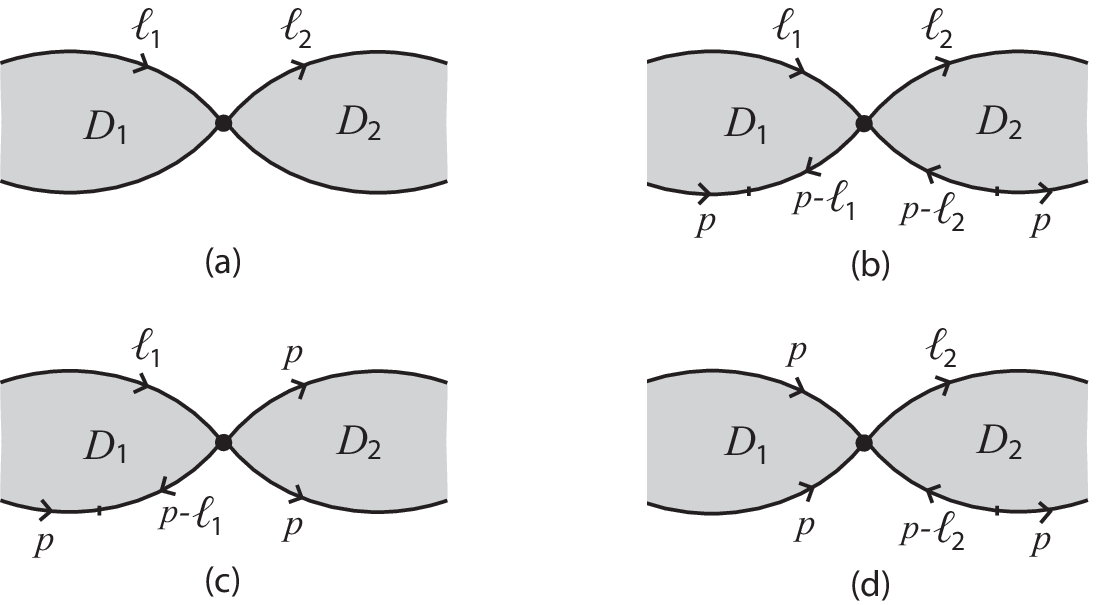}
\caption{Lemma~\ref{lem:ending_cliam1}}
\label{fig.theorem6_1_1}
\end{figure}

\begin{lemma}
\label{lem:ending_cliam2}
Neither $CS(s)$ nor $CS(s')$ can contain a term greater than $2p$.
\end{lemma}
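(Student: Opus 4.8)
The plan is to continue the proof by contradiction and to show that a term of $CS(s)$ or of $CS(s')$ exceeding $2p$ would force a term equal to $p$ to appear in $CS(s')$ (respectively in $CS(s)$), which cannot happen.

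The first step is to record the following consequence of the lemmas already in hand: \emph{every term of $CS(s)$ and of $CS(s')$ is at least $2p$}. Indeed, write $s'=[l_1,\dots,l_{t'}]$ (recall $s'\in I_1(1/p;n)\setminus I_1(1/p)$ by Lemmas~\ref{lem:after_claim2} and \ref{lem:claim2}, so in particular $0<s'<1/p$). If $t'\ge 2$, then by Lemma~\ref{lem:properties}(2) the only terms of $S(s')$ are $l_1$ and $l_1+1$ and both occur; by Lemma~\ref{lem:every_term} we have $l_1\ge p$; and if $l_1=p$ then $l_1+1=p+1$ is a term, while if $p+1\le l_1\le 2p-1$ then $l_1$ is a term, in either case contradicting Lemma~\ref{lem:ending_cliam1}. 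Hence $l_1\ge 2p$. If $t'=1$, then $s'=1/l_1$ with $l_1>p$, so $CS(s')=\lp l_1,l_1\rp$, and Lemma~\ref{lem:ending_cliam1} again forces $l_1\ge 2p$. The identical argument applies to $s$.

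Now suppose, for contradiction, that some term of $CS(s)$ or of $CS(s')$ is greater than $2p$. Interchanging $s$ and $s'$ if necessary (which merely interchanges the roles of the outer and inner boundaries of $M$), we may assume it is a term of $CS(\phi(\alpha))=CS(s)$, and we let $w_0$ be the syllable of $\phi(\alpha)$ realising it, so $|w_0|=\ell>2p$. By Lemma~\ref{lem:claim1}, $M$ is shaped as in Figure~\ref{fig.layer}(a); adopt Notation~\ref{notation:cell_boundary}, so that $\phi(\alpha)=\phi(\partial D_1^+)\cdots\phi(\partial D_t^+)$ and, for each $i$, $\phi(\partial D_i^+)\bigl(\phi(\partial D_i^-)\bigr)^{-1}$ is a cyclic conjugate of $u_{1/p}^{\pm n}$, whose cyclic $S$-sequence is $\lp 2n\langle p\rangle\rp$ because $S(1/p)=(p,p)$. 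In particular every syllable of each $\phi(\partial D_i^{\pm})$ has length at most $p$, so a single-sign subword of $\phi(\alpha)$ consisting of a suffix of one block $\phi(\partial D_i^+)$ together with a prefix of the next block has length at most $2p$. Since $\ell>2p$, the word $w_0$ must therefore contain some entire block $\phi(\partial D_k^+)$ as a subword, so $\phi(\partial D_k^+)$ is single-sign. By Lemma~\ref{lem:after_claim1}, $S(\phi(\partial D_k^+))$ contains the term $p$; being single-sign, $S(\phi(\partial D_k^+))=(p)$, so $\phi(\partial D_k^+)$ is exactly one of the $2n$ length-$p$ syllables of $u_{1/p}^{\pm n}$. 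Consequently $\bigl(\phi(\partial D_k^-)\bigr)^{-1}$ is the concatenation of the remaining $2n-1$ syllables, and hence $S(\phi(\partial D_k^-))=((2n-1)\langle p\rangle)$. Since $2n-1\ge 3$, this sequence has an interior term, necessarily equal to $p$; and since $\phi(\partial D_k^-)$ occurs as a subword of $\phi(\partial D_1^-)\cdots\phi(\partial D_t^-)$, which cyclically equals $u_{s'}^{\pm 1}$, this interior term is a term of $CS(s')$. This contradicts the first step, which is the desired contradiction.

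I expect the real work to lie not in the skeleton above but in the accompanying syllable bookkeeping: confirming that a single-sign run carried by two consecutive blocks cannot exceed length $2p$, and that the complement of a single full syllable along $\partial D_k$ genuinely has $S$-sequence $((2n-1)\langle p\rangle)$ with its middle $p$'s interior, so that one of them descends to a term of $CS(s')$. A picture drawn according to Convention~\ref{con:figure}, in the spirit of Figure~\ref{fig.theorem6_1_1}, should make these checks routine. The case in which the large term lies in $CS(s')$ is entirely symmetric, using the inner boundary layer of $M$ together with the inequality ``every term of $CS(s)$ is $\ge 2p$'' established in the first step.
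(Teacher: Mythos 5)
Your proof is correct, and all the lemmas you invoke (Lemmas~\ref{lem:after_claim1}, \ref{lem:after_claim2}, \ref{lem:claim2}, \ref{lem:every_term}, \ref{lem:ending_cliam1}) are indeed available at this point in the argument. The geometric core is the same as in the paper: a syllable of length greater than $2p$ must swallow an entire block $\phi(\partial D_k^+)$, which by Lemma~\ref{lem:after_claim1} forces $S(\phi(\partial D_k^+))=(p)$ and hence $S(\phi(\partial D_k^-))=((2n-1)\langle p\rangle)$; this is exactly the role played by the face $D_2$ in the paper's three-face configuration. Where you diverge is in how the contradiction is extracted. The paper keeps only the bound ``every term $\ge p$'' from Lemma~\ref{lem:every_term} and therefore must examine the neighbouring face $D_1$ as well, splitting into the cases $\ell_1<p$ and $\ell_1=p$ to produce either a term $p-\ell_1<p$ in $CS(s')$ or the incompatible pair of terms $p$ and $2p+\ell$ (the latter ruled out by Lemma~\ref{lem:properties}). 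You instead first upgrade Lemma~\ref{lem:every_term} to ``every term of $CS(s)$ and $CS(s')$ is at least $2p$'' by combining it with Lemma~\ref{lem:ending_cliam1} and the two-value structure of $CS$ from Lemma~\ref{lem:properties}; with that stronger bound, the single interior term $p$ of $((2n-1)\langle p\rangle)$ (which exists since $2n-1\ge 3$, and which genuinely descends to a term of $CS(s')$ because interior terms of a subword's $S$-sequence survive into the cyclic $S$-sequence) already gives the contradiction, with no case analysis on adjacent faces. Your preliminary strengthening also makes transparent why the section can conclude with $CS(s)=CS(s')=\lp 2p,2p\rp$. Both routes are valid; yours trades the paper's local case analysis for a slightly stronger arithmetic preprocessing of the admissible terms.
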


\begin{proof}
Suppose on the contrary that $CS(\phi(\alpha))=CS(s)$ or $CS(\phi(\delta))=CS(s')$,
say $CS(s)$, contains a term greater than $2p$.
Then by Lemma~\ref{lem:after_claim1},
there exist three $2$-cells $D_1$, $D_2$ and $D_3$ in $M$
as illustrated in Figure~\ref{fig.theorem6_1_2}(a),
which follows Convention~\ref{con:figure}, such that
\begin{align*}
&\text{(i) $\partial D_1^+ \partial D_2^+ \partial D_3^+$ is a subpath
of an outer boundary cycle of $M$;} \\
&\text{(ii) $S(\phi(\partial D_1^+))=(\dots, \ell_1)$,
where $1 \le \ell_1 \le p$;} \\
&\text{(iii) $S(\phi(\partial D_2^+))=(p)$;} \\
&\text{(iv) $S(\phi(\partial D_3^+))=(\ell_2, \dots)$,
where $1 \le \ell_2 \le p$; and} \\
&\text{(v) $S(\phi(\partial D_1^+\partial D_2^+ \partial D_3^+))=(\dots, \ell_1+p+\ell_2, \dots)$,
where $\ell_1+p+\ell_2>2p$.}
\end{align*}
Here, if $\ell_1<p$, then
since $S(\phi(\partial D_1^-))$ contains a term $p$ by Lemma~\ref{lem:after_claim1},
and since $S(\phi(\partial D_2^-))=((2n-1) \langle p \rangle)$,
we see that $S(\phi(\partial D_1^-\partial D_2^-))$ contains a subsequence $(p, p-\ell_1, p)$
as shown in Figure~\ref{fig.theorem6_1_2}(b).
So $CS(\phi(\delta))=CS(s')$ contains a term $p-\ell_1<p$,
which contradicts Lemma~\ref{lem:every_term}.
On the other hand,
if $\ell_1=p$, then
$S(\phi(\partial D_1^-\partial D_2^-))$ contains a subsequence $(2p, p, p)$
as shown in Figure~\ref{fig.theorem6_1_2}(c).
So $CS(\phi(\delta))=CS(s')$ contains both a term $p$
and a term $2p+\ell$ with $\ell \ge 0$.
But since $2p+\ell>p+1$, we obtain a contradiction to Lemma~\ref{lem:properties}.
\end{proof}

\begin{figure}[h]
\includegraphics{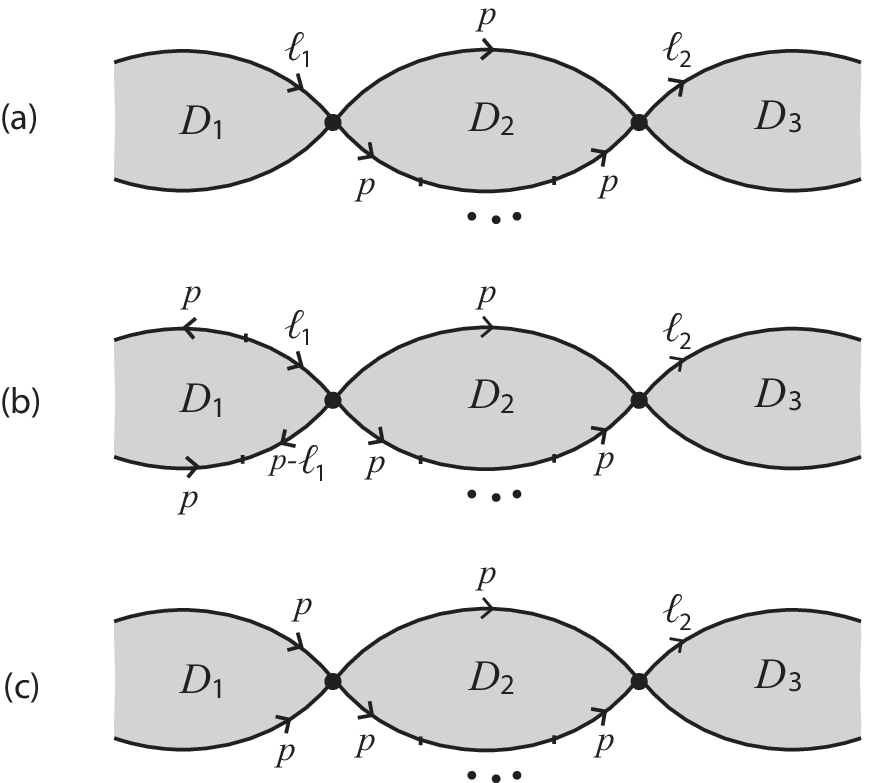}
\caption{Lemma~\ref{lem:ending_cliam2}}
\label{fig.theorem6_1_2}
\end{figure}

By Lemmas~\ref{lem:every_term}, \ref{lem:ending_cliam1} and \ref{lem:ending_cliam2},
the only possibility is that
$CS(s)=CS(s')=\lp 2p, 2p\rp$,
but this is an obvious contradiction, because $s$ and $s'$ are distinct.

The proof of Main Theorem~\ref{thm:conjugacy}(1)
for the case $r=1/p$
is now completed.
\qed

\section{Proof of Main Theorem~\ref{thm:conjugacy}(2) and (3) for the case $r=1/p$}
\label{sec:proof of main theorem(2)}

Main Theorem~\ref{thm:conjugacy}(3) for the case $r=1/p$
can be proved by simply replacing a non-integral rational number $r$ with $1/p$
and by using $S_1=\emptyset$ and $S_2=(p)$ in \cite[Section~4]{lee_sakuma_10}.
So we defer its proof to \cite{lee_sakuma_10}.

It remains to prove Main Theorem~\ref{thm:conjugacy}(2) for the case $r=1/p$.
Suppose on the contrary that there exists a rational number $s$
in $I_1(1/p;n)\cup I_2(1/p;n)$ for which the simple loop $\alpha_s$ is peripheral in $\orbs(1/p;n)$.
Then $u_s$ is conjugate to $a^{\pm t}$ or $b^{\pm t}$ in $\Hecke(1/p;n)$ for some integer $t \ge 1$.
We assume that $u_s$ is conjugate to $a^{\pm t}$ in $\Hecke(1/p;n)$.
(The case when $u_s$ is conjugate to $b^{\pm t}$ in $\Hecke(1/p;n)$ is treated similarly.)
By Lemma~\ref{lem:lyndon_schupp}, there is a reduced nontrivial annular
diagram $M$ over $\Hecke(1/p;n)=\langle a, b \svert u_{1/p}^n \rangle$ with
$(\phi(\alpha)) \equiv (u_s)$ and $(\phi(\delta)) \equiv (a^{\pm t})$,
where $\alpha$ and $\delta$ are, respectively, outer and inner boundary cycles of $M$.
Furthermore, by Corollary~\ref{small_cancellation_condition_2},
$M$ is a $[4,4n]$-map.

Let the outer and inner boundaries of $M$ be denoted by $\sigma$ and $\tau$, respectively.

\begin{lemma}
\label{lem:simple_boundary}
The outer and inner boundaries $\sigma$ and $\tau$ are simple.
\end{lemma}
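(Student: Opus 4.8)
The plan is to adapt the proof of Lemma~\ref{lem:degree2_vertices} almost verbatim. Assume to the contrary that $\sigma$ or $\tau$ is not simple. Then, exactly as there, $M$ contains an extremal disk $J$ joined to the rest of $M$ at a single cut vertex $v_0$; since $M$ is a $[4,4n]$-map by Corollary~\ref{small_cancellation_condition_2}, the submap $J$ is a connected, simply connected $[4,4n]$-map with $\partial J\setminus\{v_0\}\subseteq\partial M$. As in Lemma~\ref{lem:degree2_vertices}, $\partial M$ has no vertex of degree $1$, and no vertex of degree $3$ occurs on the outer boundary $\sigma$ (the argument using that $\sigma$ carries a cyclic $2$-bridge word goes through unchanged); the inner boundary will play no role in any degree count below. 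Since the boundary cycle of an extremal disk is a sub-arc of a single boundary cycle of $M$, the label $\phi(\partial J|_{v_0})$ is either a subword of the cyclic word $(u_s)$ (when the free boundary of $J$ runs along $\sigma$) or a power $a^{\pm j}$ with $j\ge1$ (when it runs along $\tau$). I would treat these two cases in turn.

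Case~1 (free boundary on $\sigma$). Here every vertex of $\partial J\setminus\{v_0\}$ has degree $2$ or $\ge4$. A maximal block of consecutive degree-$2$ boundary vertices all lie on the boundary of one face of $M$, so by Convention~\ref{convention}(2) and (3) a block of length $4n-2$ would exhibit inside $(u_s)$ a subword of $(u_{1/p}^{\pm n})$ that is a product of exactly $4n-1$ pieces; as $s\in I_1(1/p;n)\cup I_2(1/p;n)$, this is impossible by Corollary~\ref{cor:consecutive_vertices}. Hence $A\le(4n-3)(B+1)$, where $A$ and $B$ count the degree-$2$ and degree-$\ge4$ vertices of $\partial J\setminus\{v_0\}$. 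Feeding this into Proposition~\ref{prop:key}(2) applied to $J$ (with no holes) and distinguishing $d_J(v_0)=2$, $d_J(v_0)=3$, $d_J(v_0)\ge4$ yields, in each case, a lower bound on $A$ strictly larger than $(4n-3)(B+1)$, a contradiction. This is the same computation as in Lemma~\ref{lem:degree2_vertices}.

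Case~2 (free boundary on $\tau$). The full boundary cycle of the disk $J$ then carries the label $a^{\pm j}$. Since $J$ is a disk diagram over the presentation $\langle a,b\svert u_{1/p}^n\rangle$ of $\Hecke(1/p;n)$, its boundary label represents the trivial element, so $a^{\pm j}=1$ in $\Hecke(1/p;n)$. But $\Hecke(1/p;n)$ can be realised as a Kleinian group in which $a$ corresponds to a parabolic transformation, so $a$ has infinite order and $a^{\pm j}\ne1$; this contradiction finishes the proof.

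The only real work lies in Case~1: verifying that a long block of degree-$2$ boundary vertices genuinely forces a subword of a relator of the kind excluded by Corollary~\ref{cor:consecutive_vertices}, and then pushing the three sub-cases on $d_J(v_0)$ through Proposition~\ref{prop:key}(2); both steps merely reproduce the bookkeeping of Lemma~\ref{lem:degree2_vertices}. Case~2 is immediate once one records that meridians have infinite order in $\Hecke(1/p;n)$.
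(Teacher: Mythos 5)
Your proof is correct. For the case where $\sigma$ fails to be simple you follow exactly the paper's route (the paper simply refers back to Lemma~\ref{lem:degree2_vertices}): the extremal disk cut off along $\sigma$ has only degree-$2$ and degree-$\ge 4$ vertices away from the cut vertex $v_0$, Corollary~\ref{cor:consecutive_vertices} together with Convention~\ref{convention}(3) caps the runs of consecutive degree-$2$ vertices at $4n-3$, and Proposition~\ref{prop:key}(2) then eliminates each of the three possibilities for $d_J(v_0)$. Where you genuinely diverge is the case where $\tau$ fails to be simple. The paper stays entirely inside the combinatorial framework: since $(u_{1/p}^n)$ is alternating while $(a^{\pm t})$ is not, an extremal disk $J$ with $\partial J\subset\tau$ has no degree-$2$ boundary vertex at all, so Proposition~\ref{prop:key}(2) yields $0=A\ge 4n$, an immediate contradiction. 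You instead note that the boundary label of $J$ is $a^{\pm j}$ with $j\ge 1$, which by van Kampen's lemma is trivial in $\Hecke(1/p;n)$, contradicting the fact that $a$ has infinite order; your justification via the Kleinian-group realization is legitimate (it is exactly the identification cited in Section~\ref{statements}), and an even more elementary justification is that $a$ already has infinite order in the abelianization of $\langle a,b\svert u_{1/p}^n\rangle$. Both routes are valid; the paper's keeps the whole argument self-contained within the small-cancellation machinery already set up, whereas yours is shorter but imports an external fact about the group --- a cost that disappears if you use the abelianization instead.
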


\begin{proof}
If $\sigma$ is not simple, then the same proof of Lemma~\ref{lem:degree2_vertices}
yields a contradiction.
So suppose that $\tau$ is not simple.
Then there is an extremal disk, say $J$, such that
$J$ is properly contained in $M$ with $\partial J \subset \tau$
and connected to the rest of $M$ by a single vertex.
Then $J$ is a connected and simply-connected $[4,4n]$-map.
Since $(u_{1/p}^n)$ is alternating but $(\phi(\delta)) \equiv (a^{\pm t})$ is not,
there is no vertex in $\partial J$ with $d_J(v)=2$.
But this is a contradiction to Proposition~\ref{prop:key}(2) applied to $J$.
\end{proof}

\begin{lemma}
\label{lem:no_edge}
There is no edge contained in $\sigma \cap \tau$.
\end{lemma}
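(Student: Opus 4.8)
The plan is to argue by contradiction, reproducing the extremal-disk argument that proves Theorem~\ref{thm:annular_structure}(1) in the case $\sigma\cap\tau\neq\emptyset$ (``Case~2'' there), and to exploit the single structural difference between the peripheral and the conjugacy settings: the inner boundary label is now $\phi(\delta)\equiv a^{\pm t}$, which is \emph{not} cyclically alternating, whereas every piece, being a subword of a cyclic permutation of $u_{1/p}^{\pm n}$, \emph{is} alternating.

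First I would record the two consequences of this non-alternation that drive the argument. (a) Every edge $e$ of $\tau$ has $\phi(e)=a^{\pm1}$: indeed $\phi(e)$ is a piece by Convention~\ref{convention}(2), hence alternating, and it is also a subword of $a^{\pm t}$, hence a power of $a$, so it has length $1$. (b) No vertex of $\tau$ that does not lie on $\sigma$ has degree $2$: such a vertex would force its two incident $\tau$-edges to lie on the boundary of a single face $D$, so $\phi(\partial D)$, a cyclic permutation of $u_{1/p}^{\pm n}$, would contain the subword $a^{\pm2}$, which is absurd. (This is exactly the mechanism already used in the proof of Lemma~\ref{lem:simple_boundary}.)

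Now suppose, for contradiction, that $\sigma\cap\tau$ contains an edge. As in the proof of Theorem~\ref{thm:annular_structure}(1), Case~2 (cf.\ Figure~\ref{fig.island}), I extract a submap $J$ of $M$ that is a connected, simply connected $[4,4n]$-map, bounded by a simple closed path $\sigma_1\tau_1$ with $\sigma_1\subseteq\sigma$ and $\tau_1\subseteq\tau$, and joined to the rest of $M$ at two vertices $v_1,v_2$, where $v_1$ is an endpoint of an edge of $\sigma\cap\tau$. Since $s\in I_1(1/p;n)\cup I_2(1/p;n)$, Corollary~\ref{cor:consecutive_vertices} together with Convention~\ref{convention}(3) forbids $4n-2$ consecutive degree-$2$ vertices on $\sigma_1\setminus\{v_1,v_2\}$, while by (b) there are no degree-$2$ vertices on $\tau_1\setminus\{v_1,v_2\}$ at all, so every interior vertex of $\tau_1$ has degree $\ge 3$. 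If $\tau_1$ has at least one interior vertex, then substituting these facts into Proposition~\ref{prop:key}(2) applied to $J$ already yields a contradiction: the number of degree-$2$ boundary vertices of $J$ is bounded above by $4n-3$ times (roughly) one plus the number of degree-$\ge4$ vertices on $\sigma_1$, whereas Proposition~\ref{prop:key}(2) forces it to exceed that quantity once $\tau_1$ contributes a vertex of degree $\ge 3$. If instead $\tau_1$ is a single edge, then $J$ is bounded by $\sigma_1$ together with one edge labelled $a^{\pm1}$, and the same short case analysis on $(d_J(v_1),d_J(v_2))$ as in Case~2 of the proof of Theorem~\ref{thm:annular_structure}(1) — now with no $\tau_1$-contribution to the count of degree-$2$ vertices — gives a contradiction in every case. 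Hence $\sigma\cap\tau$ contains no edge.

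The step I expect to need the most care is the bookkeeping of low-degree vertices on $\tau_1$ and at $v_1,v_2$. In Theorem~\ref{thm:annular_structure} the absence of degree-$1$ and degree-$3$ vertices on $\partial M$ was inherited from the standing hypothesis that \emph{both} boundary labels are powers of $u_r$; on the inner side that hypothesis is no longer available. I would therefore either verify directly that $\partial M$ has no such vertex in the present situation, or — more robustly — carry the degree-$3$ count $C$ through Proposition~\ref{prop:key}(2) explicitly. In the latter case I expect the inequalities to close with room to spare, precisely because the complete removal of all degree-$2$ vertices from $\tau_1$ liberates a surplus of order $4n-3$ per $\tau_1$-vertex, which dominates the $(4n-4)\lceil C/2\rceil$ term for $n\ge 2$.
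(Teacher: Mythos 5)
Your proposal is correct and follows essentially the same route as the paper: extract the submap $J$ bounded by $\sigma_1\tau_1$, use the non-alternation of $(a^{\pm t})$ against the alternation of $(u_{1/p}^{n})$ to exclude degree-$2$ vertices from $\tau_1$, bound runs of degree-$2$ vertices on $\sigma_1$ via Corollary~\ref{cor:consecutive_vertices} and Convention~\ref{convention}(3), and close with Proposition~\ref{prop:key}(2) and a case analysis on $d_J(v_1),d_J(v_2)$. One remark on your ``bookkeeping'' caveat: for degree-$3$ vertices on $\sigma_1$ you must take your first option (verify directly that they do not occur, which follows since both $(u_{1/p}^n)$ and $(u_s)$ are alternating, as the paper does), because such vertices would also break runs of degree-$2$ vertices and weaken the upper bound $A\le(4n-3)(B_1+1)$ faster than the $(4n-4)\lceil C/2\rceil$ term compensates; carrying them through $C$ alone suffices only on $\tau_1$.
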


\begin{proof}
Suppose on the contrary that $\sigma \cap \tau$ contains an edge.
As illustrated in Figure~\ref{fig.island} in Section~\ref{sec:small_cancellation_theory},
there is a submap $J$ of $M$ such that
\begin{enumerate}[\indent \rm (i)]
\item $J$ is bounded by a simple
closed path of the form $\sigma_1 \tau_1$, where $\sigma_1 \subseteq \sigma$
and $\tau_1 \subseteq \tau$;

\item $J$ is connected to the rest of $M$ by two distinct vertices, say $v_1$ and $v_2$,
where $\sigma_1 \cap \tau_1=\{v_1, v_2 \}$
and $v_1$ is an endpoint of an edge contained in $\sigma \cap \tau$.
Note that $d_J(v_1)=d_M(v_1)-1 \ge 2$ and $d_J(v_2)\ge 2$.
\end{enumerate}
Then $J$ is a connected and simply connected $[4,4n]$-map.
Since $(u_{1/p}^n)$ is alternating but $(\phi(\delta)) \equiv (a^{\pm t})$ is not,
there is no vertex in $\tau_1 -\{v_1,v_2\}$ with $d_J(v)=2$.
Also, since both $(u_{1/p}^n)$ and $(\phi(\alpha)) \equiv (u_s)$ are alternating,
there is no vertex in $\sigma_1 -\{v_1,v_2\}$ with $d_J(v)=3$.
So we put
\[
\begin{aligned}
A= &\ \text{\rm the number of vertices $v$ in $\sigma_1 -\{v_1,v_2\}$ such that $d_J(v)=2$;}\\
B_1= &\ \text{\rm the number of vertices $v$ in $\sigma_1-\{v_1,v_2\}$ such that $d_J(v) \ge 4$;}\\
B_2= &\ \text{\rm the number of vertices $v$ in $\tau_1-\{v_1,v_2\}$ such that $d_J(v) \ge 4$;}\\
C= &\ \text{\rm the number of vertices $v$ in $\tau_1 -\{v_1,v_2\}$ such that $d_J(v)=3$.}
\end{aligned}
\]
Since $s \in I_1(1/p;n)\cup I_2(1/p;n)$,
we see by Corollary~\ref{cor:consecutive_vertices}
together with Convention~\ref{convention}(3) that
$4n-2$ degree $2$ vertices cannot occur consecutively on $\sigma_1-\{v_1, v_2\}$,
so that $A \le (4n-3)(B_1+1)$.
We will derive a contradiction to this inequality
using Proposition~\ref{prop:key}(2) applied to $J$.

First if $d_J(v_1)=2$ and $d_J(v_2)=2$, then
\[
\begin{aligned}
A+2 & \ge (4n-3)(B_1+B_2)+(4n-4)\lceil C/2 \rceil+4n \\
& \ge (4n-3)B_1+4n =(4n-3)(B_1+1)+3,
\end{aligned}
\]
so that $A \ge (4n-3)(B_1+1)+1$,
contrary to $A \le (4n-3)(B_1+1)$.
Second either if $d_J(v_1)=2$ and $d_J(v_2)=3$
or if $d_J(v_1)=3$ and $d_J(v_2)=2$,
then
\[
\begin{aligned}
A+1 &\ge (4n-3)(B_1+B_2)+(4n-4)\lceil (C+1)/2 \rceil+4n \\
& \ge (4n-3)B_1+4n =(4n-3)(B_1+1)+3,
\end{aligned}
\]
so that $A \ge (4n-3)(B_1+1)+2$,
contrary to $A \le (4n-3)(B_1+1)$.
Third either if $d_J(v_1)=2$ and $d_J(v_2) \ge 4$
or if $d_J(v_1) \ge 4$ and $d_J(v_2)=2$,
then
\[
\begin{aligned}
A+1 &\ge (4n-3)(B_1+B_2+1)+(4n-4)\lceil C/2 \rceil+4n \\
& \ge (4n-3)(B_1+1)+4n,
\end{aligned}
\]
so that $A \ge (4n-3)(B_1+1)+4n-1$,
contrary to $A \le (4n-3)(B_1+1)$.
Fourth if $d_J(v_1)=3$ and $d_J(v_2)=3$, then
\[
\begin{aligned}
A & \ge (4n-3)(B_1+B_2)+(4n-4)\lceil (C+2)/2 \rceil+4n \\
& \ge (4n-3)B_1+4n =(4n-3)(B_1+1)+3,
\end{aligned}
\]
contrary to $A \le (4n-3)(B_1+1)$.
Fifth either if $d_J(v_1)=3$ and $d_J(v_2) \ge 4$
or if $d_J(v_1) \ge 4$ and $d_J(v_2)=3$, then
\[
\begin{aligned}
A & \ge (4n-3)(B_1+B_2+1)+(4n-4)\lceil (C+1)/2 \rceil+4n \\
& =(4n-3)(B_1+1)+4n,
\end{aligned}
\]
contrary to $A \le (4n-3)(B_1+1)$.
Finally if $d_J(v_1)\ge 4$ and $d_J(v_2) \ge 4$, then
\[
\begin{aligned}
A &\ge (4n-3)(B_1+B_2+2)+(4n-4)\lceil C/2 \rceil+4n \\
& =(4n-3)(B_1+2)+4n,
\end{aligned}
\]
contrary to $A \le (4n-3)(B_1+1)$.
\end{proof}

At this point, we newly let $A$, $B$ and $C$ be the numbers defined for $M$
as in Proposition~\ref{prop:key}. Then by Proposition~\ref{prop:key}(2) applied to $M$,
we have $A \ge (4n-3)B+(4n-4)\lceil C/2 \rceil$.
Furthermore, we newly put
\[
\begin{aligned}
A_1= &\ \text{\rm the number of vertices $v$ in $\sigma$ such that $d_M(v)=2$;} \\
B_1= &\ \text{\rm the number of vertices $v$ in $\sigma$ such that $d_M(v) \ge 4$.}
\end{aligned}
\]

\medskip
\noindent {\bf Claim.} {\it $A_1 > (4n-3)B_1$.}

\begin{proof}[Proof of Claim]
First suppose that $\sigma \cap \tau = \emptyset$.
Since $(u_{1/p}^n)$ is alternating but $(\phi(\delta)) \equiv (a^{\pm t})$ is not,
there is no vertex in $\tau$ with degree $2$,
and therefore $A=A_1$.
Clearly there is at least one vertex in $\tau$, say $v$.
Since $d_M(v) \ge 3$,
we have $(4n-3)B+(4n-4)\lceil C/2 \rceil > (4n-3)B_1$.
Here, since $A=A_1$, we have
$A_1 > (4n-3)B_1$, as desired.

Next suppose that $\sigma \cap \tau \neq \emptyset$.
By Lemma~\ref{lem:no_edge}, $\sigma \cap \tau$ consists of
finitely many vertices in $M$.
Then for every $v \in \sigma \cap \tau$, clearly $d_M(v) \ge 4$.
Furthermore, since $(\phi(\alpha)) \equiv (u_s)$ is
alternating while $(\phi(\delta)) \equiv (a^{\pm t})$ is not,
we see that $d_M(v)$ is an odd number.
Since $d_M(v) \ge 4$, this implies $d_M(v) \ge 5$
and therefore $M$ does not satisfy the condition in Remark~\ref{rem:big_degree_vertex}(2).
Hence $A > (4n-3)B+(4n-4)\lceil C/2 \rceil$ by the remark.
Clearly $(4n-3)B+(4n-4)\lceil C/2 \rceil \ge (4n-3)B_1$.
Here, since $A=A_1$ by reasoning as above,
we have $A_1 > (4n-3)B_1$, as desired.
\end{proof}

The above claim implies that $\sigma$ contains
$4n-2$ consecutive degree $2$ vertices.
But then the cyclic word $(\phi(\alpha)) \equiv (u_s)$ contains a subword
$w$ of $(u_{1/p}^{\pm n})$ which is a product of $4n-1$ pieces
but is not a product of less than $4n-1$ pieces.
This contradiction to Corollary~\ref{cor:consecutive_vertices}
completes the proof of Main Theorem~\ref{thm:conjugacy}(2) for the case $r=1/p$.
\qed

\bibstyle{plain}

\bigskip

\end{document}